\theoremstyle{thmit} 
\newtheorem{thm}{Theorem}[section]
\newtheorem{lem}[thm]{Lemma}
\newtheorem{cor}[thm]{Corollary}
\newtheorem{prop}[thm]{Proposition}
\newtheorem{question}{Question}
\newtheorem{deff}[thm]{Definition}
\newcommand{\Z}{\mathbb{Z}}
\newcommand{\f}{\mathbb{F}}
\newcommand{\A}{{\cal A}}
\newcommand{\Oe}{\widehat{OE}}
\newcommand{\C}{{\cal C}}
\newcommand{\F}{{\cal F}}
\newcommand{\G}{{\cal G}} 
\newcommand{\W}{{\cal W}}
\newcommand{\K}{{\cal K}}
\newtheorem{exa}[thm]{Example}
\newtheorem{rem}[thm]{Remark}
\title{Profinite genus of free products with finite amalgamation.}
\author{Vagner R. de Bessa, \ Anderson L. P. Porto and \ Pavel A. Zalesskii}
\begin{document}

\maketitle

\begin{abstract} 
A finitely generated residually finite group $G$ is an $\Oe$-group if any action of its profinite completion $\widehat G$  on a profinite tree with finite edge stabilizers admits a global fixed point. In this paper, we study the profinite genus of  free products $G_1*_HG_2$ of $\Oe$-groups $G_1,G_2$ with finite amalgamation $H$. Given such $G_1,G_2,H$ we give precise formulas for the number of isomorphism classes of $G_1*_HG_2$ and of its profinite completion. We compute the genus of $G_1*_HG_2$ and list various situations when the formula for the genus simplifies.
\end{abstract}


\section{Introduction}

There has been much recent study of whether residually finite groups, or classes of residually finite groups of combinatorial nature may be distinguished from each other by their sets of finite quotient groups.

In group theory the study in this direction started in 70-th of the last century when Baumslag \cite{BAU74},  Stebe \cite{Ste72} and others found examples of finitely generated residually finite groups having the same set of finite quotients. The general question  addressed in this study can be formulated as follows: 
		
		\begin{question}\label{question}
			To what extent a finitely generated residually finite group $\Gamma$ is  determined by its finite quotients?
		\end{question} 
		
		The study leaded to the notion of genus $\mathfrak{g}(G)$ of a residually finite group $G$, the set of isomorphism classes of finitely generated residually finite groups having the same set of finite quotients as $G$. Equivalently, $\mathfrak{g}(G)$ is the set of isomorphism classes of finitely generated residually finite groups having the  profinite completion isomorphic to the profinite completion $\widehat G$ of $G$. 
		
		The study mostly was concentrated to establish whether the cardinality $g(G)$ of  the  genus $\mathfrak{g}(G)$ is finite or  1 (see  \cite{GPS, GS, BCR16, BMRS18, BMRS20, J, M, Wil17} for example) (we use the same term {\it genus} for $g(G)$ from now on). There are only few papers where exact numbers or estimates of the genus appear due to the difficulties of such calculation (see \cite{GZ, BZ, Ner19, Ner20}).
However, the following 		
		 principle question of Remeslennikov is still open.
		
		\begin{question} (V.N. Remeslennikov)
		Is the genus $g(F)$ of a free group $F$ of finite rank equals 1?
		
		\end{question}

In \cite[Section 4]{GZ}  were given examples of non isomorphic free products with finite amalgamation whose profinite completion are isomorphic. This gives a motivation to investigate the profinite genus of such amalgamated free products and this is exactly the subject of our paper.

However, since Remeslennikov's question is not answered, it is not clear whether the profinite completion of an one-ended group does not split as a profinite amalgamated free product or an HNN-extension over a finite group. This naturally gives restriction to the family in which we make our considerations.

A finitely generated residually finite group $G$ with less than 2 ends, i.e. a finite or one ended group, will be called an $OE$-group in the paper.   It follows from the famous Stallings theorem that $G$ is an $OE$-group  if and only if whenever it acts on a tree with finite edge stabilizers
it  has a global fixed point. A finitely generated profinite group will be called $OE$-group if it has the same property:   whenever it acts   on a  profinite tree  with finite edge stabilizers, then it fixes a vertex.  Note that  finite and one-ended groups acting on a tree have a global fixed point and we need to extend this property to the profinite completion.  A finitely generated residually finite group $G$ will be called $\widehat{OE}$-group if  $\widehat G$ can not act on a profinite tree with finite edge stabilizers without a global fixed point (see Definition \ref{Profinite tree} for the definition of a profinite tree).
  Note that the class of $\widehat{OE}$-groups is quite large. It contains all finitely generated residually finite soluble groups, Fuchsian groups, indecomposable (into free products) 3-manifold groups, indecomposable RAAGs,  as well all arithmetic groups of rank $>1$. 
 
In this paper we study the genus of free products  of $\widehat{OE}$-groups with finite amalgamation. We divide the task into two parts depending on the point of view for free product with amalgamation.

One point of view is given two $\Oe$-groups $G_1,G_2$ and isomorphic finite subgroups $H_1\cong H_2$ of $G_1$ and $G_2$ respectively, we consider the family $\W$ of free products $G_1*_{H_1=H_2} G_2$ with $H_1$ and $H_2$ two fixed amalgamated copies of a finite group $H$. Given $G\in \W$ we first concentrate in calculation of the genus $g(G,\W)$ within this family of amalgamated free products (see Theorems \ref{fixed subgroups}, \ref{fixed subgroups prof non sym} and \ref{case symmetrical genus}).
 
From another point of view let $\F$ be the family of all amalgamated free products $G_1*_H G_2$ such that the embeddings of $H$ in $G_i \,(i=1,2)$ are not fixed. Given $G\in \F$ we give the precise formula for the genus $g(G,\F)$ (see Theorem \ref{genus amalgam}).


We first investigate the isomorphism problems for amalgamated free products and profinite amalgamated free products that are of independent interest and in fact calculate the number of isomorpism classes of such amalgamated free products.

We denote by  $Aut_{G_i}(H)$  the subgroup of automorphisms of $G_i$ that leave $H$ invariant and $\widetilde{Aut}_{G_i}(H)$ for the image of it in the group of outer automorphisms $Out(H)$ of $H$ (warning: the latter group depends on the embedding of $H$ in $G_i$).  

\begin{thm} \label{fixed images out intr} Let $G_1\not\cong G_2$  be $OE$-groups having fixed isomorphic copies $H_1$ and $H_2$ of a finite common  subgroup $H$, respectively.  Then the number of isomorphism classes of  amalgamated free products  $G_1*_{H_1=H_2}G_2$ is $$ |\widetilde{Aut}_{G_2}(H)\backslash Out(H)/ \widetilde{Aut}_{G_1}(H)|.$$ \end{thm}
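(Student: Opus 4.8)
The plan is to parametrize all the amalgamated free products in question by automorphisms of $H$ and then to determine exactly when two of them are abstractly isomorphic. Fix isomorphisms $\alpha_i\colon H\to H_i\le G_i$ for $i=1,2$. Since the only freedom in forming $G_1*_{H_1=H_2}G_2$ is the choice of the identifying isomorphism $H_1\to H_2$, and every such isomorphism has the form $\alpha_2\circ\theta\circ\alpha_1^{-1}$ for a unique $\theta\in Aut(H)$, the family is exactly $\{G(\theta):\theta\in Aut(H)\}$, where $G(\theta)$ glues $\alpha_1(h)$ to $\alpha_2(\theta(h))$. Writing $\widetilde A_i\le Aut(H)$ for the image of $Aut_{G_i}(H)$ under restriction to $H_i$ (via $\alpha_i$), I would first record that conjugation in $G_i$ by an element of $H_i$ shows $Inn(H)\subseteq\widetilde A_1\cap\widetilde A_2$, so that $\widetilde A_i/Inn(H)=\widetilde{Aut}_{G_i}(H)$ inside $Out(H)$.

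Second, I would prove the easy implication: if $\theta'=\bar\beta\circ\theta\circ\bar\gamma^{-1}$ with $\bar\gamma\in\widetilde A_1$ and $\bar\beta\in\widetilde A_2$, then $G(\theta)\cong G(\theta')$. Lifting $\bar\gamma,\bar\beta$ to $\gamma\in Aut_{G_1}(H)$ and $\beta\in Aut_{G_2}(H)$, the pair $(\gamma,\beta)$ defines a homomorphism $G(\theta)\to G(\theta')$ acting as $\gamma$ on $G_1$ and as $\beta$ on $G_2$; a one-line check on the amalgamated relation (using $\theta'\bar\gamma=\bar\beta\theta$) shows it is well defined, and its inverse is built symmetrically, so it is an isomorphism. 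Combined with the first paragraph this shows that the isomorphism type of $G(\theta)$ depends only on the double coset $\widetilde{Aut}_{G_2}(H)\,[\theta]\,\widetilde{Aut}_{G_1}(H)$ in $Out(H)$.

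The substance of the proof is the converse: that $G(\theta)\cong G(\theta')$ forces $[\theta]$ and $[\theta']$ into the same double coset. Given an isomorphism $\Phi\colon G(\theta)\to G(\theta')$, I would make it act on the Bass--Serre tree $T'$ of $G(\theta')$. Because $G_1,G_2$ are $OE$-groups and the edge stabilisers of $T'$ are finite, the $OE$ property (that is, Stallings' theorem in this setting) forces $\Phi(G_1)$ and $\Phi(G_2)$ to fix vertices of $T'$, hence to lie in conjugates of the factors of $G(\theta')$. A maximality argument, together with the hypothesis $G_1\not\cong G_2$ to rule out interchanging the factors, shows that after composing $\Phi$ with a suitable inner automorphism of $G(\theta')$ one has $\Phi(G_1)=G_1$ and $\Phi(G_2)=G_2$, with $\Phi$ carrying the edge group of $G(\theta)$ onto that of $G(\theta')$. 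Restricting $\Phi$ to the factors then yields $\gamma\in Aut_{G_1}(H)$ and $\beta\in Aut_{G_2}(H)$, and reading off the amalgamated relation gives $\theta'=\bar\beta\circ\theta\circ\bar\gamma^{-1}$ up to an inner automorphism of $H$; passing to $Out(H)$ places $[\theta']$ in $\widetilde{Aut}_{G_2}(H)\,[\theta]\,\widetilde{Aut}_{G_1}(H)$, as required.

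Combining the two implications gives a bijection between the isomorphism classes of the groups $G(\theta)$ and the double cosets $\widetilde{Aut}_{G_2}(H)\backslash Out(H)/\widetilde{Aut}_{G_1}(H)$, whence the stated count. I expect the main obstacle to be the converse direction, specifically upgrading ``factors preserved up to conjugacy'' to an honest normalisation $\Phi(G_1)=G_1$, $\Phi(G_2)=G_2$ that simultaneously matches the two finite edge groups, and handling the low-index degeneracies (for instance $[G_i:H]=2$) where the amalgamated decomposition is not rigid in the naive sense. The non-isomorphism of the factors is exactly what prevents them from being swapped and what keeps the two sides of the double coset distinct.
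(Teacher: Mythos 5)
Your proposal is correct and follows essentially the same route as the paper: normalize an arbitrary isomorphism so that it carries $G_1$ onto $G_1$ and $G_2$ onto $G_2$ (the paper quotes this normalization as Proposition \ref{2a}, citing \cite{BPZ}, rather than re-deriving it from the action on the Bass--Serre tree as you sketch), read off the identifying relation to conclude that the isomorphism type is the double coset of $\theta$ under $\overline{Aut}_{G_2}(H)$ and $\overline{Aut}_{G_1}(H)$ in $Aut(H)$, and finally factor out $Inn(H)$ --- which lies in both of these subgroups --- to pass to $Out(H)$. The observation you flag as the main obstacle (upgrading ``factors preserved up to conjugacy'' to an exact normalization, using $G_1\not\cong G_2$ to forbid swapping) is precisely the content of the cited proposition, so your inlined argument and the paper's citation cover the same step.
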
  


We continue to use tilde for the natural image in $Out(H)$. 
 Then using the standard notation $N_G(H)$ for the normalizer of $H$ in $G$, we can state the following  theorem establishing the formula for the genus.

\begin{thm}\label{fixed subgroups int} Let $G=G_1*_{H}G_2$ be a free product of $\widehat{OE}$-groups with finite amalgamation such that $\widehat G_1\not\cong \widehat G_2$.  Then the genus  
  $g(G,\W)$ equals
$$|\widetilde{Aut}_{G_2}(H)\backslash \widetilde{Aut}_{\widehat G_2}(H)\widetilde N^+\widetilde{Aut}_{\widehat G_1}(H)/\widetilde{Aut}_{G_1}(H)|,$$ where $N^+=\{n\in N_{\widehat G}(H)\mid \overline{\langle G_1, n^{-1} G_2 n\rangle}=\widehat G\}$.
\end{thm}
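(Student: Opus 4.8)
The plan is to exhibit $g(G,\W)$ as the number of abstract isomorphism classes in $\W$ lying in the fibre, over the isomorphism class of $\widehat G$, of the map $G^\phi\mapsto\widehat{G^\phi}$ that sends an amalgam to its profinite completion. Write $G^\phi=G_1*_\phi G_2$ for the member of $\W$ obtained by gluing the two fixed copies $H_1,H_2$ of $H$ through an isomorphism induced by $\phi\in Aut(H)$. The first step is the standard observation that, since $H$ is finite and $G_1,G_2$ are residually finite, the amalgam $G^\phi$ is residually finite and its profinite completion is the profinite amalgamated free product $\widehat{G^\phi}=\widehat G_1\amalg_\phi\widehat G_2$, with $\widehat G_1,\widehat G_2$ embedded and meeting in $H$. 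Theorem \ref{fixed images out intr} then identifies the set of isomorphism classes in $\W$ with the double coset space $\widetilde{Aut}_{G_2}(H)\backslash Out(H)/\widetilde{Aut}_{G_1}(H)$, the class of $G^\phi$ corresponding to the double coset of $\widetilde\phi$, so that the genus is the number of these abstract double cosets whose completion is isomorphic to $\widehat G$.

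The heart of the argument is the profinite classification: deciding, for $\phi,\psi\in Out(H)$, when $\widehat G_1\amalg_\phi\widehat G_2\cong\widehat G_1\amalg_\psi\widehat G_2$. Here I would pass to the action of each completion on its standard profinite tree, whose edge stabilisers are the finite group $H$ and whose vertex stabilisers are conjugates of $\widehat G_1$ and $\widehat G_2$. An isomorphism $\Theta$ between two such amalgams transports one standard tree into an action of the other group on a profinite tree with finite edge stabilisers; because $\widehat G_1,\widehat G_2$ are $\widehat{OE}$-groups they fix vertices, so by uniqueness of the profinite decomposition $\Theta$ carries vertex stabilisers to conjugates of vertex stabilisers, and since $\widehat G_1\not\cong\widehat G_2$ it cannot interchange the two types. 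After adjusting $\Theta$ by an inner automorphism we may assume $\Theta(\widehat G_1)=\widehat G_1$ and $\Theta(H)=H$, whence $\Theta(\widehat G_2)=n^{-1}\widehat G_2n$ for some $n\in N_{\widehat G}(H)$, and fullness of the decomposition forces $\overline{\langle G_1,n^{-1}G_2n\rangle}=\widehat G$, i.e.\ $n\in N^+$. Tracking the effect on $H$ of $\Theta|_{\widehat G_1}$, of $\Theta|_{\widehat G_2}$ and of the conjugation by $n$ then shows that $\psi$ and $\phi$ differ by an element of $\widetilde{Aut}_{\widehat G_2}(H)\,\widetilde N^+\,\widetilde{Aut}_{\widehat G_1}(H)$, and conversely that every such element is realised by an isomorphism; this is precisely the profinite companion Theorem \ref{fixed subgroups prof non sym}, which I invoke with the base point $G=G^{1}$.

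Combining the two classifications, $\widehat{G^\phi}\cong\widehat G$ holds exactly when $\widetilde\phi$ lies in the set $S:=\widetilde{Aut}_{\widehat G_2}(H)\,\widetilde N^+\,\widetilde{Aut}_{\widehat G_1}(H)$. Since every automorphism in $Aut_{G_i}(H)$ extends to one in $Aut_{\widehat G_i}(H)$, we have $\widetilde{Aut}_{G_i}(H)\subseteq\widetilde{Aut}_{\widehat G_i}(H)$, so $S$ is a union of $\widetilde{Aut}_{G_2}(H)$–$\widetilde{Aut}_{G_1}(H)$ double cosets. The genus $g(G,\W)$ is therefore the number of abstract double cosets meeting the profinite fibre $S$, namely $|\widetilde{Aut}_{G_2}(H)\backslash S/\widetilde{Aut}_{G_1}(H)|$, which is the asserted formula.

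I expect the main obstacle to be the second paragraph: establishing the profinite rigidity of the decomposition and pinning down exactly the normalizer twist $\widetilde N^+$. The delicate points are the uniqueness of the profinite amalgam decomposition, for which the $\widehat{OE}$ hypothesis on the factors, through their action on profinite trees, is essential, and the verification that the constraint $\overline{\langle G_1,n^{-1}G_2n\rangle}=\widehat G$ characterises precisely those conjugating elements $n$ yielding a genuine equivalent decomposition, with no spurious identifications. Everything else is the bookkeeping of matching the abstract double coset count of Theorem \ref{fixed images out intr} against this profinite fibre.
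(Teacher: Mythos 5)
Your proposal is correct and follows essentially the same route as the paper: the abstract double-coset classification of members of $\W$ (Theorem \ref{fixed images out}), the profinite classification identifying the fibre as $\widetilde{Aut}_{\widehat G_2}(H)\widetilde N^+\widetilde{Aut}_{\widehat G_1}(H)$ (which in the paper is Proposition \ref{genero}(i), resting on Theorem \ref{1} and the tree-rigidity Proposition \ref{compart1}, not Theorem \ref{fixed subgroups prof non sym} as you cite), and then counting the double cosets lying in that fibre via the restricted map $\kappa$. Your second paragraph reconstructs the same profinite Bass--Serre argument the paper delegates to Section 5 and to \cite{BPZ}, so the only discrepancy is the mislabelled reference, not the mathematics.
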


In particular we establish the list of situations when the formula for the genus is simpler.

\begin{thm}\label{genus 1} Let $G_1,G_2$ be $\widehat{OE}$-groups  and $H$ a finite common subgroup of $G_i$ such that $\widehat G_1\not\cong \widehat G_2$. Let $G=G_1*_HG_2$  be an   amalgamated free product. Then  $g(G,\W)$ equals
$$|\widetilde{Aut}_{G_2}(H)\backslash \widetilde{Aut}_{\widehat G_2}(H)\widetilde{Aut}_{\widehat G_1}(H)/\widetilde{Aut}_{G_1}(H)|,$$ if   one of the following conditions is satisfied:
\begin{itemize}
\item[(i)] $H$ is central in $G_i$ for $i=1$ or $2$;
\item[(ii)] $H$ is a direct factor of $N_{\widehat{G}_i}(H)$ for $i=1$ or $2$;
\item[(iii)]  $Out(H)$ is abelian (in particular if $H$ is cyclic; also the case for majority of simple groups);
\item[(iv)] $H$ is self-normalized in $\widehat{G}_i$;
\item[(v)] $H$ is a retract of $G_i$.

\end{itemize}
   
   \end{thm}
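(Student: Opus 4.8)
The plan is to reduce everything to Theorem \ref{fixed subgroups int} and then to show that, under each of (i)--(v), the factor $\widetilde N^+$ may be deleted from the triple product. Write $Y=\widetilde{Aut}_{\widehat G_2}(H)\widetilde{Aut}_{\widehat G_1}(H)$ and $X=\widetilde{Aut}_{\widehat G_2}(H)\widetilde N^+\widetilde{Aut}_{\widehat G_1}(H)$, both regarded as subsets of $Out(H)$. Since $\langle G_1,G_2\rangle=G$ is dense in $\widehat G$, the identity lies in $N^+$, so $Y\subseteq X$. For the reverse inclusion I would use that $\widetilde{Aut}_{\widehat G_1}(H)$ and $\widetilde{Aut}_{\widehat G_2}(H)$ are \emph{subgroups} of $Out(H)$, being homomorphic images of $Aut_{\widehat G_i}(H)$ under restriction to $H$ followed by projection to $Out(H)$. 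Hence it suffices to prove the single containment $\widetilde N^+\subseteq Y$, from which $X\subseteq\widetilde{Aut}_{\widehat G_2}(H)\,Y\,\widetilde{Aut}_{\widehat G_1}(H)=Y$ follows at once. This is the whole point: in each of the five cases one must only locate $\widetilde N^+$ inside $Y$.

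Because $N^+\subseteq N_{\widehat G}(H)$, it is enough to control the image $\widetilde{N_{\widehat G}(H)}$ of the full normalizer in $Out(H)$. The key step, and the one I expect to be the main obstacle, is a structural description of this normalizer drawn from profinite Bass--Serre theory. Letting $\widehat G=\widehat G_1\amalg_H\widehat G_2$ act on its standard profinite tree $T$, with an edge $e$ stabilised by $H$ and endpoints $v_1,v_2$ stabilised by $\widehat G_1,\widehat G_2$, I would analyse the profinite fixed--point subtree $T^H$. Since $H$ is finite, every edge of $T^H$ has stabiliser exactly $H$, and the hypothesis $\widehat G_1\not\cong\widehat G_2$ forbids any element of $N_{\widehat G}(H)$ from inverting $e$ (an inversion would make $\widehat G_1,\widehat G_2$ conjugate). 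One then shows that $N_{\widehat G}(H)$ acts on $T^H$ with quotient a single edge and vertex stabilisers $N_{\widehat G_i}(H)=N_{\widehat G}(H)\cap\widehat G_i$, so that $N_{\widehat G}(H)=N_{\widehat G_1}(H)\amalg_H N_{\widehat G_2}(H)$. Passing to $Out(H)$, and noting that conjugation by $N_{\widehat G_i}(H)$ consists of inner automorphisms of $\widehat G_i$ preserving $H$, this yields $\widetilde{N_{\widehat G}(H)}=\langle\widetilde{N_{\widehat G_1}(H)},\widetilde{N_{\widehat G_2}(H)}\rangle$ with each $\widetilde{N_{\widehat G_i}(H)}\subseteq\widetilde{Aut}_{\widehat G_i}(H)$.

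It then remains to run the five cases against this description. In case (iii), $Out(H)$ is abelian, so the subgroup generated by the two images equals their product $\widetilde{N_{\widehat G_2}(H)}\,\widetilde{N_{\widehat G_1}(H)}\subseteq Y$. In each of the remaining cases I would show that one of the two images is trivial, whence $\widetilde{N_{\widehat G}(H)}$ collapses to the other image, which sits inside the corresponding $\widetilde{Aut}_{\widehat G_i}(H)\subseteq Y$. Concretely: in (i), $H$ is central in $G_i$, hence (as $H$ is finite and $G_i$ dense) central in $\widehat G_i$, so conjugation from $\widehat G_i$ is trivial on $H$; in (iv), $N_{\widehat G_i}(H)=H$ contributes only inner classes; in (ii), a direct complement to $H$ in $N_{\widehat G_i}(H)$ centralises $H$, so again only inner classes survive; and in (v), a retraction $G_i\to H$ extends to $\widehat G_i=\widehat K\rtimes H$, whence $N_{\widehat G_i}(H)=C_{\widehat K}(H)\rtimes H$ and conjugation is inner-or-trivial on $H$. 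In every instance $\widetilde N^+\subseteq\widetilde{N_{\widehat G}(H)}\subseteq Y$, and substituting $X=Y$ into Theorem \ref{fixed subgroups int} gives the stated formula.

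The genuine difficulty is concentrated in the second paragraph: making the normalizer computation rigorous in the profinite category, where one must ensure that $T^H$ is a closed, connected profinite subtree, that the generated subgroup $N_{\widehat G_1}(H)\amalg_H N_{\widehat G_2}(H)$ is closed and really exhausts $N_{\widehat G}(H)$, and that the absence of inversions genuinely follows from $\widehat G_1\not\cong\widehat G_2$. Once that description is secured, the case analysis is routine, each condition serving only to kill one of the two normalizer images in $Out(H)$ or (for (iii)) to force the two images to commute.
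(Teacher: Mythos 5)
Your proposal is correct and follows essentially the same route as the paper: it reduces to Theorem \ref{fixed subgroups int}, establishes the normalizer decomposition $N_{\widehat G}(H)=N_{\widehat G_1}(H)\amalg_H N_{\widehat G_2}(H)$ via the fixed-point subtree argument (this is exactly the paper's Proposition \ref{cp1}(ii), proved the same way), and then runs the same five-case analysis showing that in cases (i), (ii), (iv), (v) one of $\widetilde{N}_{\widehat G_i}(H)$ is trivial while in case (iii) abelianness of $Out(H)$ turns the generated subgroup into a product, so that $\widetilde N^+$ is absorbed into $\widetilde{Aut}_{\widehat G_2}(H)\widetilde{Aut}_{\widehat G_1}(H)$ (the paper's Corollary \ref{semidirect}). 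The only cosmetic difference is that you invoke $\widehat G_1\not\cong\widehat G_2$ to rule out edge inversions, whereas no inversion is possible in the standard tree of an amalgam anyway, since the two vertex orbits are distinct by construction.
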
 

If $G_1\cong G_2$ (resp. $\widehat G_1\cong \widehat G_2$)   we also prove the analogs of the theorems stated above; the formulas in this case are distorted by a possible automorphism that swaps $G_1$ and $G_2$ (resp. $\widehat G_1$ and  $\widehat G_2$). The hypothesis of $\widehat G_1\not\cong \widehat G_2$ in reality is a weaker condition of absence of an automorphism of $\widehat G$ that swaps $\widehat G_1$ and $\widehat G_2$. Situation when such automorphism exists is also considered.

To describe the genus in the family of push-outs $\F$ observe  that $Aut(G_i)$ acts naturally on the set $S_i$ of finite subgroups of $G_i$ and similarly  $Aut(\widehat G_i)$ acts naturally on the set of finite subgroups of $\widehat G_i$. Let $H$ be a common finite subgroup of $G_i\,(i=1,2).$ Denote by $Aut(G_i)H$ and $Aut(\widehat{G_i})H$ the orbits of $H$ with respect to these actions.



\begin{thm}\label{genus amalgam intr} Let $G_1, G_2$ be  $\Oe$-groups and $G=G_1 *_H G_2$ be a  free product with finite amalgamation. Then  the genus  $g(G,\mathcal{F})$ of $G$  equals to  $g(G,\F)=\displaystyle\sum_{\{H_1,H_2\}} g(G_1*_{H_1=H_2}G_2, \W)$, where $\{H_1,H_2\}$ ranges over unordered pairs of  representatives of $Aut(G_i)$-orbits of $S_i\cap Aut(\widehat G_i)H\,(i=1,2)$.

\end{thm}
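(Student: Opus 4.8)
The plan is to reduce the computation of $g(G,\F)$ to the already-established $\W$-genus by partitioning $\F$ according to the conjugacy data of the amalgamated subgroup inside each factor. First I would use that, since $G_1,G_2$ are $\Oe$-groups and $H$ is finite, every $G'=G_1*_{H'_1=H'_2}G_2\in\F$ is an amalgam over a finite edge group whose Bass--Serre decomposition as a free product of $G_1$ and $G_2$ amalgamating a finite subgroup is canonical. Hence, as in the isomorphism theorem underlying Theorem~\ref{fixed images out intr}, any abstract isomorphism between two members of $\F$ is induced by automorphisms $\alpha_i\in Aut(G_i)$ of the factors --- together, when $G_1\cong G_2$, with a possible interchange of the two factors --- that intertwine the gluing maps. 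Thus the isomorphism class of $G'$ is exactly the datum of the triple $(H'_1,H'_2,\phi)$, with $H'_i\le G_i$ the image of the edge group and $\phi\colon H'_1\to H'_2$ the gluing, taken modulo the action of $Aut(G_1)\times Aut(G_2)$ (and the factor swap).

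I would then cut this classification along the $Aut(G_i)$-orbits of the subgroups $H'_i$. Fixing representatives $H_1\le G_1$, $H_2\le G_2$ of a pair of orbits and transporting $(H'_1,H'_2)$ onto $(H_1,H_2)$ by suitable $\alpha_i$, the sole remaining freedom is in the gluing isomorphism $\phi\colon H_1\to H_2$; but enumerating the groups $G_1*_{H_1=H_2}G_2$ up to isomorphism as $\phi$ varies is precisely the $\W$-genus $g(G_1*_{H_1=H_2}G_2,\W)$ computed in Theorem~\ref{fixed subgroups int}. So the members of $\F$ whose edge group sits in the prescribed pair of orbits $\{H_1,H_2\}$ contribute exactly $g(G_1*_{H_1=H_2}G_2,\W)$ isomorphism classes, contributions for distinct unordered pairs being disjoint; the unordered bracket absorbs the symmetry produced by swapping the factors when $G_1\cong G_2$.

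The step determining which pairs $\{H_1,H_2\}$ genuinely occur is where the profinite hypothesis enters, and I expect it to be the main obstacle. A member $G'=G_1*_{H'_1=H'_2}G_2$ contributes to $g(G,\F)$ iff $\widehat{G'}\cong\widehat G$. Since $H$ is finite, $\widehat{G'}=\widehat{G_1}\amalg_{H}\widehat{G_2}$ is a profinite amalgam over a finite edge group, and the $\Oe$-hypothesis guarantees, through the profinite Bass--Serre machinery used in the earlier theorems, that this profinite decomposition is likewise canonical. Comparing $\widehat{G'}$ with $\widehat G$ then forces the image of $H'_i$ in $\widehat{G_i}$ to be taken onto a conjugate of $H$ by some automorphism of $\widehat{G_i}$, that is $H'_i\in S_i\cap Aut(\widehat G_i)H$; this is exactly the index set of the sum. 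The delicate points to verify are that passing to $\widehat{G_i}$ neither merges two distinct $Aut(G_i)$-orbits of admissible subgroups nor creates spurious ones, and that the internal $\W$-count --- which already builds in the profinite automorphism groups $\widetilde{Aut}_{\widehat G_i}(H)$ of the gluing --- is insensitive to the chosen orbit block. Granting this, summing $g(G_1*_{H_1=H_2}G_2,\W)$ over the admissible unordered pairs yields the stated formula.
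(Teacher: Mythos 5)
Your proposal is correct and is essentially the paper's own argument: the paper obtains this theorem (stated again as Theorem \ref{genus amalgam}) by precisely the reduction you describe --- Proposition \ref{2a} and the push-out formalism of Section \ref{section4} to split $\F$ into disjoint blocks indexed by unordered pairs of $Aut(G_i)$-orbits of edge subgroups, Proposition \ref{compart1} to see that a member can have completion $\widehat G$ only if its edge subgroups lie in $S_i\cap Aut(\widehat G_i)H$, and the Section \ref{secao6} formulas for the count inside each block. The ``delicate points'' you defer do go through (and are left just as implicit in the paper): disjointness of the blocks is immediate from Proposition \ref{2a}, and no admissible pair $\{H_1,H_2\}$ is spurious, since automorphisms $\beta_i\in Aut(\widehat G_i)$ carrying $H$ to $H_i$ transport $\widehat G\cong \widehat G_1\amalg_H\widehat G_2$ onto $\widehat G_1\amalg_{H_1=H_2}\widehat G_2$, which is the profinite completion of an abstract member of that block, so each summand $g(G_1*_{H_1=H_2}G_2,\W)$ is computed at a base point whose completion really is $\widehat G$.
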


Of course this formula gives finite number, only if the sets of  $Aut(G_i)$-orbits of $S_i\cap Aut(\widehat G_i)H\,(i=1,2)$ are finite and the genera of summands are finite. If    $G_i$  has finitely many conjugacy classes of finite subgroups then clearly these sets of orbits are finite. Arithmetic groups, mapping class groups, the automorphism group and the outer automorphism group of a free group of finite rank,  hyperbolic groups  and many groups of geometric nature share this property.  Note that arithmetic groups with congruence subgroup property have finite genus (see \cite{Aka}), so for such arithmetic groups $G_1,G_2$ the genus $g(G,\F)$ is finite.

Note that by \cite[Theorem 1.1]{BPZ} the above results can be used to compute  the genus  within a larger class $\A$ of accessible groups whose vertex groups of its JSJ-decomposition are $\widehat{OE}$-groups (see Remark \ref{class A}).


The structure of the  paper is as follows. Section 2 contains elements of the profinite version of the Bass-Serre theory used in the paper (see \cite{R} for more details) and some results on normalizers of amalgamated free (profinite) products.   In Section 3 we discuss properties of $\widehat{OE}$-groups. Sections 4 and 5 devoted to the isomorphism problem of amalgamated free  products and profinite  amalgamated free products which are of independent interest. We prove Theorems \ref{fixed subgroups int} and \ref{genus 1} in Section 6, where we give several examples of calculation of the genus of free products with finite 
 amalgamation.  Section 7 is dedicated to the proof of Theorem \ref{genus amalgam intr}, where many examples of  amalgamated free products $G=G_1*_H G_2$ with $g(G,\F)=1$ are given.


Our basic reference for notations and results about profinite groups is \cite{RZ}. We refer the reader to Lyndon-Schupp \cite{LS}, Magnus-Karrass-Solitar \cite{MKS}, Serre \cite{S} or Dicks-Dunwoody \cite{DI} for an account of basic facts on amalgamated free products  and to \cite{RZ} for the profinite versions of these constructions. Our methods based on the profinite version of the Bass-Serre theory of groups acting on trees that can be found in \cite{R}. All homomorphisms of profinite groups are assumed to be continuous in this paper. We will use the standard abbreviation $x^{g}$ for $g^{-1}xg$ when $g,x$ are elements of a group $G;$ the inner automorphism of $G$ corresponding to the conjugation $gxg^{-1}$ will be denoted by $\tau_g$ (as all maps are written on the left). The composition of  two maps $f$ and $g$ is often defined simply as $f\circ g = fg.$ For a subgroup $H$ of $G$ we shall denote by $Aut_G(H)$ the group of automorphisms that leave $H$ invariant and by $\overline{Aut}_G(H)$ its image in $Aut(H)$. All amalgamated free products $G=G_1*_HG_2$ (resp. profinite amalgamated free products $G=G_1\amalg_HG_2$) will be assumed non-fictitious in the paper, i.e. $G_1\neq H\neq G_2$.


\section{\large Preliminary Results}

In this section we recall the necessary notions of the Bass-Serre theory for abstract and profinite graphs.
	
	\begin{deff}[Profinite graph]
		A (profinite) graph is a (profinite space) set $\Gamma$ with a distinguished closed nonempty subset $V(\Gamma)$ called the vertex set, $E(\Gamma)=\Gamma-V(\Gamma)$ the edge set and two (continuous) maps $d_0,d_1:\Gamma \rightarrow V(\Gamma)$ whose restrictions to $V(\Gamma)$ are the identity map $id_{V(\Gamma)}$. We refer to $d_0$ and $d_1$ as the incidence maps of the (profinite) graph $\Gamma$.  
	\end{deff}
	
	A morphism $\alpha:\Gamma \longrightarrow \Delta$ of profinite graphs is a continuous map with $\alpha d_i=d_i \alpha$ for $i=0,1$. By \cite[Proposition 2.1.4]{R} every profinite graph $\Gamma$ is an inverse limit of finite quotient graphs of $\Gamma$. A profinite graph $\Gamma$ is called connected if every its finite quotient graph is connected as a usual graph. 
	
	\begin{deff}\label{Profinite tree} Let $\Gamma$ be a profinite graph. Define $E^{*}(\Gamma)=\Gamma/V(\Gamma)$ to be the quotient space of $\Gamma$ (viewed as a profinite space) modulo the subspace of vertices $V(\Gamma)$. Consider the free profinite $\widehat\Z$-modules $[[\widehat\Z(E^{*}(\Gamma),*)]]$ and $[[\widehat\Z V(\Gamma)]]$ on the pointed profinite space $(E^{*}(\Gamma),*)$ and on the profinite space $V(\Gamma)$, respectively. Denote by $C(\Gamma,\widehat\Z)$ the chain complex

	$$	\xymatrix{ 0 \ar[r] & [[\widehat\Z(E^{*}(\Gamma),*)]] \ar[r]^d &[[\widehat\Z V(\Gamma)]] \ar[r]^{\varepsilon} & \widehat\Z \ar[r] & 0}$$ of free profinite $\widehat\Z$-modules and continuous $\widehat\Z$-homomorphisms $d$ and $\varepsilon$ determined by $\varepsilon(v)=1$, for every $v \in V(\Gamma)$, $d(\overline{e})=d_1(e)-d_0(e)$, where $\overline{e}$ is the image of an edge $e \in E(\Gamma)$ in the quotient space $E^{*}(\Gamma)$, and $d(*)=0$. 
		One says that $\Gamma$ is a profinite tree if the sequence $C(\Gamma,\widehat\Z)$ is exact. 
	\end{deff}	
	
	  If $v$ and $w$ are elements of a tree (respectively profinite tree) $T$, one denotes by $[v,w]$ the smallest subtree (respectively profinite subtree) of $T$ containing $v$ and $w$.


 


Associated with the profinite graph of profinite groups $({\cal G}, \Gamma)$ there is
a corresponding  {\em  standard profinite tree}  (or universal covering graph)
  $S=S(G)=\bigcup_{m\in \Gamma}
G/\G(m)$ (cf. \cite[Theorem 3.8]{ZM1}).  The vertices of
$S$ are those cosets of the form
$g\G(v)$, with $v\in V(\Gamma)$
and $g\in G$; its edges are the cosets of the form $g\G(e)$, with $e\in
E(\Gamma)$; and the incidence maps of $S$ are given by the formulas: $$d_0 (g\G(e))= g\G(d_0(e)); \quad  d_1(g\G(e))=gt_e\G(d_1(e)) \ \ 
(e\in E(\Gamma), t_e=1\hbox{ if }e\in D), $$
where $D$ is a maximal subtree of $\Gamma$. 

\medskip 
 If $G=G_1\amalg_HG_2$ is an amalgamated free profinite product then $G=\Pi_1(\G, \Gamma)$ with $\Gamma$ having two vertices and one edge, $G_1,G_2$ being vertex groups and $H$ being an edge group. In contrast to the abstract case, $G_1,G_2$ do not always embed into $G$, i.e. a  profinite free product with amalgamation is not always proper. Note that $G$ is always proper  when $H$ is finite (see Exercise $9.2.7$ item $(3)$ in \cite{RZ}) which will be assumed for the rest of the paper.
 
  When $G$ is proper, there is
a corresponding  {\em  standard profinite tree}  (or universal covering graph)
  (cf. \cite[Theorem 3.8]{ZM1}) $S(G)=G/G_1\cup G/G_2\cup G/H$ and  $d_0 (g\G(e))= g\G(d_0(e)); \quad  d_1(g\G(e))=g\G(d_1(e)) \ \ 
(e\in E(\Gamma)).$ There is a natural  continuous action of
 $G$ on $S(G)$, and clearly $ G\backslash S$ is an edge with two vertices. 







\begin{prop}\label{o421} 

\begin{enumerate}  

\item[(i)] Let $R = R_{1}*_{L}\,R_2$. Then $R_i\cap R_j^r\leq   L^{b}$ for some $b \in R_i$, whenever $i\neq j$  or $r\not\in R_i$, $(i,j\in \{1,2\}).$ Moreover $N_{R}(K)= R_i $ for any normal subgroup $K\triangleleft R_i$ not contained in $L$, $(i=1,2).$

\item[(ii)] Let $R = R_{1}\amalg_{L}\,R_2$ be a profinite amalgamated free product. Then $R_i\cap R_j^r\leq   L^{b}$ for some $b \in R_i$, whenever $i\neq j$  or $r\not\in R_i$, $(i,j\in \{1,2\}).$ Moreover $N_{R}(K)= R_i$ for any  normal subgroup $  K\triangleleft R_i$  not contained in $L$, $(i=1,2).$

\end{enumerate}

\end{prop}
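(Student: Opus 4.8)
The plan is to obtain both assertions from the action of $R$ on its standard tree, treating the intersection statement first and deducing the normalizer statement from it as a formal consequence. Parts (i) and (ii) then run along identical lines: in (i) one uses the classical Bass--Serre tree of the amalgam, while in (ii) one uses the standard \emph{profinite} tree $S=S(R)=R/R_1\cup R/R_2\cup R/L$ together with the profinite Bass--Serre theory of \cite{R} (finiteness of $L$ guarantees properness, so that $S$ exists and $R$ acts on it as described in Section 2). In either case $R$ acts on $S$ with vertex representatives $v_1=R_1$, $v_2=R_2$ and edge representative $\tilde e=L$, with stabilizers $\mathrm{Stab}(gR_i)=R_i^{g^{-1}}$ and $\mathrm{Stab}(gL)=L^{g^{-1}}$; moreover the edges of $S$ incident to a vertex $gR_i$ are exactly the cosets $gbL$ with $b\in R_i$, since $d_0(hL)=hR_1$ and $d_1(hL)=hR_2$ force $h\in R_i g$.

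For the intersection statement, set $K=R_i\cap R_j^{\,r}$. Since $R_i=\mathrm{Stab}(v_i)$ and $R_j^{\,r}=\mathrm{Stab}(r^{-1}v_j)$, the subgroup $K$ fixes both $v_i$ and $w:=r^{-1}v_j$. Under the hypothesis these two vertices are distinct: if $i\neq j$ they lie in the disjoint orbit spaces $R/R_i$ and $R/R_j$, whereas if $i=j$ the equality $v_i=r^{-1}v_i$ would force $r\in\mathrm{Stab}(v_i)=R_i$, against $r\notin R_i$. Now the fixed-point set $S^{K}$ is a subtree of $S$, and being nonempty (it contains $v_i$ and $w$) it contains the smallest subtree $[v_i,w]$ pointwise. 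As $v_i\neq w$, the subtree $[v_i,w]$ contains an edge $e$ incident to $v_i$; by the description of incident edges, $e=bL$ with $b\in R_i$. Since $K$ fixes $e$ we get $K\leq \mathrm{Stab}(bL)=L^{b^{-1}}$ with $b^{-1}\in R_i$, which is the asserted bound.

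For the normalizer statement, let $K\triangleleft R_i$ with $K\not\leq L$. Plainly $R_i\leq N_R(K)$. Conversely take $g\in N_R(K)$, so that $K=K^{g}\leq R_i\cap R_i^{\,g}$. If $g\notin R_i$, the intersection statement applied with $j=i$ and $r=g$ yields $R_i\cap R_i^{\,g}\leq L^{b}$ for some $b\in R_i$, whence $K\leq L^{b}$, i.e. $K^{b^{-1}}\leq L$. But $b\in R_i$ and $K\triangleleft R_i$ give $K^{b^{-1}}=K$, so $K\leq L$, contradicting $K\not\leq L$. Hence $g\in R_i$, and therefore $N_R(K)=R_i$. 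Exactly the same chain of implications works in the profinite case (ii), using part (ii) of the intersection statement.

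The abstract case (i) is then routine classical Bass--Serre theory, where ``fixing two vertices implies fixing the geodesic between them pointwise'' is immediate and $[v_i,w]$ visibly leaves $v_i$ along an edge. The real content, and the step I would treat most carefully, lies in (ii): a profinite tree has no literal geodesic, so I must justify from the profinite structure theory in \cite{R} both that the fixed-point set $S^{K}$ is a profinite subtree (hence contains $[v_i,w]$ pointwise) and that $[v_i,w]$ genuinely contains an edge meeting $v_i$. The latter I expect to establish by a compactness/inverse-limit argument over the finite quotient trees of $S$, where in each quotient the images of $v_i$ and $w$ are joined by a path that departs $v_i$ through an edge, and passing to the limit produces an edge of $[v_i,w]$ incident to $v_i$. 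These profinite tree-theoretic points are the only genuine obstacle; everything else is formal.
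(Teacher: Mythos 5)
Your overall architecture is correct and, for the normalizer statement, it is exactly the paper's: the paper also deduces $N_R(K)=R_i$ from the intersection statement by taking $r\in N_R(K)$, using normality of $K$ in $R_i$ to absorb the conjugating element $b\in R_i$. Where you diverge is the intersection statement itself: the paper does not prove it but cites \cite[Corollary 7.1.5(b)]{R} (equivalently \cite[Corollary 3.13]{ZM1}) for the profinite case, remarking that the abstract case is the same argument run with classical Bass--Serre theory. You instead reprove the cited result from the action on the standard (profinite) tree, with the correct coset bookkeeping for stabilizers and incident edges. What your route buys is self-containedness; the cost is that you must supply the profinite tree theory yourself, and that is where the one substantive issue lies.

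The step you deferred --- that $[v_i,w]$ (equivalently the fixed subtree $S^K$, which is a profinite subtree by the same \cite[Theorem 4.1.5]{R} that the paper invokes in the proof of Proposition~\ref{cp1}) contains an edge incident to $v_i$ --- is \emph{false} for general profinite trees, so the inverse-limit argument exactly as you sketched it cannot be a proof at that level of generality. Concretely, the one-point compactification of a ray (vertices $0,1,2,\dots,\infty$, edges $e_n$ joining $n$ to $n+1$) is a profinite tree, being the inverse limit of finite paths obtained by collapsing the tail beyond $n$; in every finite quotient the images of $\infty$ and $0$ are joined by a path that leaves the image of $\infty$ through an edge, yet no edge of the tree is incident to $\infty$, and $[0,\infty]$ is the whole tree. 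The limit argument breaks there because the candidate edges produced in the finite quotients accumulate at the vertex $\infty$: the relevant sets of edges are not closed. What rescues your argument for the standard tree $S=R/R_1\sqcup R/R_2\sqcup R/L$ of a proper amalgam is that $E(S)=R/L$ is compact, hence closed in $S$, so edges cannot accumulate at vertices; then the sets $F_\beta$ of edges of $S^K$ having an endpoint mapping onto the image of $v_i$ in a finite quotient form a directed family of nonempty compact sets (nonempty by connectedness of the finite quotient graphs, directed because incidence of images is preserved under passing to coarser quotients), and any point of $\bigcap_\beta F_\beta$ is an edge of $S^K$ incident to $v_i$ itself, fixed by $K$, with stabilizer a conjugate $L^{b}$, $b\in R_i$. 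So your plan is sound and completable, but a correct write-up must make the closedness of the edge set of $S$ an explicit ingredient --- or simply quote \cite[Corollary 7.1.5(b)]{R} or \cite[Corollary 3.13]{ZM1}, as the paper does.
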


\begin{proof} (ii) is Corollary 7.1.5 (b) in \cite{R} or Corollary 3.13 in \cite{ZM1}. The proof of (i) is the same using the classical Bass-Serre theory instead of the profinite one. The last part of the statement follows from the first taking $r\in N_{R}(K)$.

\end{proof}

In the following proposition we compute the normalizer of an amalgamated finite subgroup $H$ of an amalgamated free product in the discrete and profinite categories.

\begin{prop}\label{cp1} Let $G=G_1*_{H} G_2$ be an amalgamated free product  of two residually finite groups $G_1$, $G_2$ with common finite subgroup $H$. Then the following hold: \begin{itemize}

\item[(i)] $N_{G}(H)=N_{G_1}(H)*_{H} N_{G_2}(H)$,

\item[(ii)] $N_{\widehat{G}}(H)=N_{\widehat{G_1}}(H)\amalg_{H} N_{\widehat{G_2}}(H)$.

\end{itemize}

\end{prop}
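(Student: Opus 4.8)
The plan is to deduce both statements from (profinite) Bass--Serre theory applied to the action of the normalizer on the subtree of points fixed by $H$. Write $T$ for the standard tree of the abstract amalgam $G=G_1*_HG_2$ and $S=S(\widehat G)$ for the standard profinite tree of the profinite amalgam $\widehat G=\widehat{G_1}\amalg_H\widehat{G_2}$; this amalgam is proper since $H$ is finite, and $G$ embeds densely in $\widehat G$. In both trees the base edge $e_0=1\cdot H$ and its two endpoints $v_1=1\cdot G_1$ (resp. $1\cdot\widehat{G_1}$) and $v_2=1\cdot G_2$ (resp. $1\cdot\widehat{G_2}$) are fixed by $H$, since $H\leq G_i$. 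Set $N=N_G(H)$ (resp. $N=N_{\widehat G}(H)$); as $n\cdot T^H=T^{nHn^{-1}}=T^H$ for $n\in N$, the normalizer $N$ acts on the fixed-point set $T^H$ (resp. $S^H$), and the idea is to read off the amalgam decomposition of $N$ from this action.

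For (i), recall that the fixed-point set $T^H$ of the finite group $H$ is a nonempty subtree of $T$. I would first determine its $N$-structure. An edge $gH$ is fixed by $H$ iff $g^{-1}Hg\leq H$, i.e. (as $H$ is finite) iff $g\in N$; hence $E(T^H)=Ne_0$ is a single $N$-orbit with stabilizer $N\cap H=H$. Since $T^H$ is a connected subtree containing the edge $e_0$, it has no isolated vertices, so every vertex is an endpoint of some $nH\ (n\in N)$; thus $V(T^H)=Nv_1\cup Nv_2$, the stabilizer of $v_i$ being $N\cap G_i=N_{G_i}(H)$. Because $T$ is bipartite with $G$ preserving the two vertex colours, $v_1$ and $v_2$ lie in distinct $N$-orbits. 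Hence $N\backslash T^H$ is a single segment, and the fundamental theorem of Bass--Serre theory identifies $N$ with the fundamental group of the corresponding graph of groups, namely $N_{G_1}(H)*_HN_{G_2}(H)$, which proves (i).

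For (ii) I would run the same argument in the profinite category. By the fixed-point theorem for finite groups acting on profinite trees (see \cite{R}), $S^H$ is a nonempty profinite subtree, and $N=N_{\widehat G}(H)$ acts on it. Exactly as above, an edge $gH$ is $H$-fixed iff $g\in N$, so $E(S^H)=Ne_0$ is a single $N$-orbit (a copy of $N/H$) with stabilizer $H$; the base vertices have $N$-stabilizers $N\cap\widehat{G_i}=N_{\widehat{G_i}}(H)$ and, by the colouring of $S$, lie in distinct $N$-orbits. Once one checks that these are the only edge- and vertex-orbits, the quotient $N\backslash S^H$ is again a single segment, and the profinite version of the structure theorem for groups acting on profinite trees (as in \cite{R, ZM1}, which already underlie Proposition \ref{o421}) yields $N=N_{\widehat{G_1}}(H)\amalg_HN_{\widehat{G_2}}(H)$, the amalgam being proper because its edge group $H$ is finite.

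The hard part will lie entirely in (ii): profinite trees do not behave as naively as abstract ones, so the step from ``$E(S^H)$ is one orbit'' to ``$V(S^H)=Nv_1\cup Nv_2$ and $N\backslash S^H$ is a segment'' cannot be argued by choosing geodesics, and must instead be handled through the closures of the $N$-orbits and the connectivity of profinite graphs. Concretely I would show that the closed $N$-invariant subset $\overline{Ne_0\cup Nv_1\cup Nv_2}$ is a profinite subtree coinciding with $S^H$ and that its quotient is the expected finite segment, before invoking the profinite structure theorem. Throughout, the finiteness of $H$ is exactly what makes everything work: it guarantees that $S^H$ is nonempty, that the resulting amalgam is proper, and that the edge stabilizer of the action of $N$ is precisely $H$.
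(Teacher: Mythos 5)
Your architecture is exactly the paper's: let $N=N_{\widehat G}(H)$ act on the nonempty profinite subtree $S^H$ of $H$-fixed points of the standard tree $S=S(\widehat G)$, show the quotient is a segment with edge group $H$ and vertex groups $N_{\widehat{G_i}}(H)$, and invoke the structure theorem of \cite{ZM}, \cite{R}; part (i), the edge-orbit computation in (ii), and the stabilizer identifications are all correct. The genuine gap is precisely the step you flag as ``the hard part'': proving $V(S^H)=Nv_1\cup Nv_2$. Your concrete plan for it is circular. The set $Ne_0\cup Nv_1\cup Nv_2$ is \emph{already closed}: $N$ is a closed subgroup of the compact group $\widehat G$, so $N\widehat{G_i}$ and $NH=N$ are compact and their images in $\widehat G/\widehat{G_i}$ and $\widehat G/H$ are closed. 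Hence ``show that $\overline{Ne_0\cup Nv_1\cup Nv_2}$ coincides with $S^H$'' is word-for-word the vertex claim itself, and neither closures nor connectivity can establish it: as you sense but do not resolve, a connected profinite tree may have vertices incident to no edge at all (the one-point compactification of an infinite ray is a profinite tree whose point at infinity lies on no edge), so ``$S^H$ is connected and $E(S^H)$ is one orbit'' does not force every $H$-fixed vertex to be an endpoint of an $H$-fixed edge --- which is exactly what fails to be addressed.

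There are two short ways to close the gap, and your proof needs one of them. The algebraic one uses the tool you cite but never deploy, Proposition \ref{o421}(ii): if the vertex $g\widehat{G_1}$ is $H$-fixed then $H^g\leq\widehat{G_1}$, while trivially $H^g\leq\widehat{G_1}^g$; so if $g\notin\widehat{G_1}$, Proposition \ref{o421}(ii) gives $H^g\leq\widehat{G_1}\cap\widehat{G_1}^g\leq H^b$ for some $b\in\widehat{G_1}$, whence $H^g=H^b$ by finiteness of $H$, so $gb^{-1}\in N$ and $g\widehat{G_1}=(gb^{-1})\widehat{G_1}\in Nv_1$ (type-$2$ vertices are identical). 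The topological one, which is what the paper's terse ``Thus $S(\widehat G)/\widehat G\cong S(\widehat G)^H/N$'' implicitly rests on, works in the quotient rather than in $S^H$: the graph $N\backslash S^H$ is connected (every finite quotient graph of it is a quotient of a finite connected quotient of $S^H$) and has exactly one edge by your orbit computation; but a connected profinite graph with a single edge has at most two vertices, since any further vertex could be separated from the two endpoints in some finite quotient graph \cite[Proposition 2.1.4]{R}, which would then be a disconnected finite graph with one edge. As $v_1$ and $v_2$ lie in distinct orbits, $N\backslash S^H$ is a segment and the structure theorem applies. With either patch your argument becomes essentially the paper's proof, whose own write-up also makes only the edge case explicit.
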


\begin{proof} (i) The abstract version of the proof is analogous to the profinite version and will be omitted;  the abstract version of results quoted in the proof below can be found in \cite{S} or \cite{DI}.

\medskip
(ii)   Let $S(\widehat{G})$ be the standard profinite tree on which $\widehat{G}$ acts continuously. Denote by $S(\widehat{G})^{H}$ the subset of $S(\widehat{G})$ of  fixed points with respect to the action of $H$. Then $S(\widehat{G})^{H}$ is a profinite tree (Theorem 2.8 in \cite{ZM1} or Theorem 4.1.5 \cite{R}), also note that $S(\widehat{G})^{H} \neq \emptyset$ since  $H$ is a finite group (see Theorem 2.10 in \cite{ZM1} or Theorem 4.1.8 \cite{R}). Moreover $N_{\widehat{G}}(H)$ acts continuously on $S(\widehat{G})^{H}.$ Indeed, let $s \in S(\widehat{G})^{H}, g \in N_{\widehat{G}}(H)$ and $h \in H,$ then: $$h\cdot(g\cdot s)=(hg)\cdot s=(gh')\cdot s = g\cdot (h'\cdot s)=g\cdot s,$$ for some $h'\in H,$  whence follows that $g\cdot s \in S(\widehat{G})^{H},$ where $g\cdot s$ means the action of $g$ on $s$. 

Now the graph $S(\widehat{G})^{H}/N_{\widehat{G}}(H)$ has only one edge. To prove this it suffices to show that given $g\in \widehat{G}$ and $e\in E(S(\widehat{G})^{H})$, then $ge\in E(S(\widehat{G})^{H})$ implies that $g\in N_{\widehat{G}}(H).$ But this is obvious because both $H$ and $H^{g^{-1}}$ stabilize $ge$  which implies $H= H^{g^{-1}}$ as needed. Thus $S(\widehat{G})/\widehat{G} \cong S(\widehat{G})^{H}/N_{\widehat{G}}(H)$ is an isomorphism of graphs. Now, note that $S(\widehat{G})$ is connected and simply connected (see Theorem 3.4 in \cite{ZM} or Theorem 6.3.5 \cite{R}), thus  we can apply Proposition 4.4 in \cite{ZM} or Theorem 6.6.1 \cite{R} to deduce that $N_{\widehat{G}}(H)=N_{\widehat{G_1}}(H)\amalg_{H} N_{\widehat{G_2}}(H)$.
\end{proof}

\begin{cor}\label{8} If $G_1$, $G_2$ are polycyclic-by-finite, then $N_{G}(H)$ is dense in $N_{\widehat{G}}(H)$. Furthermore, $$\overline{N}_{\widehat{G}}(H)=\overline{N}_{G}(H)$$ where $\overline{N}_{G}(H)$ and $\overline{N}_{\widehat{G}}(H)$ are the natural images (by restriction) of $N_{G}(H)$ and $N_{\widehat{G}}(H)$ in the automorphism group $Aut(H),$ respectively. 

\end{cor}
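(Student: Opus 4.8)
The plan is to prove the density statement first and then deduce the equality of images in $Aut(H)$, which is a formal consequence. For the latter, note that the restriction-of-conjugation map $N_{\widehat{G}}(H)\to Aut(H),\ g\mapsto \tau_g|_H$, is continuous: since $H$ is finite, each map $g\mapsto g^{-1}hg$ is continuous into the discrete set $H$, and $Aut(H)$ is discrete. Hence, once $N_G(H)$ is dense in $N_{\widehat G}(H)$, its image $\overline N_G(H)$ is dense in the image $\overline N_{\widehat G}(H)$; but $\overline N_{\widehat G}(H)$ is a finite subset of the discrete group $Aut(H)$, so ``dense'' forces $\overline N_G(H)=\overline N_{\widehat G}(H)$, which is the second assertion.

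It remains to prove density. By Proposition \ref{cp1} we have $N_G(H)=N_{G_1}(H)*_H N_{G_2}(H)$ and $N_{\widehat G}(H)=N_{\widehat{G_1}}(H)\amalg_H N_{\widehat{G_2}}(H)$. The closure of $N_G(H)$ in $N_{\widehat G}(H)$ is a closed subgroup containing $N_{G_1}(H)$ and $N_{G_2}(H)$, hence also the closures of these in $\widehat{G_1}$ and $\widehat{G_2}$ (recall $G_i$ is closed in $\widehat G$ with closure $\widehat{G_i}$). If I can show that the closure of $N_{G_i}(H)$ in $\widehat{G_i}$ is the full normalizer $N_{\widehat{G_i}}(H)$, then the closure of $N_G(H)$ contains both factors $N_{\widehat{G_i}}(H)$, which topologically generate the profinite amalgam $N_{\widehat G}(H)$; density then follows. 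Thus everything reduces to the vertex-group statement: \emph{for a polycyclic-by-finite $G_i$ and finite $H\le G_i$, $N_{G_i}(H)$ is dense in $N_{\widehat{G_i}}(H)$.}

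To handle the vertex groups I would use the two exact sequences arising from the conjugation action on $H$,
\[
1\to C_{G_i}(H)\to N_{G_i}(H)\to \overline N_{G_i}(H)\to 1
\qquad\text{and}\qquad
1\to C_{\widehat{G_i}}(H)\to N_{\widehat{G_i}}(H)\to \overline N_{\widehat{G_i}}(H)\to 1,
\]
and reduce the claim to two separability facts about polycyclic-by-finite groups: (A) $C_{\widehat{G_i}}(H)=\overline{C_{G_i}(H)}$, and (B) $\overline N_{G_i}(H)=\overline N_{\widehat{G_i}}(H)$. Granting these, the closure $\overline{N_{G_i}(H)}$ contains the kernel $\overline{C_{G_i}(H)}=C_{\widehat{G_i}}(H)$ by (A) and surjects onto the quotient $\overline N_{\widehat{G_i}}(H)$ by (B), hence equals all of $N_{\widehat{G_i}}(H)$. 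Statement (A) I would obtain from the separability of centralizers (polycyclic-by-finite groups are conjugacy separable by Formanek and Remeslennikov and subgroup separable by the classical theorem of Mal'cev), writing $C_{\widehat{G_i}}(H)=\bigcap_{h\in H}C_{\widehat{G_i}}(h)$ and matching each factor with the closure of $C_{G_i}(h)$.

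The main obstacle is (B). Given $g\in N_{\widehat{G_i}}(H)$ inducing $\phi=\tau_g|_H\in Aut(H)$, element-wise conjugacy separability only yields, for each $h$ separately, some $n_h\in G_i$ with $n_h^{-1}hn_h=g^{-1}hg$; what is needed is a single $n\in G_i$ realizing $\phi$ simultaneously on all of $H$, i.e. $n^{-1}hn=g^{-1}hg$ for every $h\in H$ at once. Equivalently, the coset $C_{\widehat{G_i}}(H)\,g$ of elements realizing $\phi$ must meet $G_i$, which is exactly a statement of simultaneous conjugacy separability for the finite tuple of elements of $H$ (equivalently, separability of the $G_i$-conjugacy class of $H$ together with its fusion). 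It is precisely here that the full strength of the separability properties of polycyclic-by-finite groups must be invoked; with (B) established, the argument closes as above and the corollary follows.
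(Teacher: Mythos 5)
Your reduction is the same as the paper's: by Proposition \ref{cp1} both $N_G(H)$ and $N_{\widehat G}(H)$ decompose as amalgams of the vertex normalizers, so density of $N_G(H)$ follows once one knows that $N_{G_i}(H)$ is dense in $N_{\widehat G_i}(H)$ for each $i$; and your deduction of $\overline N_G(H)=\overline N_{\widehat G}(H)$ from density is the same formal step the paper performs by passing to images. The difference is at the vertex groups: the paper does not prove the vertex-group density at all, it quotes Proposition 3.3 of \cite{RSZ} (for polycyclic-by-finite groups), whereas you attempt to prove it from scratch and do not close the argument.

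That is where there is a genuine gap. Your statement (B) --- that any automorphism of $H$ induced by an element of $N_{\widehat G_i}(H)$ is already induced by an element of $N_{G_i}(H)$ --- is precisely the hard content of the cited result, and it is \emph{not} a consequence of the two properties you invoke. Conjugacy separability (Formanek--Remeslennikov) only gives, for each $h\in H$ separately, an element of $G_i$ conjugating $h$ to $g^{-1}hg$; what (B) needs is a single element of $G_i$ realizing the whole map $h\mapsto g^{-1}hg$ simultaneously, i.e.\ separability of the conjugation orbit of the finite \emph{tuple} $(h)_{h\in H}$, and Mal'cev subgroup separability does not supply this either. You name this obstacle correctly but then simply assert that ``the full strength of the separability properties'' must be invoked, which is an appeal to an unproved (and uncited) theorem; the true source is the orbit-separability results of Grunewald--Segal for polycyclic-by-finite groups, packaged exactly as Proposition 3.3 of \cite{RSZ}. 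A smaller but related issue: your step (A), the centralizer condition $C_{\widehat G_i}(H)=\overline{C_{G_i}(H)}$, is also not a formal consequence of conjugacy separability plus subgroup separability (and writing $C_{\widehat G_i}(H)=\bigcap_{h}C_{\widehat G_i}(h)$ additionally requires knowing that closure commutes with the finite intersection). So as written the proposal proves the corollary only modulo the very statement the paper outsources to the literature; to be complete you must either cite that result or genuinely prove the tuple-conjugacy separability for polycyclic-by-finite groups.
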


\begin{proof} By Proposition \ref{cp1}  $$N_{\widehat{G}}(H)=N_{\widehat{G_1}}(H)\amalg_{H} N_{\widehat{G_2}}(H)$$ and so  $\left\langle N_{\widehat{G_1}}(H), N_{\widehat{G_2}}(H)\right\rangle$ is dense in $N_{\widehat{G}}(H)$. By Proposition 3.3 in \cite{RSZ}  $N_{G_{1}}(H)$ is dense in $N_{\widehat{G}_{1}}(H)$ and $N_{G_{2}}(H)$ is dense in $N_{\widehat{G}_{2}}(H)$, therefore  $N_{G}(H)$ is dense in $N_{\widehat{G}}(H)$. Thus $$\overline{N}_{\widehat{G}}(H)=\left\langle \overline{N}_{\widehat{G_1}}(H), \overline{N}_{\widehat{G_2}}(H)\right\rangle=\left\langle \overline{N}_{G_1}(H),\overline{N}_{G_2}(H)\right\rangle=\overline{N}_{G}(H).$$ 


\end{proof}

We conclude this section observing that the proof of Proposition \ref{cp1} is valid for any free pro-$\C$ product with finite amalgamation, where $\C$ is a class of finite groups closed for subgroups, quotient and extensions. We state it here for future references.

\begin{prop}\label{profinite normalizer} Let  $\C$ be a class of finite groups closed for subgroups, quotients and extensions and $G=G_1\amalg_H G_2$ be a free pro-$\C$ product with finite amalgamation. Then $N_G(H)=N_{G_1}(H)\amalg_H N_{G_2}(H)$.\end{prop}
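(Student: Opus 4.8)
The plan is to repeat verbatim the tree-theoretic argument used for Proposition \ref{cp1}(ii), checking only that each profinite ingredient has a pro-$\C$ counterpart under the hypotheses on $\C$. Since $H$ is finite and $\C$ is closed under subgroups, quotients and extensions, the free pro-$\C$ product $G=G_1\amalg_H G_2$ is proper and $H$ is a pro-$\C$ group (so $H\in\C$); hence there is a standard pro-$\C$ tree $S=S(G)=G/G_1\cup G/G_2\cup G/H$ on which $G$ acts continuously with $G\backslash S$ a single edge. First I would form the fixed-point subspace $S^H$ of $S$.

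The key steps are then as follows. (1) Argue that $S^H$ is a nonempty pro-$\C$ subtree of $S$, invoking the pro-$\C$ versions of the theorem that the fixed set of a finite group is a subtree and of the fixed-point theorem (the analogs of Theorems $2.8$ and $2.10$ in \cite{ZM1}, or $4.1.5$ and $4.1.8$ in \cite{R}). (2) Observe that $N_G(H)$ acts continuously on $S^H$, by the same computation $h(g\cdot s)=(gh')\cdot s=g\cdot s$ for $h,h'\in H$. (3) Show $N_G(H)\backslash S^H$ is a single edge by the identical stabilizer argument: if $g$ carries an edge $e\in E(S^H)$ to $ge\in E(S^H)$, then both $H$ and $H^{g^{-1}}$ stabilize $ge$, forcing $H=H^{g^{-1}}$ and hence $g\in N_G(H)$. (4) Conclude $S/G\cong S^H/N_G(H)$ is an edge. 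Since $S$ is connected and simply connected in the pro-$\C$ sense, the pro-$\C$ recognition theorem for fundamental groups of graphs of groups (the analog of Proposition $4.4$ in \cite{ZM} / Theorem $6.6.1$ in \cite{R}) then yields $N_G(H)=N_{G_1}(H)\amalg_H N_{G_2}(H)$, because the stabilizers in $N_G(H)$ of the $H$-fixed vertices $G_1,G_2$ and of the edge $H$ are $N_{G_i}(H)=N_G(H)\cap G_i$ and $H$ respectively.

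The main obstacle is making sure the cited tree-theoretic theorems genuinely transfer to the pro-$\C$ category. The fixed-point theorem for a finite group acting on a pro-$\C$ tree, the fact that the fixed set is again a pro-$\C$ tree, the connectivity and simple-connectivity of the standard pro-$\C$ tree, and especially the reconstruction of the amalgam from the quotient graph of groups all rest on cohomological vanishing arguments that require $\C$ to be closed under subgroups, quotients and extensions --- precisely the hypotheses imposed. Thus the real content is not new combinatorics but the observation that these closure properties are exactly what the pro-$\C$ Bass--Serre machinery of \cite{R} needs, so that every step of the profinite proof of Proposition \ref{cp1}(ii) remains valid word for word.
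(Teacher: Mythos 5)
Your proposal is correct and is essentially identical to the paper's proof: the paper itself establishes this proposition by simply observing that the tree-theoretic argument for Proposition \ref{cp1}(ii) (fixed subtree $S^H$, continuous action of $N_G(H)$ on it, one-edge quotient via the stabilizer argument, then the recognition theorem for fundamental groups of graphs of groups) goes through word for word in the pro-$\C$ category, exactly as you describe. Your additional remarks on properness and on the closure hypotheses being what the pro-$\C$ Bass--Serre machinery requires match the paper's intent.
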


\section{OE-groups}

A finitely generated residually finite group $G$ with less than 2 ends, i.e. a finite or one ended group, will be called an $OE$-group in the paper.   It follows from the famous Stallings theorem that $G$ is an $OE$-group  if and only if whenever it acts on a tree with finite edge stabilizers
it  has a global fixed point. A finitely generated profinite group will be called $OE$-group if it has the same property:   whenever it acts   on a  profinite tree  with finite edge stabilizers, then it fixes a vertex. 

 A finitely generated residually finite group $G$  will be called $\widehat{OE}$-group if $\widehat G$ is $OE$-group. Note that an $\widehat{OE}$-group is automatically $OE$-group, since if a residually finite group $G$ splits as an amalgamated free product or HNN-extension over a finite group then so does $\widehat G$.  

The next proposition gives a sufficient condition for $G$ to be an $\Oe$-group.

\begin{prop}\label{virtually cyclic}(\cite[Proposition 3.1]{BPZ}) Let $G$ be a finitely generated residually finite group such that $\widehat G$ does not have a non-abelian free pro-$p$ subgroup. If  $G$ is not virtually infinite cyclic, then $G$ is an $\Oe$-group.

\end{prop}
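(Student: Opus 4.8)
The plan is to unwind the definitions and show that $\widehat G$ is an $OE$-group, i.e.\ that every continuous action of $\widehat G$ on a profinite tree $T$ with finite edge stabilizers has a global fixed point. So I would argue by contradiction: suppose $\widehat G$ acts on such a $T$ without fixing any vertex. Since $\widehat G$ is finitely generated, I would first extract a nonempty minimal $\widehat G$-invariant profinite subtree $D\subseteq T$ (its existence and uniqueness come from the profinite Bass--Serre theory of \cite{R}), on which $\widehat G$ then acts irreducibly with finite edge stabilizers and finite quotient graph $\widehat G\backslash D$. Applying the profinite structure (decomposition) theorem to this action expresses $\widehat G$ as the fundamental group $\Pi_1(\G,\Gamma)$ of a reduced finite graph of profinite groups $(\G,\Gamma)$ with finite edge groups, whose decomposition is non-trivial precisely because there is no global fixed point.

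The core of the argument is then a purely structural dichotomy applied to this decomposition, after reducing to its two building blocks: a proper amalgam $\widehat G=A\amalg_C B$ with $C$ finite and $A\neq C\neq B$, or an HNN-extension over a finite associated subgroup $C$. I would isolate the \emph{degenerate} configurations, namely an HNN-extension whose base vertex group coincides with $C$ (so that $\widehat G$ is $C$-by-$\widehat{\Z}$) and an amalgam $A\amalg_C B$ with $[A:C]=[B:C]=2$ (the generalized dihedral case, again finite-by-$\widehat{\Z}$). In each degenerate case $\widehat G$ is virtually $\widehat{\Z}$, and since $G$ is finitely generated and residually finite this forces $G$ to be virtually infinite cyclic (finite-index subgroups have their profinite completion equal to their closure, so a $\widehat{\Z}$-by-finite completion pulls back to a $\Z$-by-finite group), contradicting the hypothesis.

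In every remaining, non-degenerate configuration the claim is that $\widehat G$ contains a non-abelian free pro-$p$ subgroup for a suitable prime $p$, again contradicting the hypothesis. This is the step I expect to be the main obstacle. The idea is to choose $p$ coprime to the (bounded) orders of the finitely many finite edge groups, so that a pro-$p$ Sylow subgroup $P$ of $\widehat G$ acts on $D$ with \emph{trivial} edge stabilizers, and then to invoke the theory of pro-$p$ subgroups of profinite free (amalgamated) products to produce a free non-abelian pro-$p$ subgroup inside $P$ via hyperbolic elements of the action. The delicate point is exactly the borderline between the degenerate cases above, which yield only procyclic behaviour, and the genuinely branching ones (e.g.\ an index $\ge 3$ in a factor, or a non-degenerate HNN); cleanly separating these and exhibiting the free pro-$p$ subgroup in the latter is where the real work lies.

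Since both alternatives of the dichotomy contradict the two standing hypotheses---that $\widehat G$ has no non-abelian free pro-$p$ subgroup and that $G$ is not virtually infinite cyclic---no fixed-point-free action of $\widehat G$ can exist. Therefore $\widehat G$ fixes a vertex of every profinite tree on which it acts with finite edge stabilizers, which is to say $\widehat G$ is an $OE$-group and hence $G$ is an $\Oe$-group, as required.
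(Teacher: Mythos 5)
The paper itself does not prove this proposition: it quotes it as \cite[Proposition 3.1]{BPZ}, and the proof there is a direct application of the Zalesskii--Melnikov/Ribes dichotomy for irreducible actions on profinite trees, not a reduction to a splitting. Your proposal has the right overall shape (minimal invariant subtree, then a dichotomy between virtually-$\widehat{\Z}$ behaviour and free pro-$p$ subgroups), but it contains a genuine gap at the decomposition step. You pass from the minimal $\widehat G$-invariant subtree $D$ to a presentation of $\widehat G$ as the fundamental group of a \emph{finite} graph of profinite groups with finite edge groups, justifying the finiteness of $\widehat G\backslash D$ by finite generation of $\widehat G$. This is not available in the profinite category. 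In abstract Bass--Serre theory, minimal actions of finitely generated groups are cocompact because the translates of the finitely many paths $[v,g_iv]$ (with $g_i$ generators) cover the tree and each such path has finitely many edges; for profinite trees the geodesic $[v,g_iv]$ is itself a profinite, typically infinite, subtree, so the argument collapses, and a (topologically) finitely generated profinite group acting minimally need not have finite quotient graph. Without a finite quotient graph the only decomposition theorem in the literature, \cite[Theorem 6.6.1]{R}, does not apply; the absence of a general ``action $\Rightarrow$ decomposition'' principle is precisely the fundamental failure of profinite Bass--Serre theory. Consequently your reduction to the two cases (proper amalgam $A\amalg_C B$ or HNN-extension over finite $C$) is not justified, and the ``parabolic'' phenomena that a splitting would never see are exactly what is being swept under the rug.

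The argument that actually works, and is the one behind \cite[Proposition 3.1]{BPZ}, bypasses any splitting: after passing to the minimal $\widehat G$-invariant subtree (which exists by compactness and, having more than one vertex, carries an irreducible action), one applies directly the dichotomy theorem for irreducible actions on profinite trees (\cite[Section 4.2]{R}, Theorem 4.2.10 there; see also \cite{ZM1}): such a group either contains a non-abelian free pro-$p$ subgroup for some prime $p$, or has a very restricted soluble-by-finite structure, which in the presence of finite edge stabilizers makes $\widehat G$ finite-by-(procyclic or pro-dihedral), hence virtually $\widehat\Z$. The first alternative is excluded by hypothesis; the second forces $G$ to be virtually infinite cyclic --- your parenthetical pullback argument is essentially correct here, using that $\overline{G_0}\cong\widehat{G_0}$ for a finite-index subgroup $G_0\le G$ --- which is also excluded. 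Note finally that the step you yourself flag as ``where the real work lies,'' producing free pro-$p$ subgroups in the non-degenerate configurations, is exactly the content of the cited theorem (proved by analysing hyperbolic elements of the action, not by Sylow considerations inside an amalgam), and your proposal leaves it unproved. So as written the proof rests on a decomposition theorem that does not exist and on a deferred ping-pong step; both holes are closed simultaneously by invoking \cite[Theorem 4.2.10]{R} on the minimal subtree.
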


 The class of residually finite $\widehat{OE}$-groups is quite large. All arithmetic groups of $rank >1$ as well as indecomposable into a free product  Fuchsian groups, 3-manifold groups and Right Angled Artin groups and many others are $\Oe$-groups.

If $G$   satisfies an identity, then $\widehat G$ satisfies the same identity and so does not have non-abelian free  pro-$p$ subgroups, so it is an $OE$ and an $\Oe$-group by Proposition \ref{virtually cyclic} unless it is virtually infinite cyclic. All residullay finite $Fab$ groups and in particular just-infinite groups are also $\widehat{OE}$-groups.  

\bigskip

\begin{prop} \label{2a}(\cite[Proposition 3.2]{BPZ}) Let $G = G_{1}*_{H}\,G_2$ and $B = B_{1}*_{K}\,B_2$ be   amalgamated free products of groups  with finite amalgamation.  Suppose  $G_i, B_i ( i=1,2)$ are  finitely generated residually finite $OE$-groups.  If $G\cong B$ then there exist an isomorphism $\psi: G \longrightarrow B$ such that $\psi(H)= K$, $\psi(G_1)= B_1$, $\psi(G_2)= B_2$  (up to possibly interchanging $B_1$ and $B_2$ in $B$).

\end{prop}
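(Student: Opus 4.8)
The plan is to use the action of $G$ on its Bass--Serre tree $T$ associated with the given splitting $G=G_1*_HG_2$, and to recover the second splitting $B=B_1*_KB_2$ via an action of $G$ (identified with $B$ through the hypothetical isomorphism) on the tree $T'$ coming from that second decomposition. First I would fix the isomorphism $G\cong B$ and regard both $T$ and $T'$ as trees on which $G$ acts, with $G$ acting on $T$ with finite edge stabilizers (the conjugates of $H$) and one edge modulo $G$, and likewise on $T'$ with edge stabilizers the conjugates of $K$. The goal is to show the vertex groups $G_1,G_2$ of the first action each fix a vertex of $T'$, and symmetrically.

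The key tool is the $OE$-property of the vertex groups. Each $G_i$ is a finitely generated residually finite $OE$-group, so by definition whenever $G_i$ acts on a tree with finite edge stabilizers it has a global fixed point. Restricting the action of $G$ on $T'$ to the subgroup $G_1$, the edge stabilizers remain finite (they are conjugates of $K$ intersected with $G_1$), so $G_1$ fixes a vertex of $T'$, i.e. $G_1\leq B_j^{g}$ for some $j\in\{1,2\}$ and some $g\in B$; the same holds for $G_2$. Symmetrically, running the argument with the roles of the two splittings reversed and using that $B_1,B_2$ are $OE$-groups, each $B_i$ fixes a vertex of $T$, so it is conjugate into some $G_k$. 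The plan is then to combine these two containments to show that, after conjugating and possibly interchanging $B_1,B_2$, one has $G_1=B_1$ and $G_2=B_2$: if $G_1\leq B_1^g$ and $B_1^g\leq G_k$ then $G_1\leq G_k$, forcing $k=1$ (since $G_1\not\leq G_2$ as the product is non-fictitious and vertex groups meet only in conjugates of $H$), and the sandwich $G_1\leq B_1^g\leq G_1$ yields equality $G_1=B_1^g$; adjusting $\psi$ by the inner automorphism $\tau_{g}$ makes $\psi(G_1)=B_1$.

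Once the two maximal vertex groups are matched, I would recover the amalgamated subgroup as the intersection $G_1\cap G_2$. Indeed $H=G_1\cap G_2$ in the first splitting and $K=B_1\cap B_2$ in the second, so after arranging $\psi(G_1)=B_1$ and $\psi(G_2)=B_2$ the equality $\psi(H)=\psi(G_1\cap G_2)=B_1\cap B_2=K$ follows automatically. The interchange of $B_1$ and $B_2$ in the statement accounts for the ambiguity in which vertex of $T'$ each $G_i$ is sent to; one simply orients the matching so that indices agree.

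The main obstacle I anticipate is bookkeeping the conjugators: a priori $G_1$ is conjugate into $B_1$ and $B_1$ is conjugate into $G_k$ by possibly different elements, and one must verify that the composite containment is genuinely an equality rather than a proper inclusion. This is handled by a standard minimality/maximality argument for vertex stabilizers of a tree action with finite edge groups---a finitely generated subgroup fixing a vertex and containing a conjugate of the full vertex group cannot be proper---together with Proposition~\ref{o421}(i), which controls the intersections $G_i\cap G_j^r$ and guarantees that a vertex group is its own stabilizer and is not conjugate into the other factor except inside a conjugate of $H$. The non-fictitious hypothesis $G_1\neq H\neq G_2$ is exactly what rules out degenerate collapses and makes the indices $k$ well defined, so the inclusions tighten to the required equalities.
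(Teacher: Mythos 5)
Your opening moves are correct and follow what is clearly the intended route given the paper's toolkit: restricting the $B$-action on the Bass--Serre tree $T'$ of $B=B_1*_KB_2$ to each $G_i$ (an $OE$-group) gives a fixed vertex, hence $G_i\leq B_j^{g}$; symmetrically each $B_j$ is conjugate into some $G_k$; and Proposition \ref{o421}(i) together with non-fictitiousness tightens the sandwiches to equalities, since an inclusion of a full vertex group into a ``wrong'' conjugate would force that vertex group to equal the finite amalgamated subgroup. This correctly yields $\psi(G_1)=B_1^{g_1}$ and $\psi(G_2)=B_2^{g_2}$, up to interchanging $B_1$ and $B_2$.

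The genuine gap is the last step. After composing with an inner automorphism you have $\psi(G_1)=B_1$ but only $\psi(G_2)=B_2^{z}$ for some $z\in B$, and your text passes silently from this to ``after arranging $\psi(G_1)=B_1$ \emph{and} $\psi(G_2)=B_2$\,''. To finish, you need a further automorphism of $B$ that preserves $B_1$ and carries $B_2^{z}$ onto $B_2$. The only automorphisms your argument produces are inner, and an inner automorphism $\tau_c$ preserves $B_1$ only for $c\in N_B(B_1)=B_1$ (Proposition \ref{o421}); then $\tau_c(B_2^z)=B_2^{zc^{-1}}$ equals $B_2$ if and only if $z\in B_2B_1$. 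So the missing content is a theorem about abstract amalgams: if $B=B_1*_KB_2$ and $B=\langle B_1,B_2^{z}\rangle$ (indeed $B=B_1*_{\psi(H)}B_2^{z}$), then $z\in B_2B_1$. This is not conjugator bookkeeping. It is exactly the point where the abstract statement is stronger than its profinite analogue, Proposition \ref{compart1}, whose conclusion is only $\psi(G_2)=B_2^{b}$ for some $b\in B$; your proposal uses nothing that distinguishes the abstract from the profinite setting (fixed points, Proposition \ref{o421}, inner adjustments all hold profinitely), so if it were complete as written it would prove exact matching of both factors for profinite amalgams too --- which fails, as shown by the examples of \cite[Section 4]{GZ}, and this failure is precisely why the set $N^+$ appears in Theorems \ref{fixed subgroups int} and \ref{fixed subgroups}. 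Closing the gap requires an additional Bass--Serre/normal-form argument (for instance, showing the unique vertex of $T'$ fixed by $\psi(G_2)$ must be adjacent to the one fixed by $B_1$, or working inside $N_B(K)=N_{B_1}(K)*_KN_{B_2}(K)$ from Proposition \ref{cp1}(i)). Only after that does your concluding identity $\psi(H)=\psi(G_1\cap G_2)=B_1\cap B_2=K$ ``follow automatically''.
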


\begin{prop} \label{compart1}(\cite[Proposition 3.3]{BPZ}) Let $G = G_{1}\amalg_{H}\,G_2$ and $B = B_{1}\amalg_{K}\,B_2$ be   profinite amalgamated free products of  profinite groups  with finite amalgamation. Suppose  $G_i,B_i\  (i=1,2)$ are OE-groups.   If $G\cong  B $ then there exist an isomorphism $\psi:G\rightarrow  B$ such that $\psi(H) = K$, $\psi(G_1) =B_{1}$ and $\psi(G_2) ={B_{2}}^{b}$ for some $b\in B$ (up to possibly interchanging $B_1$ and $B_2$ in $B$).

\end{prop}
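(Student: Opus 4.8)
The plan is to follow the pattern of the discrete statement in Proposition \ref{2a}, replacing the classical Bass--Serre tree by the standard profinite tree and the Stallings fixed-point property by the profinite $OE$-property. First I would transport the action: let $\varphi\colon G\to B$ be the given isomorphism and let $B$ act on its standard profinite tree $S=S(B)=B/B_1\cup B/B_2\cup B/K$, whose vertex stabilizers are the conjugates of $B_1,B_2$ and whose edge stabilizers are the conjugates of the finite group $K$. Pulling this action back along $\varphi$ makes $G$ act on $S$ with finite edge stabilizers; equivalently, writing $A_i=\varphi(G_i)$ and $L=\varphi(H)$, one obtains a second amalgamated decomposition $B=A_1\amalg_L A_2$ over the finite group $L$ with $OE$ vertex groups $A_1,A_2$.

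Since $G_1,G_2$ are $OE$, so are $A_1,A_2$; acting on $S$ with finite edge stabilizers they must each fix a vertex, so $A_1\le B_{i_1}^{\,b_1}$ and $A_2\le B_{i_2}^{\,b_2}$ for suitable $i_1,i_2\in\{1,2\}$ and $b_1,b_2\in B$. Running the symmetric argument for $\varphi^{-1}$, i.e.\ letting $B_1,B_2$ act on the standard tree of $A_1\amalg_L A_2$, gives the reverse containments $B_1\le A_{j_1}^{\,c_1}$, $B_2\le A_{j_2}^{\,c_2}$. The engine for upgrading containments to equalities is Proposition \ref{o421}(ii): if $A_1\le A_1^{\,c}$ then $A_1=A_1\cap A_1^{\,c}$, and unless $c\in A_1$ this intersection lies in a conjugate of the finite edge group $L$, which is impossible because $A_1\neq L$ (non-fictitiousness); hence $c\in A_1$ and the containment is an equality. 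After replacing $\varphi$ by $\tau_{b_1}\circ\varphi$ (and renaming) I may assume $A_1\le B_{i_1}$; relabelling so that the reverse pairing reads $B_1\le A_1^{\,c_1}$ (this relabelling is the allowed interchange of $B_1,B_2$), the chain $A_1\le B_{i_1}$ and $B_1\le A_1^{\,c_1}$ forces $i_1=1$, since $i_1=2$ would yield $B_1\le B_2^{\,c_1}$, whence $B_1\le K^{b}$ by \ref{o421}(ii), a contradiction. Thus $A_1\le B_1\le A_1^{\,c_1}$ gives $A_1=B_1$. The generation condition $B=\langle A_1,A_2\rangle$ then rules out $i_2=1$, and the same sandwiching argument applied inside $A_1\amalg_L A_2$ yields $A_2=B_2^{\,b}$ for some $b\in B$.

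Writing $\psi=\tau_{b_1}\circ\varphi$ (possibly after the swap $B_1\leftrightarrow B_2$) we obtain $\psi(G_1)=B_1$ and $\psi(G_2)=B_2^{\,b}$. The final, most delicate point is to arrange $\psi(H)=K$. Here $\psi(H)=\psi(G_1)\cap\psi(G_2)=B_1\cap B_2^{\,b}=:L'$ is a finite edge stabilizer of $S$, hence conjugate to $K$, and I would correct the remaining discrepancy by an element $n\in N_B(B_1)$: conjugating by such $n$ preserves $\psi(G_1)=B_1$, carries $\psi(G_2)$ to another conjugate of $B_2$, and moves $L'$ to $L'^{\,n}$, using the normalizer description of Proposition \ref{cp1} together with \ref{o421}(ii) to bring $L'$ onto $K$. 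I expect the main obstacle to be exactly this rigidity mechanism: in the profinite setting a mere containment $A_1\le A_1^{\,c}$ of closed subgroups does not by itself force equality, so one genuinely needs the tree-theoretic intersection control of Proposition \ref{o421}(ii) (and the hypotheses $A_i\neq L$, $B_i\neq K$) both to fix the correct pairing of the vertex groups and, at the end, to align the edge groups.
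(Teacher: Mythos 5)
Your overall architecture is sound and is the natural one given the tools in this paper (which itself gives no proof of Proposition \ref{compart1}, quoting it from \cite[Proposition 3.3]{BPZ}): transport the splitting along $\varphi$ to get a second decomposition $B=A_1\amalg_L A_2$, use the $OE$ fixed-point property to get containments of vertex groups in both directions, and upgrade containments to equalities via Proposition \ref{o421}(ii) plus non-fictitiousness. The vertex-group part of your argument is essentially correct, with two points left implicit: the pairing of $\{B_1,B_2\}$ with $\{A_1,A_2\}$ must be a bijection (if both $B_1,B_2$ lay in conjugates of the same $A_i$, your relabelling would be impossible; this is excluded by the same \ref{o421}(ii) argument), and your appeal to ``the generation condition'' to rule out $i_2=1$ is both vague and unnecessary, since $A_2\le B_1^{\,b_2}=A_1^{\,b_2}$ contradicts \ref{o421}(ii) and $A_2\neq L$ directly.

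The genuine gap is the last step. You assert that $L'=\psi(H)=B_1\cap B_2^{\,b}$ ``is a finite edge stabilizer of $S$, hence conjugate to $K$''. Nothing you established implies this: $L'$ is the pointwise stabilizer of the geodesic $[1\cdot B_1,\,b^{-1}B_2]$ in $S$, i.e.\ the intersection of \emph{all} edge stabilizers along it, and Proposition \ref{o421}(ii) only yields the containment $L'\le K^{c}$ for some $c\in B_1$; if that geodesic has length at least $2$ (and nothing forces the two vertices to be adjacent, since $b$ is an arbitrary element produced by the fixed-vertex argument), $L'$ could a priori be strictly smaller than any conjugate of $K$. The missing idea is to run your double-containment scheme on the edge groups as well: $\psi$ carries the splitting of $G$ to a second proper splitting $B=B_1\amalg_{L'}B_2^{\,b}$; the finite group $K\le B_1\cap B_2$ fixes the two vertices $1\cdot B_1$ and $b\cdot B_2^{\,b}$ of the standard tree of this second splitting, which lie in distinct $B$-orbits, so $K$ fixes the geodesic between them and in particular an edge, giving $K\le (L')^{d}$ for some $d\in B$. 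Finiteness then forces $|K|=|L'|$, so $L'\le K^{c}$ is an equality, and composing $\psi$ with $\tau_{c}$ (legitimate since $c\in B_1$, so $B_1$ is preserved and $B_2^{\,b}$ moves to another conjugate of $B_2$) gives $\psi(H)=K$. Your proposed fix by conjugating with some $n\in N_B(B_1)$ via Proposition \ref{cp1} cannot replace this: conjugation only moves $L'$ within its conjugacy class, so one must first know $L'$ is a full conjugate of $K$, which is exactly the point needing proof.
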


\begin{rem} \label{finite-by-cyclic}(\cite[Remark 3.4]{BPZ})
\begin{enumerate}

\item[(i)]
Proposition \ref{2a}  also holds if we assume that  $G_i, B_i$ ($i=1,2)$ are semidirect products   $M_i\rtimes \Z$, $N_i\rtimes \Z$ with  $M_i,N_i$ finite such that $M_i\neq H$, $N_i\neq K$. 

\item[(ii)]
Similarly, Proposition \ref{compart1}  also holds if we assume that $G_i, B_i$ ($i=1,2)$ are semidirect products   $M_i\rtimes \widehat\Z$, $N_i\rtimes \widehat\Z$  with $M_i,N_i$ finite such that $M_i\neq H$, $N_i\neq K$.

\end{enumerate}

\end{rem}

\section{Isomorphism problems for abstract amalgamations} \label{section4}

  In this section we shall estimate the number of isomorphism classes of amalgamated free products $G_1*_HG_2$ of given $OE$-groups $G_1$ and $G_2$ with a finite common subgroup $H$. In fact we investigate two different points of view on $G_1*_HG_2$ separately: as a push-out and as an amalgamation of two given isomorphic finite subgroups of $G_1$ and $G_2$.

\subsection{Isomorphism problem for an abstract amalgamation as a push-out} \label{Isomorphism problem for an abstract amalgamation as a push-out}

Let $\lambda:H\longrightarrow G_1$ and $\mu:H\longrightarrow G_2$ be embeddings. Then the amalgamated free product $G=G_1*_{\lambda, H,\mu} G_2$ is defined by  presentation $G=\langle G_1,G_2\mid rel(G_1), rel(G_2), \lambda(h)=\mu(h), h\in H\rangle$. It can be thought as a push-out  $(\lambda,\mu)$ 
 $$\xymatrix{G_1\ar[r]&G_1*_{\lambda, H,\mu} G_2\\
            H\ar[r]^{\mu}\ar[u]^{\lambda} &G_2\ar[u]}$$

Let $(\eta,\nu)$ 
 $$\xymatrix{G_1\ar[r]&G_1 *_{\eta, H,\nu} G_2\\
            H\ar[r]^{\nu}\ar[u]^{\eta} &G_2\ar[u]}$$
be another push-out.
A morphism of push-out diagrams $(\eta,\nu)\longrightarrow (\lambda,\mu)$  is  a triple $(\beta_1,\beta_2,\alpha)$ of homomorphisms $\beta_1:G_1\longrightarrow G_1$, $\beta_2:G_2\longrightarrow G_2$, $\alpha:H\longrightarrow H$ such that 

\begin{equation}\label{morphism of pushouts}
\lambda\alpha=\beta_1\eta, \mu\alpha=\beta_2\nu;
\end{equation}
or equivalently 
\begin{equation}\label{pushout morphism}
\lambda^{-1}\beta_1\eta(h)=\mu^{-1}\beta_2\nu(h)=\alpha(h),
\end{equation} 
for each $h\in H$. Clearly, isomorphism of push-outs induces the isomorphism of the corresponding  amalgamated free products.

The next theorem is concerned with the isomorphism problem for  free products of $OE$-groups with finite amalgamation. 

\begin{thm}\label{6} 
Let  be $\lambda,\eta:H\longrightarrow G_1$, $\mu, \nu: H\longrightarrow G_2$ be monomorphisms of a finite group $H$  into $OE$-groups  $G_1$, $G_2$. Let  $G=G_1*_{\lambda, H,\mu} G_2$  and $B=G_1*_{\eta, H, \nu} G_2$ be  amalgamated free products.  Then $G\cong B$ if and only if (up to possibly interchanging  $G_1, G_2$ in case they are isomorphic) there exists an isomorphism of push-outs $(\beta_1,\beta_2,\alpha):(\eta,\nu)\longrightarrow (\lambda,\mu)$, $\beta_{1} \in Aut(G_1)$,  $\beta_{2} \in Aut(G_2)$ and $\alpha \in Aut(H)$.
\end{thm}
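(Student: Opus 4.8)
The forward implication (that an isomorphism of push-outs yields $G\cong B$) is immediate and is the remark already made before the statement: a morphism of push-outs $(\beta_1,\beta_2,\alpha)$ provides two compatible families $\beta_1,\beta_2$ which, by the universal property of the amalgamated free product, induce a homomorphism $G\to B$; applying the same construction to the inverse triple gives a two-sided inverse, so $G\cong B$. The whole content is therefore in the converse, and my plan is to feed the abstract isomorphism $G\cong B$ into Proposition \ref{2a} so as to reduce the problem to an elementary diagram chase.

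Since $G_1,G_2$ are $OE$-groups and both amalgams have the \emph{same} vertex groups $G_1,G_2$, Proposition \ref{2a} supplies an isomorphism $\psi\colon G\to B$ carrying the amalgamated subgroup of $G$ onto that of $B$ and the copy of $G_1$ (resp. $G_2$) in $G$ onto the copy of $G_1$ (resp. $G_2$) in $B$, possibly after interchanging the two factors of $B$. I will first treat the non-interchanged case and record the interchange at the end. Write $\iota_1,\iota_2$ for the canonical embeddings of $G_1,G_2$ into $G$ and $\jmath_1,\jmath_2$ for those into $B$, so that $\iota_1\lambda=\iota_2\mu$ is the edge inclusion in $G$ and $\jmath_1\eta=\jmath_2\nu$ the edge inclusion in $B$. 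Because $\psi$ maps $\iota_i(G_i)$ onto $\jmath_i(G_i)$, the formulas $\beta_i:=\jmath_i^{-1}\psi\iota_i$ define automorphisms $\beta_1\in Aut(G_1)$, $\beta_2\in Aut(G_2)$ with $\psi\iota_i=\jmath_i\beta_i$.

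Next I would produce $\alpha$. Applying $\psi$ to the edge group $\iota_1\lambda(H)$ of $G$ and using $\psi\iota_1=\jmath_1\beta_1$ together with injectivity of $\jmath_1$ yields $\beta_1\lambda(H)=\eta(H)$; hence $\alpha:=\eta^{-1}\beta_1\lambda$ is a well-defined injective endomorphism of the finite group $H$, i.e. an automorphism, satisfying $\eta\alpha=\beta_1\lambda$. This is the first push-out equation of (\ref{morphism of pushouts}) for the triple $(\beta_1,\beta_2,\alpha)\colon(\lambda,\mu)\to(\eta,\nu)$. The only genuine computation is the second equation $\nu\alpha=\beta_2\mu$: applying $\psi$ to $\iota_1\lambda=\iota_2\mu$ gives $\jmath_1\beta_1\lambda=\jmath_2\beta_2\mu$, and substituting $\beta_1\lambda=\eta\alpha$ and $\jmath_1\eta=\jmath_2\nu$ produces $\jmath_2\nu\alpha=\jmath_2\beta_2\mu$, whence $\nu\alpha=\beta_2\mu$ by injectivity of $\jmath_2$. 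Thus $(\beta_1,\beta_2,\alpha)$ is an isomorphism of push-outs, and inverting the triple (legitimate since all three maps are automorphisms) yields the desired morphism $(\eta,\nu)\to(\lambda,\mu)$ of the theorem.

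Finally, if Proposition \ref{2a} only provides $\psi$ after swapping the two factors of $B$, which can occur only when $G_1\cong G_2$, I would compose $\psi$ with a fixed isomorphism $G_1\cong G_2$ on each factor; the same computation then produces an isomorphism of push-outs after the interchange of $G_1$ and $G_2$ permitted in the statement. The main obstacle is concentrated entirely in invoking Proposition \ref{2a} correctly: once $\psi$ is known to respect the amalgamated decomposition, the argument is the routine diagram chase above, the only delicate points being the well-definedness of $\alpha$ and the verification of the second equation, both of which rest on the edge-group compatibilities $\iota_1\lambda=\iota_2\mu$ and $\jmath_1\eta=\jmath_2\nu$.
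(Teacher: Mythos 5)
Your proposal is correct and follows essentially the same route as the paper: both directions hinge on Proposition \ref{2a} to obtain an isomorphism respecting the decomposition (vertex groups to vertex groups, edge group to edge group), after which restricting to the factors gives $\beta_1,\beta_2$ and the diagram chase produces $\alpha=\eta^{-1}\beta_1\lambda=\nu^{-1}\beta_2\mu$. The only differences are cosmetic: you run $\psi$ from $G$ to $B$ and then invert the resulting triple, whereas the paper takes $\psi\colon B\to G$ and gets the morphism $(\eta,\nu)\to(\lambda,\mu)$ directly.
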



\begin{proof} $(\Leftarrow)$ Follows directly from the fact that  an isomorphism of push-outs yields an isomorphism of the amalgamated free products. 

\medskip
$(\Rightarrow)$ By Proposition \ref{2a} there is an isomorphism $\psi:B\longrightarrow G$ such that $\psi(G_i)=G_i, i=1,2$ (up to renumeration of $G_1$ and $G_2$ if $G_1\cong G_2$). Define $\beta_{i}=\psi|_{G_i}: G_i \longrightarrow G_i$. Then for all $h\in H$, we have
$\beta_{2}(\nu(h))=\psi(\nu(h))=\psi(\eta(h))=\beta_{1}\eta(h)=\lambda(\lambda^{-1}\beta_{1}\eta)(h)=\mu(\lambda^{-1}\beta_{1}\eta)(h)$, so $\lambda^{-1}\beta_1\eta=\mu^{-1}\beta_2\nu=\alpha$, for $\alpha\in Aut(H)$ as required.  
\end{proof}

 Now we shall calculate the number of isomorphism classes of $G_1*_HG_2$  for groups $G_1,G_2,H$ satisfying the hypothesis of this section. We shall split up our consideration into two cases: $G_1\not\cong G_2$ and $G_1\cong G_2$. 
 
Let  $Inj(H,G_i)$ be the set of monomorphisms of $H$ into $G_i\,(i=1,2)$. Then $Aut(H)$ acts on $Inj(H,G_i)$ naturally on the right by $\varphi \cdot\alpha=\varphi\alpha$, $\varphi\in Inj(H,G_i)$, $\alpha\in Aut(H)$. The automorphism group $Aut(G_i)$ acts naturally on $Inj(H,G_i)$ on the left $\beta \cdot \varphi=\beta\varphi$, $\beta\in Aut(G_i)$. 

\bigskip
Then  $Inj(H,G_1)\times Inj(H,G_2)$ gives the set  of push-outs 

$$\xymatrix{G_1\ar[r]&G_1*_H G_2\\
            H\ar[r]^{}\ar[u]^{} &G_2\ar[u]}$$ on which $Aut(G_1)\times Aut(G_2)$ acts on the left by the action given by $(\beta_1, \beta_2)\cdot (\lambda, \mu)=(\beta_1\lambda, \beta_2\mu)$ and we can define the diagonal action of $Aut(H)$ on it on the right  by $(\lambda,\mu)\alpha=(\lambda\alpha, \mu\alpha)$, $\alpha\in Aut(H)$, $\beta_i \in Aut(G_i)\,(i=1,2).$ 
            
\subsubsection{$G_1\not\cong G_2$}          

\bigskip
\begin{prop}\label{pushout orbit}            
            Suppose  that  $G_1\not\cong G_2$. Then push-out diagrams 
            $$\xymatrix{G_1\ar[r]&G_1 *_{\lambda, H, \mu} G_2\\
            H\ar[r]^{\mu}\ar[u]^{\lambda} &G_2\ar[u]} \hspace{3cm}
\xymatrix{G_1\ar[r]&G_1 *_{\eta,H,\nu} G_2\\
            H\ar[r]^{\nu}\ar[u]^{\eta} &G_2\ar[u]}$$ are in different $(Aut(G_1)\times Aut(G_2), Aut(H))$-orbits if and only if they are not isomorphic if an only if $G_1*_{\lambda, H, \mu}G_2\not\cong G_1*_{\eta, H, \nu}G_2$.\end{prop}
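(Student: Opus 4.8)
The plan is to reduce the whole statement to Theorem \ref{6} together with a purely formal identification of orbits with isomorphism classes of push-outs. Since all three conditions in the proposition are negations, I would instead prove the positive chain ``$(\lambda,\mu)$ and $(\eta,\nu)$ lie in the same $(Aut(G_1)\times Aut(G_2),Aut(H))$-orbit'' $\iff$ ``they are isomorphic as push-out diagrams'' $\iff$ ``$G_1*_{\lambda,H,\mu}G_2\cong G_1*_{\eta,H,\nu}G_2$'', and then take negations throughout to recover the stated equivalences of the negative conditions.

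For the first equivalence I would simply unwind the definitions of the two actions. By the definition of the left action of $Aut(G_1)\times Aut(G_2)$ and the diagonal right action of $Aut(H)$, the pair $(\lambda,\mu)$ lies in the orbit of $(\eta,\nu)$ precisely when there are $\beta_i\in Aut(G_i)$ and $\alpha\in Aut(H)$ with $\lambda=\beta_1\eta\alpha$ and $\mu=\beta_2\nu\alpha$. Replacing $\alpha$ by $\alpha^{-1}$ rewrites these as $\lambda\alpha=\beta_1\eta$ and $\mu\alpha=\beta_2\nu$, which are exactly the defining equations \eqref{morphism of pushouts} of a push-out morphism $(\eta,\nu)\to(\lambda,\mu)$ with $\beta_i,\alpha$ automorphisms. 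Such a triple is automatically invertible, with inverse $(\beta_1^{-1},\beta_2^{-1},\alpha^{-1})$, so ``lying in the same orbit'' and ``being isomorphic as push-outs'' are literally the same (symmetric, transitive) relation. Negating gives the first ``if and only if''.

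For the second equivalence I would invoke Theorem \ref{6}. The implication ``isomorphic push-outs $\Rightarrow$ isomorphic amalgamated products'' is the $(\Leftarrow)$ direction there: an isomorphism of push-outs induces an isomorphism of the corresponding amalgamated free products. Conversely, the $(\Rightarrow)$ direction of Theorem \ref{6} produces, from an isomorphism $G_1*_{\lambda,H,\mu}G_2\cong G_1*_{\eta,H,\nu}G_2$, an isomorphism of push-outs $(\beta_1,\beta_2,\alpha)$ with $\beta_i\in Aut(G_i)$ and $\alpha\in Aut(H)$. Chaining this with the first equivalence and negating yields the proposition.

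The one place where the hypothesis $G_1\not\cong G_2$ is genuinely used, and the main point to verify, is the elimination of the ``interchanging $G_1,G_2$'' clause present in Theorem \ref{6}. The isomorphism produced there restricts to $\beta_i=\psi|_{G_i}$, and a priori $\psi$ could send $G_1$ onto $G_2$ and conversely; but such a swap would exhibit an isomorphism $G_1\cong G_2$, which is excluded by assumption. Hence $\psi(G_i)=G_i$ and the $\beta_i$ are genuine automorphisms of $G_i$ rather than isomorphisms between the two distinct factors, so the push-out morphism obtained is of the required form. Everything else is formal unwinding, so I expect no further obstacle.
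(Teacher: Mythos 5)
Your proposal is correct and follows essentially the same route as the paper: the paper's proof is exactly the application of Theorem \ref{6} together with the unwinding of the orbit condition $(\lambda,\mu)=(\beta_1\eta\alpha,\beta_2\nu\alpha)$ into the push-out morphism equation \eqref{pushout morphism}. Your additional remarks (invertibility of the triple $(\beta_1,\beta_2,\alpha)$ and the explicit use of $G_1\not\cong G_2$ to rule out the interchange clause of Theorem \ref{6}) merely make explicit what the paper leaves implicit.
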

            
            \begin{proof}
            By Theorem \ref{6} $G_1*_{\lambda, H, \mu}G_2\cong G_1*_{\eta, H, \nu}G_2$ if and only if $(\lambda,\mu)=(\beta_1\eta\alpha, \beta_2\nu\alpha)$ for some $\beta_i\in Aut(G_i)$, $\alpha\in Aut(H)$ if and only if    $\lambda^{-1}\beta_1\eta(h)=\alpha^{-1}(h)=\mu^{-1}\beta_2\nu(h)$, for all $h \in H$ by equation (\ref{pushout morphism}).\end{proof}
     
     \medskip       
 Thus one deduces the following 
            
\begin{cor}\label{isomorphisms number} Let $H$ be a finite group and $G_1 \not \cong G_2$ be $OE$-groups. Then the number of isomorphism classes of amalgamated free products $G_1*_HG_2$   is $$|Aut(G_1)\times Aut(G_2)\backslash Inj(H,G_1)\times Inj(H,G_2)/Aut(H)|,$$ where $Aut(H)$ acts diagonally.
            
            \end{cor}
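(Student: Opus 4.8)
The plan is to realize the desired count as an orbit-counting statement and reduce it to the isomorphism criterion already established in Theorem~\ref{6}, specialized to the case $G_1\not\cong G_2$ via Proposition~\ref{pushout orbit}. First I would recall that each amalgamated free product $G_1*_HG_2$ (as a push-out) is encoded by a pair $(\lambda,\mu)\in Inj(H,G_1)\times Inj(H,G_2)$, so the set of all such push-outs is precisely $Inj(H,G_1)\times Inj(H,G_2)$. The group $Aut(G_1)\times Aut(G_2)$ acts on the left by $(\beta_1,\beta_2)\cdot(\lambda,\mu)=(\beta_1\lambda,\beta_2\mu)$ and $Aut(H)$ acts diagonally on the right by $(\lambda,\mu)\cdot\alpha=(\lambda\alpha,\mu\alpha)$; these two actions commute, so together they give a well-defined action of $\big(Aut(G_1)\times Aut(G_2)\big)\times Aut(H)$, and the orbit set is exactly the double-coset space appearing in the statement.

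The heart of the argument is that two push-outs yield isomorphic groups \emph{if and only if} they lie in the same orbit. This is exactly the content of Proposition~\ref{pushout orbit}: for $G_1\not\cong G_2$, the relation $G_1*_{\lambda,H,\mu}G_2\cong G_1*_{\eta,H,\nu}G_2$ holds iff $(\lambda,\mu)$ and $(\eta,\nu)$ are in the same $\big(Aut(G_1)\times Aut(G_2),Aut(H)\big)$-orbit. Thus I would invoke Proposition~\ref{pushout orbit} directly to establish a bijection between isomorphism classes of amalgamated free products $G_1*_HG_2$ and orbits of the combined action on $Inj(H,G_1)\times Inj(H,G_2)$. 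Counting the isomorphism classes therefore amounts to counting these orbits.

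Finally, I would identify the orbit set with the stated double coset space. Since the left action of $Aut(G_1)\times Aut(G_2)$ and the right diagonal action of $Aut(H)$ commute, the set of orbits is canonically
$$Aut(G_1)\times Aut(G_2)\,\backslash\, Inj(H,G_1)\times Inj(H,G_2)\,/\,Aut(H),$$
whose cardinality is the number in the statement. The only point requiring care---and the step I expect to be the main obstacle, though here it is already resolved upstream---is the hypothesis $G_1\not\cong G_2$: it is precisely this assumption that rules out the extra identifications coming from an isomorphism swapping $G_1$ and $G_2$, so that no further quotient by a symmetry is needed. With $G_1\not\cong G_2$ in force, Proposition~\ref{pushout orbit} applies cleanly and the corollary follows immediately from combining the bijection of the previous paragraph with the identification of orbits as double cosets.
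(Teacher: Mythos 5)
Your proposal is correct and follows exactly the paper's own route: the paper also deduces this corollary directly from Proposition~\ref{pushout orbit}, identifying isomorphism classes of the amalgamated free products $G_1*_HG_2$ with orbits of the commuting left $Aut(G_1)\times Aut(G_2)$-action and right diagonal $Aut(H)$-action on $Inj(H,G_1)\times Inj(H,G_2)$, i.e.\ with the stated double coset space. Your remark on the role of the hypothesis $G_1\not\cong G_2$ (excluding the swap symmetry) matches the paper's treatment, which handles the case $G_1\cong G_2$ separately with the wreath product $Aut(G)\wr C_2$.
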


\subsubsection{ $G_1\cong G_2$}

Now we consider the case when $G_1\cong G_2$. Then we can think of $G_1, G_2$ as the same group $G$. 

            
            
           





 Consider now the set $Inj(H,G)$. Then as before $Aut(H)$ acts on the set of push-outs $(\lambda,\mu)$ diagonally and the wreath product $W=Aut(G) \wr C_2$ acts on the set of push-outs $(\lambda,\mu)$ on the left, with $C_2$ permuting $\lambda$ and $\mu$.
Hence Proposition \ref{pushout orbit} in this case reads as follows.

\begin{prop}\label{symmetric pushout orbits} Let $H$ be a finite subgroup of an $OE$-group $G$. Then push-outs $(\lambda,\mu)$ and $(\eta,\nu)$ are in different $(Aut(G)\wr C_2, Aut(H))$-orbits if and only if they are not isomorphic if and only if $G*_{\lambda, H, \mu}G\not\cong G*_{\eta, H, \nu}G$.
\end{prop}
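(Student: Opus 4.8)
The plan is to reduce everything to Theorem \ref{6} together with the explicit description of the $W=Aut(G)\wr C_2$ action, running exactly parallel to the proof of Proposition \ref{pushout orbit} but keeping track of the extra freedom to interchange the two copies of $G$. Writing $W=(Aut(G)\times Aut(G))\rtimes C_2$ with the nontrivial element $\sigma\in C_2$ acting by $\sigma\cdot(\lambda,\mu)=(\mu,\lambda)$, a group element sends $(\eta,\nu)$ either to $(\beta_1\eta,\beta_2\nu)$ (trivial coset) or to $(\beta_1\nu,\beta_2\eta)$ (the $\sigma$-coset) for suitable $\beta_1,\beta_2\in Aut(G)$, while $Aut(H)$ acts diagonally on the right by $(\lambda,\mu)\alpha=(\lambda\alpha,\mu\alpha)$. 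The cleanest route is to prove the outer equivalence in contrapositive form, i.e. to show $G*_{\lambda,H,\mu}G\cong G*_{\eta,H,\nu}G$ precisely when $(\lambda,\mu)$ and $(\eta,\nu)$ lie in the same $(W,Aut(H))$-orbit, and to observe separately that the middle condition is just the definition of push-out isomorphism in this symmetric setting.

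First I would settle the middle equivalence. In the symmetric case a push-out isomorphism is, by construction, a triple $(\beta_1,\beta_2,\alpha)$ realizing one diagram from the other possibly after interchanging the two copies of $G$; unwinding the identities \eqref{morphism of pushouts}/\eqref{pushout morphism}, this is literally the assertion that some element of $W$ and some $\alpha\in Aut(H)$ carry $(\eta,\nu)$ to $(\lambda,\mu)$. Hence ``isomorphic as push-outs'' and ``in the same $(W,Aut(H))$-orbit'' are the same statement, and the contrapositive gives the first ``if and only if''.

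Next I would establish the outer equivalence via Theorem \ref{6}, splitting into the two cases provided by its ``up to interchanging $G_1,G_2$'' clause. In the case without interchange, \eqref{pushout morphism} gives $\lambda\alpha=\beta_1\eta$ and $\mu\alpha=\beta_2\nu$, so that $(\lambda,\mu)=(\beta_1,\beta_2)\cdot(\eta,\nu)\cdot\alpha^{-1}$, i.e.\ membership in the trivial-coset orbit. In the case with interchange, $\psi$ carries the first factor of the source onto the second factor of the target and vice versa; writing $\beta_1=\psi|_{G^{(1)}}$ and $\beta_2=\psi|_{G^{(2)}}$ and imposing $\psi(\eta(h))=\psi(\nu(h))$ on the amalgamated subgroup yields the relations $\mu\alpha=\beta_1\eta$ and $\lambda\alpha=\beta_2\nu$, which rearrange to $(\lambda,\mu)=(\beta_2,\beta_1)\cdot\sigma\cdot(\eta,\nu)\cdot\alpha^{-1}$, i.e.\ membership in the $\sigma$-coset orbit. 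Conversely, any element of $W$ together with an element of $Aut(H)$ taking one push-out to the other produces, by the easy $(\Leftarrow)$ direction of Theorem \ref{6}, an isomorphism of the amalgamated products (swapping the two factors exactly when the $C_2$-component is nontrivial). Combining the two cases gives the second ``if and only if''.

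The only delicate point, and the step I would check most carefully, is the bookkeeping in the interchange case: one must verify that the swap $\sigma$ matches the ``interchange $G_1,G_2$'' clause of Theorem \ref{6} with the automorphisms $\beta_1,\beta_2$ attached on the correct side and in the correct order, so that the compatibility relations come out as $\mu\alpha=\beta_1\eta$, $\lambda\alpha=\beta_2\nu$ rather than with the roles of $\beta_1,\beta_2$ (or of $\lambda,\mu$, or of $\alpha$ and $\alpha^{-1}$) transposed. Since $\beta_1,\beta_2$ range over all of $Aut(G)$ and $\alpha$ over all of $Aut(H)$, such a transposition is harmless for the orbit statement, but I would record the relabeling explicitly so that the passage from the push-out identities to membership in the $\sigma$-coset is unambiguous. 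Everything else is a direct transcription of the non-symmetric argument of Proposition \ref{pushout orbit}.
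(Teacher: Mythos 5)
Your proposal is correct and is essentially the paper's own argument: the paper gives no written proof, stating only that Proposition \ref{pushout orbit} ``reads as'' this statement once the wreath product $Aut(G)\wr C_2$ replaces $Aut(G_1)\times Aut(G_2)$, which is exactly your reduction to Theorem \ref{6} via equation \eqref{pushout morphism}. Your explicit bookkeeping of the interchange case (the $\sigma$-coset relations $\mu\alpha=\beta_1\eta$, $\lambda\alpha=\beta_2\nu$) simply fills in the details the paper leaves implicit, and it is carried out correctly.
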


 Corollary \ref{isomorphisms number} in this case reads as follows.

\begin{cor}\label{symmetrical isomorphisms number} Let $H$ be a finite subgroup of an $OE$-group $G$. Then the  number of isomorphism classes of amalgamated free products $G*_H\widetilde{G}$ (with $\widetilde{G}$ being a copy of $G$)  is $$|(Aut(G)\wr C_2)\backslash (Inj(H,G)\times Inj(H,G))/Aut(H)|,$$
 where $Aut(H)$ acts diagonally.
            
            \end{cor}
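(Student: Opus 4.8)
The plan is to deduce this corollary formally from Proposition \ref{symmetric pushout orbits}, in which the substantive work has already been done; what remains is a counting translation. Concretely, I want to identify the set of isomorphism classes of the products $G*_H\widetilde G$ with the orbit set of the two-sided action of $Aut(G)\wr C_2$ (on the left) and $Aut(H)$ (on the right) on $Inj(H,G)\times Inj(H,G)$, and then observe that this orbit set is by definition the displayed double coset space.

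First I would note that the assignment $(\lambda,\mu)\mapsto G*_{\lambda,H,\mu}\widetilde G$ is a surjection from $Inj(H,G)\times Inj(H,G)$ onto the set of isomorphism classes of amalgamations of the form $G*_H\widetilde G$: since $H$ is a finite subgroup of $G$ the set $Inj(H,G)$ is nonempty, every such product arises by choosing a pair of embeddings of $H$ into the two copies of $G$, and identifying $\widetilde G$ with $G$ allows me to regard the second embedding as an element of $Inj(H,G)$ as well. Hence it suffices to describe exactly when two pairs give isomorphic products.

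The one point I would check with care is that the left $Aut(G)\wr C_2$-action and the right $Aut(H)$-action commute, so that the two-sided action is well defined and its orbits are precisely the double cosets in $(Aut(G)\wr C_2)\backslash(Inj(H,G)\times Inj(H,G))/Aut(H)$. For an element $((\beta_1,\beta_2),\sigma)$ of the wreath product this is immediate when $\sigma$ is trivial; when $\sigma$ is the transposition, applying the swap and the $\beta_i$ on the left and then the diagonal $\alpha$ on the right yields $(\beta_1\mu\alpha,\beta_2\lambda\alpha)$, and so does performing these in the opposite order, precisely because the right action is diagonal and therefore commutes with the permutation of the two coordinates. This is the only place where the $C_2$ factor of the wreath product demands attention.

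Finally I would invoke Proposition \ref{symmetric pushout orbits}: two push-outs $(\lambda,\mu)$ and $(\eta,\nu)$ lie in the same $(Aut(G)\wr C_2,Aut(H))$-orbit if and only if $G*_{\lambda,H,\mu}G\cong G*_{\eta,H,\nu}G$. Combined with the surjectivity established in the first step, this identifies the isomorphism classes of $G*_H\widetilde G$ with the orbits of the two-sided action, whose cardinality is by definition $|(Aut(G)\wr C_2)\backslash(Inj(H,G)\times Inj(H,G))/Aut(H)|$, as required. I do not anticipate a genuine obstacle: the only real bookkeeping is the verification that the two actions commute, and everything else is a direct reformulation of the already-proved Proposition \ref{symmetric pushout orbits}.
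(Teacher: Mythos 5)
Your proposal is correct and follows essentially the same route as the paper, which deduces the corollary directly from Proposition \ref{symmetric pushout orbits} by identifying isomorphism classes with orbits of the left $Aut(G)\wr C_2$-action and right diagonal $Aut(H)$-action on $Inj(H,G)\times Inj(H,G)$. Your explicit verification that the two actions commute (so the double quotient is unambiguous) is a detail the paper leaves implicit, but it is the same argument.
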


\begin{rem} \label{finite-by-cyclic isomorphism problems} The results of Subsection \ref{Isomorphism problem for an abstract amalgamation as a push-out} also hold if we assume that  $G_i$ \, ($i=1,2)$ are semidirect products $M_i\rtimes \Z$ with $M_i$ finite such that $M_i\neq H$.  
This follows from Remark \ref{finite-by-cyclic} (i).

\end{rem}

\subsection{Amalgamating given subgroups}\label{given subgroups}

In this subsection we consider the isomorphism problem of an amalgamated free product viewed as two given isomorphic subgroups $H_1,H_2$ of $G_1,G_2$ being amalgamated.
 But then in  Proposition \ref{pushout orbit} the images $H_1$ and $H_2$ of $H$ in $G_i\,(i=1,2)$ are fixed and so $Inj(H,G_1) \times Inj(H,G_2)$ are replaced by $Inj(H,H_1)\times Inj(H,H_2)\cong Aut(H)\times Aut(H)$ and $Aut(G_i)$ are replaced by $\overline{Aut}_{G_i}(H)$.  Note however, that  $\overline{Aut}_{G_i}(H)$ depends on the embedding  of $H$ into $G_i$. We identify   $H$ with $H_1$ in this section and denote by $\mu:H\longrightarrow H_2$ an isomorphism of $H$ to $H_2$. 
If $\beta_2 \in Aut_{G_2}(H_2)$ then $\overline{\beta_2}=\beta_2^{\mu} \in Aut(H)$ because  $\overline{\beta_2}(h)=(\mu^{-1} \beta_2 \mu)(h) \in H$ for each $h \in H$. Denote by $$\overline{Aut}^{\mu}_{G_2}(H)=\{\overline{\beta}=\beta^{\mu}| \beta \in Aut_{G_2}(H_2) \}$$ the image of $Aut_{G_2}(H_2)$ in $Aut(H)$. Then $\overline{Aut}^{\mu}_{G_2}(H)$ acts on $H$.
 
 Now by Propositions \ref{pushout orbit} and \ref{symmetric pushout orbits}  the diagonal action of $Aut(H)$ on $$Inj(H,H_1)\times Inj(H,H_2)\cong Aut(H)\times Aut(H)$$ does not change the amalgamated free product $G_1*_{H,\mu} G_2$ up to isomorphism.

 \begin{lem}\label{mod diagonal} There is a natural bijection from $$\overline{Aut}_{G_1}(H)\times \overline{Aut}^{\mu}_{G_2}(H)\backslash Aut(H)\times Aut(H)/Aut(H)$$ to the double cosets $\overline{Aut}^{\mu}_{G_2}(H)\backslash Aut(H)/\overline{Aut}_{G_1}(H)$.
 \end{lem}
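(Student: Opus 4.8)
The plan is to strip the statement down to a purely group-theoretic fact inside $Aut(H)$. Write $P=\overline{Aut}_{G_1}(H)$ and $Q=\overline{Aut}^{\mu}_{G_2}(H)$, both regarded as subgroups of $A:=Aut(H)$, and identify $Inj(H,H_1)\times Inj(H,H_2)$ with $A\times A$ via the fixed isomorphisms (so $\mu$ identifies $Inj(H,H_2)$ with $A$). Under this identification the left $(P\times Q)$-action is $(\beta_1,\beta_2)\cdot(x,y)=(\beta_1 x,\beta_2 y)$ and the right diagonal $Aut(H)$-action is $(x,y)\cdot\alpha=(x\alpha,y\alpha)$; since left and right multiplications commute, the two actions commute and the iterated quotient is well defined. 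The key observation I would exploit is that the right diagonal action is free and that each of its orbits has a canonical normal form.

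Concretely, I would define $\Phi\colon A\times A\longrightarrow A$ by $\Phi(x,y)=yx^{-1}$. A one-line computation shows $\Phi$ is constant on right diagonal orbits, since $\Phi(x\alpha,y\alpha)=(y\alpha)(x\alpha)^{-1}=y\alpha\alpha^{-1}x^{-1}=yx^{-1}$. Moreover every right orbit meets the slice $\{1\}\times A$ in exactly one point: applying $\alpha=x^{-1}$ carries $(x,y)$ to $(1,yx^{-1})=(1,\Phi(x,y))$, and if $(1,z)\cdot\alpha=(1,z')$ then $\alpha=1$ and $z=z'$, which also records that the action is free. Hence $\Phi$ induces a bijection between the set of right diagonal orbits $A\times A/A$ and $A$, sending the orbit of $(x,y)$ to $yx^{-1}$.

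It then remains to transport the residual left $(P\times Q)$-action through $\Phi$. Here I would compute, for $\beta_1\in P$ and $\beta_2\in Q$,
$$\Phi(\beta_1 x,\beta_2 y)=(\beta_2 y)(\beta_1 x)^{-1}=\beta_2\,(yx^{-1})\,\beta_1^{-1}=\beta_2\,\Phi(x,y)\,\beta_1^{-1}.$$
Thus on the normal form $z=\Phi(x,y)$ the surviving action is $z\mapsto\beta_2 z\beta_1^{-1}$, and as $\beta_1$ runs over $P$ so does $\beta_1^{-1}$; therefore its orbits are precisely the double cosets $Qz P$, i.e.\ the elements of $\overline{Aut}^{\mu}_{G_2}(H)\backslash Aut(H)/\overline{Aut}_{G_1}(H)$. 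Consequently $\Phi$ descends to the asserted bijection. The entire content lies in these two short computations; the only point requiring care is verifying that $\Phi$ respects orbits in \emph{both} directions, namely that two pairs have the same image in $Q\backslash A/P$ if and only if they lie in a common $(P\times Q,\,A)$-orbit, which is exactly what the freeness of the right diagonal action together with the transformation rule $z\mapsto\beta_2 z\beta_1^{-1}$ provide.
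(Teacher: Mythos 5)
Your proposal is correct and follows essentially the same route as the paper: the paper also identifies $(Aut(H)\times Aut(H))/\Delta$ with $Aut(H)$ (via the observation that $(h,1)$ is identified with $(1,h^{-1})$, i.e.\ your normal form $yx^{-1}$), under which the left action of $\overline{Aut}_{G_1}(H)\times\overline{Aut}^{\mu}_{G_2}(H)$ becomes a left $\overline{Aut}^{\mu}_{G_2}(H)$-action and a right $\overline{Aut}_{G_1}(H)$-action, giving the double cosets. Your write-up merely makes explicit (via $\Phi$, freeness, and the slice $\{1\}\times A$) the verifications the paper leaves implicit.
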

 
\begin{proof}  Note that the factorization of $Aut(H)\times Aut(H)$ modulo the  diagonal $\Delta$ identifies $(h,1)$ with $(1,h^{-1})$. So  $(Aut(H)\times Aut(H))/\Delta$  with  $\overline{Aut}_{G_1}(H)\times \overline{Aut}^{\mu}_{G_2}(H)$ acting on the left on it can be viewed as $Aut(H)$ with $\overline{Aut}^{\mu}_{G_2}(H)$ acting on the left on it and $\overline{Aut}_{G_1}(H)$ acting on the right on it. The result follows.

\end{proof} 
 
\begin{lem}\label{troca} Let $\mu\in Inj(H, G_2)$and $\gamma:G_1\rightarrow G_2$ be an isomorphism  that sends $H$ to $\mu(H)$. Then for any $\alpha \in Aut(H)$ there exist  an isomorphism $\theta:G= G_1 *_{ H,\mu\alpha}G_2 \rightarrow B= G_1 *_{ H, \gamma(\mu\alpha)^{-1}\gamma} G_2$ that maps $G_1$ onto $G_2$ and $G_2$ onto $G_1$.
 \end{lem}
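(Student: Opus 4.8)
The plan is to construct $\theta$ directly from the universal property of the push-out, letting it act by $\gamma$ on the factor $G_1$ and by $\gamma^{-1}$ on the factor $G_2$ so that it interchanges the two vertex groups, and then to exhibit an explicit two-sided inverse built by the same recipe. This is the natural way to realize a factor-swap isomorphism, and it avoids any appeal to Bass--Serre theory since everything is abstract.

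To fix notation I would write $\psi:=\mu\alpha\in Inj(H,G_2)$, so that $\psi(H)=\mu(H)=\gamma(H)$ and $G=G_1*_{H,\psi}G_2$, while $B=G_1*_{H,\nu}G_2$ with $\nu:=\gamma(\mu\alpha)^{-1}\gamma=\gamma\psi^{-1}\gamma$ (the $\psi^{-1}$ being read on $\psi(H)=\gamma(H)$, so that $\nu$ is a monomorphism $H\to G_2$ with image $\gamma(H)=\mu(H)$); in both push-outs the $G_1$-side embedding is the fixed inclusion $\lambda\colon H\hookrightarrow G_1$. I then define $\theta$ by $\theta|_{G_1}=\gamma\colon G_1\to G_2\subseteq B$ and $\theta|_{G_2}=\gamma^{-1}\colon G_2\to G_1\subseteq B$. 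By the universal property of the amalgamated free product $G$, these two maps glue to a homomorphism $\theta\colon G\to B$ precisely when $\theta(\lambda(h))=\theta(\psi(h))$ in $B$ for every $h\in H$, i.e. when $\gamma(h)=\gamma^{-1}(\psi(h))$ holds in $B$.

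The verification of this single compatibility condition is the heart of the argument, and it is a one-line substitution. Since $\gamma^{-1}(\psi(h))\in\gamma^{-1}(\mu(H))=H=\lambda(H)\subseteq G_1\subseteq B$, the defining amalgamation relation of $B$ identifies it with $\nu(\gamma^{-1}\psi(h))\in G_2$, and inserting $\nu=\gamma\psi^{-1}\gamma$ gives $\nu(\gamma^{-1}\psi(h))=\gamma\psi^{-1}\gamma\gamma^{-1}\psi(h)=\gamma\psi^{-1}\psi(h)=\gamma(h)$, which is exactly $\theta(\lambda(h))$; hence $\theta$ is well defined. The same recipe read backwards produces $\eta\colon B\to G$ with $\eta|_{G_1}=\gamma$ and $\eta|_{G_2}=\gamma^{-1}$, whose well-definedness reduces to the mirror identity $\psi(\gamma^{-1}\nu(h))=\gamma(h)$, again immediate from $\nu=\gamma\psi^{-1}\gamma$. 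Because $\theta$ and $\eta$ restrict to $\gamma$ and $\gamma^{-1}$ on the vertex groups, the compositions $\eta\theta$ and $\theta\eta$ fix the generating sets $G_1\cup G_2$ of $G$ and of $B$ pointwise, so $\theta$ is an isomorphism; and by construction $\theta(G_1)=G_2$ and $\theta(G_2)=G_1$, as required. The only genuine care needed is the bookkeeping of which embedding of $H$ is in force in $G$ versus in $B$; once that is pinned down the two well-definedness checks are routine.
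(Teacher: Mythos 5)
Your proof is correct and takes essentially the same route as the paper: both define $\theta$ via the universal property by letting it act as $\gamma$ on $G_1$ and as $\gamma^{-1}$ on $G_2$, and both reduce well-definedness to the single identity $(\gamma(\mu\alpha)^{-1}\gamma)^{-1}\gamma=\gamma^{-1}\mu\alpha$, which is exactly your computation $\nu(\gamma^{-1}\psi(h))=\gamma(h)$ with $\psi=\mu\alpha$, $\nu=\gamma\psi^{-1}\gamma$. The only (harmless) difference is that you spell out the inverse $\eta$ and the compositions explicitly, whereas the paper phrases the check as a morphism of push-outs and leaves invertibility implicit.
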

 \begin{proof} We show that  $\gamma$ and $\gamma^{-1}$ induce $\theta$ by the universal property. So it suffices to show that $(\gamma,\gamma^{-1},\gamma^{-1}\mu\alpha):(id,\mu\alpha)\longrightarrow (\gamma(\mu\alpha)^{-1}\gamma,id)$ is a morphism of push-outs.  In our case \eqref{pushout morphism} reads as follows: 
 $$(\gamma(\mu\alpha)^{-1}\gamma)^{-1}\gamma=\gamma^{-1}\mu\alpha$$
 that obviously holds. 
 \end{proof}

\begin{rem} \label{trocaprofinite} Let $G_1,G_2$ be profinite $OE$-groups, $\mu\in Inj(H, G_2)$ and  there is an isomorphism $\gamma:G_1\rightarrow G_2$  that sends $H$ to $\mu(H)$. Analogously to what was done in the previous lemma, it can be shown that there is an isomorphism $$\hat\theta: G_1 \amalg_{ H,\mu\alpha}G_2 \rightarrow G_1 \amalg_{ H, \gamma(\mu\alpha)^{-1}\gamma} G_2$$ for any $\alpha \in Aut(H)$, that maps $G_1$ onto $G_2$ and $G_2$ onto $G_1$. 
\end{rem}

\begin{prop}\label{gamma} Let $G=G_1 *_{ H, \mu_1}G_2$ and $B=G_1 *_{ H, \mu_2}G_2$ be free products of OE-groups with finite amalgamation, where $\mu_i\in Iso(H,H_2), i=1,2$.
\begin{enumerate}
\item[i)] Suppose there is no isomorphism $\gamma:G_1\rightarrow G_2$ such that $\gamma(H)=H_2$. Then $G\cong B$ if and only if $\mu_2=\beta_2 \mu_1\beta_1$ for some $\beta_1\in Aut_{G_1}(H)$, $\beta_2\in Aut_{G_2}(H_2)$.
\item[ii)] Suppose there is an isomorphism $\gamma:G_1\rightarrow G_2$ such that $\gamma(H)=H_2$. Then $G\cong B$ if and only if $\mu_2=\beta_2 \mu_1\beta_1$ or $\gamma\mu_1^{-1}\gamma=\beta_2 \mu_2\beta_1$ for some $\beta_1\in Aut_{G_1}(H)$, $\beta_2\in Aut_{G_2}(H_2)$.

\end{enumerate}
\end{prop}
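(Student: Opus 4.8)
The plan is to read off both equivalences from the isomorphism criterion of Theorem~\ref{6}, using Lemma~\ref{troca} to dispose of the case in which the two factors get swapped. Throughout I identify $H$ with $H_1\le G_1$, so that, in the push-out notation of Section~\ref{section4}, $G=G_1*_{id,H,\mu_1}G_2$ and $B=G_1*_{id,H,\mu_2}G_2$, where $id$ is the inclusion $H\hookrightarrow G_1$ and $\mu_1,\mu_2\in Iso(H,H_2)$. By Theorem~\ref{6}, $G\cong B$ if and only if, up to possibly interchanging $G_1$ and $G_2$ (which can only happen when they are isomorphic), there is an isomorphism of push-outs $(\beta_1,\beta_2,\alpha)\colon(id,\mu_2)\to(id,\mu_1)$ with $\beta_1\in Aut(G_1)$, $\beta_2\in Aut(G_2)$, $\alpha\in Aut(H)$.

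First I would settle the non-interchanging case, which underlies both parts. Specialising \eqref{morphism of pushouts} to this triple gives $id\,\alpha=\beta_1\,id$ and $\mu_1\alpha=\beta_2\mu_2$. The first relation forces $\beta_1$ to leave $H$ invariant, so $\beta_1\in Aut_{G_1}(H)$ with $\alpha=\beta_1|_H$; the second forces $\beta_2\in Aut_{G_2}(H_2)$ and rearranges to $\mu_2=\beta_2^{-1}\mu_1\alpha$. After relabelling $\beta_2^{-1}$ as $\beta_2$ and $\alpha$ as $\beta_1$, this is exactly the relation $\mu_2=\beta_2\mu_1\beta_1$ with $\beta_1\in Aut_{G_1}(H)$ and $\beta_2\in Aut_{G_2}(H_2)$. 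Conversely such a relation clearly builds an isomorphism of push-outs, hence of the amalgams.

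For part~(i) it remains to exclude the interchanging alternative under the hypothesis that no isomorphism $\gamma\colon G_1\to G_2$ carries $H$ onto $H_2$. Here I would argue that an interchanging isomorphism $\psi\colon G\to B$ restricts to an isomorphism $\psi|_{G_1}\colon G_1\to\psi(G_1)=G_2$; since by Proposition~\ref{2a} $\psi$ maps the edge group of $G$ onto that of $B$, this restriction sends the amalgamated copy $H\le G_1$ onto $H_2\le G_2$, producing a forbidden $\gamma$. Thus only the non-interchanging case can occur, and (i) follows from the previous paragraph.

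For part~(ii) both alternatives are genuinely possible, and the interchanging one is converted into the non-interchanging criterion by Lemma~\ref{troca}. Applying that lemma with $\mu=\mu_1$ and $\alpha=id$ (legitimate because $\gamma(H)=H_2=\mu_1(H)$) yields an interchanging isomorphism $\theta\colon G\to G':=G_1*_{id,H,\gamma\mu_1^{-1}\gamma}G_2$. If $\psi\colon G\to B$ is interchanging, then $\psi\theta^{-1}\colon G'\to B$ is non-interchanging, so the second paragraph applied to $G'$ and $B$ gives $\mu_2=\beta_2(\gamma\mu_1^{-1}\gamma)\beta_1$, that is $\gamma\mu_1^{-1}\gamma=\beta_2^{-1}\mu_2\beta_1^{-1}$, which is the second alternative after relabelling; conversely composing with $\theta$ shows this alternative does yield $G\cong B$. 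The hard part is purely the bookkeeping: keeping track of the direction of composition and checking that the composite of two factor-swapping isomorphisms is factor-preserving, so that Lemma~\ref{troca} really reduces the swapped case to the already understood non-swapped one.
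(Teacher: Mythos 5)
Your proposal is correct and follows essentially the same route as the paper: reduce via Proposition \ref{2a} (equivalently Theorem \ref{6}) to a factor-preserving or factor-swapping isomorphism, extract the relation $\mu_2=\beta_2\mu_1\beta_1$ from the push-out equations in the preserving case, note that a swapping isomorphism would produce a forbidden $\gamma$ under the hypothesis of (i), and in (ii) use Lemma \ref{troca} with $\alpha=id$, $\mu=\mu_1$ to compose the swapping isomorphism into a factor-preserving one, yielding the second alternative. The only cosmetic difference is that the paper composes $\theta\psi$ with $\psi\colon B\to G$ while you compose $\psi\theta^{-1}$ with $\psi\colon G\to B$; the bookkeeping in both is equivalent.
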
 
 
\begin{proof}
$(\Rightarrow)$ Let $\psi: B\rightarrow G$ an isomorphism. By Proposision \ref{2a}  we may assume that $\psi(G_i)=G_i\, (i=1,2)$ or $\psi(G_1)=G_2$ and $\psi(G_2)=G_1$ (note that the latter case occurs when $G_1\cong G_2$ and $\psi|_{G_1}:G_1\rightarrow G_2$ with $\psi(H)=H_2$). We divide the proof into two cases.

Case i) $\psi(G_i)=G_i, i=1,2$.  Put $\beta_1=\psi|_{G_1}$ and $\beta_2 = (\psi|_{G_2})^{-1}$. Since $\mu_2(h)=h$ and $\psi(h)=\mu_1(\psi(h))$ one has $(\beta_2^{-1}\mu_2)(h)=(\psi\mu_2)(h)=\psi(h)=\beta_1(h)=\mu_1\beta_1(h)$ for all $h\in H$. Therefore $\mu_2=\beta_2 \mu_1\beta_1$ where  $\beta_1\in Aut_{G_1}(H)$ and  $\beta_2\in Aut_{G_2}(H_2)$.\\

Case ii) If $\psi(G_1)=G_2$ and $\psi(G_2)=G_1$ we apply Lemma \ref{troca} with $\alpha=id$ and $\mu=\mu_1$; then we have an isomorphism $\theta:G\longrightarrow G_1 *_{ H, \gamma\mu_1^{-1}\gamma}G_2$. Put
 $\varphi=\theta\psi$. Then  $\varphi(G_i)=G_i$ $(i=1,2)$ and the result follows from Case i).

$(\Leftarrow)$ Follows directly from the fact that an isomorphism $(\beta_1,\beta_2):(id,\mu_2)\rightarrow (id, \mu_1)$ of push-outs
yields an isomorphism of the amalgamated free products.

\end{proof} 
 

Our goal now is to find formulas to calculate the number of isomorphism classes of the amalgamated free products of $OE$-groups $G_1$ and $G_2$ with finite amalgamation of fixed subgroups $H_1\leq G_1$, $H_2\leq G_2$. Thus interpreting $H_1,H_2$ as one common subgroup $H_1=H=H_2$ of $G=G_1 *_{H} G_2$ (i.e assuming w.l.o.g. $\mu_1=id$) 
 Corollary \ref{isomorphisms number} admits the following simple form in this case.
 
 \begin{cor}\label{fixed images isomorphisms number} Let    $G_1$ and $G_2$  be $OE$-groups having a finite common subgroup $H$. Suppose that  there is no isomorphism $\gamma:G_1\rightarrow G_2$ such that $\gamma(H)=H$. Then the number of isomorphism classes of amalgamated free products  $G_1*_{H,\mu_2}G_2$, $\mu_2\in Aut(H)$ is 
 
$$|\overline{Aut}_{G_2}(H)\backslash Aut(H)/ \overline{Aut}_{G_1}(H)|.$$
\end{cor}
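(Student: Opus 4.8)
The plan is to obtain the count as an immediate translation of the isomorphism criterion already established in this subsection, so that no new argument about the groups themselves is needed. The hypothesis that there is no isomorphism $\gamma\colon G_1\to G_2$ with $\gamma(H)=H$ is used in exactly one place: it places us in case (i) of Proposition \ref{gamma}, which removes the ``swapping'' alternative $\gamma\mu^{-1}\gamma=\beta_2\mu\beta_1$ that would otherwise contribute in case (ii). Correspondingly the normalisation $H_1=H=H_2$, i.e.\ $\mu_1=\mathrm{id}$, lets us regard each amalgam as determined by a single twist $\mu_2\in Aut(H)$. Thus for $\mu,\mu'\in Aut(H)$ Proposition \ref{gamma}(i) gives that $G_1*_{H,\mu}G_2\cong G_1*_{H,\mu'}G_2$ if and only if $\mu'=\beta_2\,\mu\,\beta_1$ for some $\beta_1\in Aut_{G_1}(H)$ and $\beta_2\in Aut_{G_2}(H)$.

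First I would note that the displayed equation is an identity of maps $H\to H$ and hence depends on $\beta_1,\beta_2$ only through their restrictions $\overline{\beta_i}=\beta_i|_H$. Writing these restrictions out, the criterion becomes $\mu'=\overline{\beta_2}\,\mu\,\overline{\beta_1}$ with $\overline{\beta_1}$ ranging over $\overline{Aut}_{G_1}(H)$ and $\overline{\beta_2}$ over $\overline{Aut}_{G_2}(H)$; both directions are immediate from the definition of $\overline{Aut}_{G_i}(H)$ as the image of the restriction map (every restriction lies in it, and every element of it lifts). Consequently the isomorphism relation on the parameter set $Aut(H)$ is precisely ``lying in the same double coset of $\overline{Aut}_{G_2}(H)$ on the left and $\overline{Aut}_{G_1}(H)$ on the right,'' and since $\mu_2$ runs over all of $Aut(H)$ each double coset is realised by some amalgam. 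Hence the isomorphism classes are in bijection with $\overline{Aut}_{G_2}(H)\backslash Aut(H)/\overline{Aut}_{G_1}(H)$, which is the assertion.

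As a consistency check matching the remark preceding the statement, one can instead start from Corollary \ref{isomorphisms number}: specialising to embeddings with the fixed images $H_1,H_2$ replaces $Inj(H,G_i)$ by $Inj(H,H_i)\cong Aut(H)$ and the acting group $Aut(G_i)$ by $\overline{Aut}_{G_i}(H)$, turning the count into $|\overline{Aut}_{G_1}(H)\times\overline{Aut}_{G_2}(H)\backslash Aut(H)\times Aut(H)/Aut(H)|$ with $Aut(H)$ acting diagonally, and Lemma \ref{mod diagonal} (taken with $\mu=\mathrm{id}$) collapses the diagonal quotient to the same double-coset set. I do not expect a genuine obstacle here, since the substantive input ---the rigidity of the amalgamated decomposition for $OE$-groups--- has already been supplied by Propositions \ref{2a} and \ref{gamma}; the only point requiring care is the bookkeeping of left versus right actions, namely checking that after quotienting by the diagonal $\Delta$ the residual actions of $\overline{Aut}_{G_2}(H)$ and $\overline{Aut}_{G_1}(H)$ come out on the correct sides, exactly as recorded in Lemma \ref{mod diagonal}.
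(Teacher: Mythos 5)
Your proof is correct and follows essentially the same route as the paper: the paper's own proof is the single line ``Follows directly from Proposition \ref{gamma} (i),'' and your argument is precisely the bookkeeping that makes this explicit, translating the criterion $\mu'=\beta_2\mu\beta_1$ (with the $\beta_i$ acting through their restrictions in $\overline{Aut}_{G_i}(H)$) into the double-coset count. Your consistency check via Corollary \ref{isomorphisms number} and Lemma \ref{mod diagonal} likewise matches the paper's own framing in the paragraph preceding the statement.
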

\begin{proof}  Follows directly from Proposition \ref{gamma} (i).
\end{proof}
   
For a subset $S\subseteq Aut(H)$ we use $\widetilde S$ to denote its image in $Out(H)$. 

\begin{thm} \label{fixed images out} Let    $G_1$ and $G_2$  be $OE$-groups with isomorphic finite subgroups $H_1$ and $H_2$. Suppose that  there is no isomorphism $\gamma:G_1\rightarrow G_2$ such that $\gamma(H)=H$. Then the number of isomorphism classes of  amalgamated free products  $G_1*_{H,\mu_2}G_2$, $\mu_2\in Aut(H)$     is 
 
$$ |\widetilde{Aut}_{G_2}(H)\backslash Out(H)/ \widetilde{Aut}_{G_1}(H)|.$$
            
\end{thm}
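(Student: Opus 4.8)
The plan is to reduce the statement to the double-coset count already established in Corollary \ref{fixed images isomorphisms number} and then merely pass from $Aut(H)$ to $Out(H)$. Since the hypothesis here is exactly that of that corollary (no isomorphism $\gamma\colon G_1\to G_2$ with $\gamma(H)=H$), I may start from the formula it provides, namely that the number of isomorphism classes of the $G_1*_{H,\mu_2}G_2$ equals $|\overline{Aut}_{G_2}(H)\backslash Aut(H)/\overline{Aut}_{G_1}(H)|$. It then remains only to identify this double-coset set with $\widetilde{Aut}_{G_2}(H)\backslash Out(H)/\widetilde{Aut}_{G_1}(H)$.

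The key observation driving this identification is the containment $Inn(H)\leq \overline{Aut}_{G_i}(H)$ for $i=1,2$. Indeed, for $h\in H$ the inner automorphism $\tau_h$ of $G_i$ leaves $H$ invariant, since $H^{h}=H$; hence $\tau_h\in Aut_{G_i}(H)$, and its restriction to $H$ is precisely conjugation by $h$ on $H$. Letting $h$ range over $H$ shows $Inn(H)\leq \overline{Aut}_{G_i}(H)$. For $G_2$ the $\mu$-twisting present in the definition of $\overline{Aut}^{\mu}_{G_2}(H)$ does not disturb this, because conjugation by $\mu$ carries $Inn(H_2)$ onto $Inn(H)$.

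Next I would invoke the elementary fact that if $N\trianglelefteq K$ and $N\leq A$, $N\leq B$ for subgroups $A,B\leq K$, then the quotient map $q\colon K\to K/N$ induces a bijection
$$B\backslash K/A\;\longrightarrow\;q(B)\backslash (K/N)/q(A),\qquad BkA\mapsto q(B)\,q(k)\,q(A).$$
Applying this with $K=Aut(H)$, $N=Inn(H)$, $A=\overline{Aut}_{G_1}(H)$ and $B=\overline{Aut}_{G_2}(H)$, and noting that $K/N=Out(H)$, $q(A)=\widetilde{Aut}_{G_1}(H)$ and $q(B)=\widetilde{Aut}_{G_2}(H)$, yields exactly the desired identification and hence the claimed formula.

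The only points that need verification are the well-definedness and injectivity of the induced map on double cosets. Both follow because any ambiguity $n\in N$ introduced upon lifting from $K/N$ back to $K$ can be absorbed into $A$ (or $B$), precisely because $N\leq A$ (respectively $N\leq B$). I do not anticipate a genuine obstacle: the entire content of the theorem is the containment $Inn(H)\leq \overline{Aut}_{G_i}(H)$ together with this standard collapse of double cosets modulo a common normal subgroup.
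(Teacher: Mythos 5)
Your proposal is correct and follows essentially the same route as the paper: the paper's proof likewise starts from Corollary \ref{fixed images isomorphisms number} and ``factors out'' $Inn(H)$, justified by the containment $Inn(H)\leq \overline{Inn}_{G_i}(H)\leq \overline{Aut}_{G_i}(H)$, which is exactly your observation that $\tau_h$ for $h\in H$ restricts to conjugation by $h$. The only difference is that you spell out the standard double-coset collapse modulo a common normal subgroup, which the paper leaves implicit.
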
            
\begin{proof} Let $Inn_{G}(H)$ 
be the set of all inner automorphisms of a group $G$  that fixe a subgroup $H$ and denote by 
$\overline{Inn}_{G}(H)$ the image of $Inn_{G}(H)$ in $Aut(H)$. Then 
$Inn(H) \leq \overline{Inn}_{G_i}(H) \leq  \overline{Aut}_{G_i}(H)$. Therefore we can factor 
$Inn(H)$ out in the formula of Corollary \ref{fixed images isomorphisms number} to get the needed expression. 
\end{proof}

Let $G_1,G_2$ be $OE$-groups and $H_1=H=H_2$ be their common finite subgroup.  Suppose now  there is an isomorphism $\gamma:G_1\longrightarrow G_2$ with $\gamma(H)=H$.
 
Define an action of $C_2=<x>$ on $\overline{Aut}_{G_{2}}(H)\backslash Aut(H)/\overline{Aut}_{G_1}(H)$ as follows  
\begin{equation}\label{action}
x . \left( \overline{Aut}_{G_2}(H) \alpha \overline{Aut}_{G_1}(H)\right)=\overline{Aut}_{G_2}(H) \gamma\alpha^{-1}\gamma\overline{Aut}_{G_1}(H), 
\end{equation}
for $\alpha\in Aut(H)$.

Then we can deduce from Proposition \ref{gamma} (ii)  the following

\begin{cor}\label{symmetrical fixed images isomorphism number} Let    $G_1$ and $G_2$  be $OE$-groups having a finite common subgroup $H$. Suppose that  there is an isomorphism $\gamma:G_1\rightarrow G_2$ such that $\gamma(H)=H$.  Then the number of isomorphism classes of amalgamated free products $G_1*_{H,\mu_2}G_2$ with $\mu_2\in Aut(H)$  is $$|(\overline{Aut}_{G_2}(H)\backslash Aut(H)/\overline{Aut}_{G_1}(H))/C_2|,$$ where $C_2$ acts as defined in (\ref{action}).
\end{cor}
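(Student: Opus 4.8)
The plan is to read the isomorphism criterion off Proposition \ref{gamma}(ii) and reinterpret it as the assertion that two parameters give isomorphic amalgams precisely when their double cosets lie in one $C_2$-orbit for the action (\ref{action}). Throughout I identify $H_1=H=H_2$ and take the edge identification to be an element $\mu\in Aut(H)$, so that every member of the family has the form $G_1*_{H,\mu}G_2$; the isomorphism classes are then the fibres of the map $Aut(H)\to\{\text{iso. classes}\}$, $\mu\mapsto[G_1*_{H,\mu}G_2]$, and it suffices to identify these fibres. To compare the members with parameters $\mu,\mu'$ I apply Proposition \ref{gamma}(ii) with $\mu_1=\mu$ and $\mu_2=\mu'$.

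First I would record that in the two alternatives of Proposition \ref{gamma}(ii), namely $\mu'=\beta_2\mu\beta_1$ and $\gamma\mu^{-1}\gamma=\beta_2\mu'\beta_1$, only the restrictions to $H$ of $\beta_1\in Aut_{G_1}(H)$ and $\beta_2\in Aut_{G_2}(H)$ enter, since each equation is an identity in $Aut(H)$. Hence these $\beta_i$ may be replaced by arbitrary elements of $\overline{Aut}_{G_i}(H)$, every such element lifting to $Aut_{G_i}(H)$, exactly as in the passage from Proposition \ref{gamma} to Corollary \ref{fixed images isomorphisms number}. Writing $D=\overline{Aut}_{G_2}(H)\backslash Aut(H)/\overline{Aut}_{G_1}(H)$ and $[\mu]$ for the double coset of $\mu$, the first alternative reads $[\mu']=[\mu]$ and the second reads $[\mu']=[\gamma\mu^{-1}\gamma]$.

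Next I verify that (\ref{action}), $x\cdot[\alpha]=[\gamma\alpha^{-1}\gamma]$, is a well-defined involution on $D$. For well-definedness, if $\alpha'=\beta_2\alpha\beta_1$ with $\beta_i\in\overline{Aut}_{G_i}(H)$, then $\gamma\alpha'^{-1}\gamma=(\gamma\beta_1^{-1}\gamma^{-1})(\gamma\alpha^{-1}\gamma)(\gamma^{-1}\beta_2^{-1}\gamma)$, so the claim reduces to the fact that conjugation by the restriction of $\gamma$ carries $\overline{Aut}_{G_1}(H)$ into $\overline{Aut}_{G_2}(H)$ and conjugation by $\gamma^{-1}$ carries $\overline{Aut}_{G_2}(H)$ into $\overline{Aut}_{G_1}(H)$; this holds because $\gamma\colon G_1\to G_2$ is an isomorphism with $\gamma(H)=H$, hence transports automorphisms of $G_1$ leaving $H$ invariant to automorphisms of $G_2$ leaving $H$ invariant, and conversely. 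That $x$ is an involution is immediate, since $(\gamma\alpha^{-1}\gamma)^{-1}=\gamma^{-1}\alpha\gamma^{-1}$ gives $x^2\cdot[\alpha]=[\gamma(\gamma^{-1}\alpha\gamma^{-1})\gamma]=[\alpha]$; the choice of $\gamma\cdots\gamma$ rather than $\gamma\cdots\gamma^{-1}$ in (\ref{action}) is exactly what makes this an involution.

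Combining these, $G_1*_{H,\mu}G_2\cong G_1*_{H,\mu'}G_2$ if and only if $[\mu']\in\{[\mu],\,x\cdot[\mu]\}=C_2\cdot[\mu]$, so the fibres of $\mu\mapsto[G_1*_{H,\mu}G_2]$ are precisely the $C_2$-orbits on $D$, whence the number of isomorphism classes equals $|D/C_2|$, as claimed. I expect the one genuinely delicate point to be the well-definedness of the action, i.e. checking that $\gamma$-conjugation really interchanges $\overline{Aut}_{G_1}(H)$ and $\overline{Aut}_{G_2}(H)$ in the manner above; once that is in place, the symmetry of the relation and the fact that the fibres are full orbits (so that no transitivity argument is needed) follow formally from $x^2=\mathrm{id}$ together with Proposition \ref{gamma}(ii) being an exact criterion.
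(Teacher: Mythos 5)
Your proposal is correct and follows essentially the same route as the paper, which states the corollary as a direct consequence of Proposition \ref{gamma}(ii) together with the $C_2$-action (\ref{action}): the two alternatives of that proposition translate exactly into ``same double coset'' and ``double coset moved by $x$'', so the isomorphism classes are the $C_2$-orbits on $\overline{Aut}_{G_2}(H)\backslash Aut(H)/\overline{Aut}_{G_1}(H)$. Your verification that (\ref{action}) is a well-defined involution (via the fact that conjugation by $\gamma|_H$ interchanges $\overline{Aut}_{G_1}(H)$ and $\overline{Aut}_{G_2}(H)$) is a detail the paper leaves implicit -- it is essentially the content of its later Lemma \ref{equality} -- and you supply it correctly.
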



Now  put $\tilde\gamma=\gamma Inn(H)\in Out(H)$. We define an action of the $C_2=<x>$ on $\widetilde{Aut}_{G_2}(H)\backslash Out(H)/\widetilde{Aut}_{G_1}(H)$ as follows

\begin{equation}\label{action2}
x. \left( \widetilde{Aut}_{G_2}(H) \widetilde \alpha \widetilde{Aut}_{G_1}(H)\right)=\widetilde{Aut}_{G_2}(H) \tilde\gamma \widetilde \alpha^{-1}\tilde\gamma\widetilde{Aut}_{G_1}(H), 
\end{equation}
for $\widetilde \alpha\in Out(H)$.\\

As in previous case we can factor out $Inn(H)$ from the formulas in Corollary \ref{symmetrical fixed images isomorphism number} to get the following

\begin{cor}\label{symmetrical fixed images isomorphism number2} Let    $G_1$ and $G_2$  be $OE$-groups having a finite common subgroup $H$. Suppose that  there is an isomorphism $\gamma:G_1\rightarrow G_2$ such that $\gamma(H)=H$. Then the number of isomorphism classes of  amalgamated free products $G_1*_{H,\mu_2}G_2$  is
$$|(\widetilde{Aut}_{G_2}(H)\backslash Out(H)/\widetilde{Aut}_{G_1}(H))/C_2|,$$ where $C_2$ acts as defined in (\ref{action2}).

\end{cor}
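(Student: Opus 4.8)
The plan is to deduce this directly from Corollary \ref{symmetrical fixed images isomorphism number} by pushing the entire picture forward along the quotient map $q:Aut(H)\longrightarrow Out(H)=Aut(H)/Inn(H)$, exactly in the spirit of how Theorem \ref{fixed images out} was obtained from Corollary \ref{fixed images isomorphisms number}. The essential algebraic fact I would invoke is the one already recorded in the proof of Theorem \ref{fixed images out}: $Inn(H)\leq \overline{Inn}_{G_i}(H)\leq \overline{Aut}_{G_i}(H)$ for $i=1,2$. Thus $Inn(H)$ is a normal subgroup of $Aut(H)$ contained in \emph{both} $\overline{Aut}_{G_1}(H)$ and $\overline{Aut}_{G_2}(H)$, and the standard double-coset factorization (if $N\trianglelefteq G$ and $N\leq A\cap B$, then $A\backslash G/B\cong (A/N)\backslash(G/N)/(B/N)$) yields a natural bijection
$$\Phi:\ \overline{Aut}_{G_2}(H)\backslash Aut(H)/\overline{Aut}_{G_1}(H)\ \xrightarrow{\ \sim\ }\ \widetilde{Aut}_{G_2}(H)\backslash Out(H)/\widetilde{Aut}_{G_1}(H),$$
induced by $q$ and sending $\overline{Aut}_{G_2}(H)\,\alpha\,\overline{Aut}_{G_1}(H)$ to $\widetilde{Aut}_{G_2}(H)\,\widetilde\alpha\,\widetilde{Aut}_{G_1}(H)$.

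Next I would check that $\Phi$ intertwines the two $C_2$-actions, i.e. that it is $C_2$-equivariant when the left-hand side carries the action \eqref{action} and the right-hand side the action \eqref{action2}. Since $q$ is a homomorphism with $q(\gamma)=\tilde\gamma$ and $q(\alpha^{-1})=\widetilde\alpha^{-1}$, applying $q$ to the representative $\gamma\alpha^{-1}\gamma$ of $x\cdot(\overline{Aut}_{G_2}(H)\,\alpha\,\overline{Aut}_{G_1}(H))$ in \eqref{action} returns precisely $\tilde\gamma\,\widetilde\alpha^{-1}\,\tilde\gamma$, which is the representative appearing in \eqref{action2}. Hence $\Phi(x\cdot c)=x\cdot\Phi(c)$ for every double coset $c$; in particular this single computation simultaneously verifies that \eqref{action2} is well defined, being merely the push-forward under $\Phi$ of the already-justified action \eqref{action} (so its independence of the choice of $\gamma$ and of the representative $\widetilde\alpha$ is inherited from Corollary \ref{symmetrical fixed images isomorphism number}).

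Finally, a $C_2$-equivariant bijection descends to a bijection of orbit sets, so $\Phi$ induces
$$\bigl(\overline{Aut}_{G_2}(H)\backslash Aut(H)/\overline{Aut}_{G_1}(H)\bigr)/C_2\ \xrightarrow{\ \sim\ }\ \bigl(\widetilde{Aut}_{G_2}(H)\backslash Out(H)/\widetilde{Aut}_{G_1}(H)\bigr)/C_2.$$
Comparing cardinalities and invoking Corollary \ref{symmetrical fixed images isomorphism number} for the left-hand count then gives the claimed formula. The computations here are all routine once the key inclusion $Inn(H)\leq\overline{Aut}_{G_i}(H)$ is in hand; the only point demanding genuine care—and hence the part I would write out most explicitly—is the $C_2$-equivariance of $\Phi$, since the two actions are defined by superficially different formulas \eqref{action} and \eqref{action2}, and one must confirm that $q$ transports the first \emph{onto} the second on the nose, rather than merely being compatible up to an extra inner automorphism.
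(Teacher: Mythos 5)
Your proposal is correct and follows essentially the same route as the paper: the paper's entire justification is the remark that, as in the proof of Theorem \ref{fixed images out}, one can ``factor out $Inn(H)$'' from the formulas in Corollary \ref{symmetrical fixed images isomorphism number}, using precisely the inclusion $Inn(H)\leq \overline{Inn}_{G_i}(H)\leq \overline{Aut}_{G_i}(H)$ that you invoke. You have simply made explicit the two details the paper leaves implicit --- the double-coset bijection induced by $q:Aut(H)\to Out(H)$ and the $C_2$-equivariance identifying the action \eqref{action} with \eqref{action2} --- both of which you verify correctly.
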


\begin{rem} \label{finite-by-cyclic profinite isomorphism problems2} 
The results of Section \ref{section4} also hold if we assume that $G_i\,(i=1,2)$ are semidirect products $M_i \rtimes \mathbb{Z}$ with $M_i$ finite such that $M_i\neq H$.
This follows from Remark \ref{finite-by-cyclic} (i).
\end{rem}


\section{Profinite isomorphism problem} \label{Profinite isomorphism problem}


In Theorem \ref{6} we proved  that the amalgamated free  products $G_1*_{\lambda, H,\mu} G_2$  and $G_1*_{\eta, H, \nu} G_2$ are isomorphic if and only if their push-outs $(\lambda,\mu)$ and $(\eta,\nu)$ are isomorphic. For  profinite amalgamated free products $G=G_1\amalg_{\lambda, H,\mu} G_2$  and $B=G_1\amalg_{\eta, H, \nu} G_2$ it is true only up to conjugation of one of the factors by an element of the normalizer $N_{G}(H)$. As in the abstract case denote by $\overline{Aut}_{G_i}(H)$ the natural image of $Aut_{G_i}(H)$ in $Aut(H)$.

Let $G=G_1\coprod_{\lambda,H,\mu} G_2$ be an  amalgamated free profinite product
of $OE$-groups with $H$ finite. By abuse of notation denote $N_{ G}(\lambda(H))=N_{ G}(\mu(H))$ simply as $N_{ G}(H)$ (viewing $H$ as a subgroup of $G$). Then for any $r\in N_{ G}(H)$, $\tau_r$ leaves $H$ invariant. 


With these notations we have the following Theorem.
    

\begin{thm}\label{1}  
 Let $\lambda,\eta:H\longrightarrow G_1$ and $\mu, \nu: H\longrightarrow G_2$ be monomorphisms of a finite group $H$ to profinite $OE$-groups $G_1$, $G_2$. Let  $G=G_1\amalg_{\lambda, H,\mu} G_2$  and $B=G_1\amalg_{\eta, H, \nu} G_2$ be  amalgamated free profinite products. Then $ G\cong  B$ if and only if (up to possibly interchanging  $G_1, G_2$ in the case $G_1\cong G_2$) there exist $\beta_{1}\in Aut(G_1)$, $\beta_{2}\in Aut(G_2)$ and $r\in N_{G}(H)$ with $\overline{\langle G_1, G_2^r\rangle}=G$ such that $(\beta_1,\beta_2, \alpha):(\eta,\nu)\longrightarrow (\lambda,\mu\tau_r)$ is an isomorphism of push-outs for certain $\alpha\in Aut(H)$. 
\end{thm}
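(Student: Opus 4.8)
The plan is to follow the abstract argument of Theorem~\ref{6}, replacing Proposition~\ref{2a} by its profinite counterpart Proposition~\ref{compart1}; the two genuinely new features are that the second factor is now matched only up to conjugation, and that this conjugating element must be pushed inside $N_G(H)$. Throughout I identify $G_1,G_2,H$ with their images in $G$ under the canonical maps $j_1,j_2$ (these are injective since $H$ is finite, so the product is proper), and I write $\iota=j_1\lambda=j_2\mu\colon H\to G$ for the common embedding of the edge group, and $\iota_B$ for the analogue in $B$. The case $G_1\cong G_2$ in which Proposition~\ref{compart1} interchanges the factors I reduce to the main case by Remark~\ref{trocaprofinite}, so below I assume $\psi$ preserves the factors.

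For the direction $(\Rightarrow)$, assuming $G\cong B$ I would first invoke Proposition~\ref{compart1} to obtain an isomorphism $\psi\colon B\to G$ with $\psi(G_1)=G_1$, $\psi(H)=H$ and $\psi(G_2)=G_2^{\,b}$ for some $b\in G$. Then $\beta_1:=j_1^{-1}\psi j_1^{B}\in Aut(G_1)$, and since $\psi(H)=H$ one checks $\beta_1\eta(H)=\lambda(H)$, so that $\alpha:=\lambda^{-1}\beta_1\eta\in Aut(H)$ is well defined and $\beta_1\eta=\lambda\alpha$. The crucial point is to upgrade $b$ to an element normalizing $H$: from $H=\psi(H)\subseteq\psi(G_2)=G_2^{\,b}$ one gets $bHb^{-1}\subseteq G_2\cap G_1^{\,b^{-1}}$, and Proposition~\ref{o421}(ii), applied with $i=2$, $j=1$ and conjugator $b^{-1}$, produces $c\in G_2$ with $bHb^{-1}\subseteq H^{\,c}$; comparing orders forces $bHb^{-1}=H^{\,c}$, so $r:=cb\in N_G(H)$, while $G_2^{\,r}=G_2^{\,b}=\psi(G_2)$ because $c\in G_2=N_G(G_2)$. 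Setting $\beta_2:=j_2^{-1}\tau_r\psi j_2^{B}\in Aut(G_2)$, a direct computation using $\psi\iota_B=\iota\alpha$ and $r\in N_G(H)$ gives $\beta_2\nu=\mu\tau_r\alpha$, so $(\beta_1,\beta_2,\alpha)\colon(\eta,\nu)\to(\lambda,\mu\tau_r)$ is an isomorphism of push-outs; surjectivity of $\psi$ yields $\overline{\langle G_1,G_2^{\,r}\rangle}=G$.

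For $(\Leftarrow)$, an isomorphism of push-outs $(\beta_1,\beta_2,\alpha)\colon(\eta,\nu)\to(\lambda,\mu\tau_r)$ identifies $B$ with $G':=G_1\amalg_{\lambda,H,\mu\tau_r}G_2$, so it suffices to show $G'\cong G$. Because $r\in N_G(H)$, the assignments $g\mapsto g$ on $G_1$ and $g\mapsto r^{-1}gr$ on $G_2$ respect the defining relation of $G'$ (for $h\in H$ one has $r^{-1}(rhr^{-1})r=h$), and hence induce a continuous homomorphism $\Phi\colon G'\to G$ with image $\overline{\langle G_1,G_2^{\,r}\rangle}$, which is all of $G$ by hypothesis; thus $\Phi$ is surjective. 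What remains is injectivity of $\Phi$, i.e.\ that $G_1$ and $G_2^{\,r}$ realize the \emph{proper} amalgam $G=G_1\amalg_H G_2^{\,r}$. Here Proposition~\ref{o421}(ii) gives $G_1\cap G_2^{\,r}=H$, and I would then derive the decomposition from the profinite Bass--Serre theory: the graph with vertex set $G/G_1\sqcup G/G_2^{\,r}$ and edge set $G/H$ is connected since $G_1,G_2^{\,r}$ generate $G$, and one must check it is a profinite tree, after which \cite[Theorem 6.6.1]{R} exhibits $G$ as $G_1\amalg_H G_2^{\,r}\cong G'$ and forces $\Phi$ to be an isomorphism.

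The hard part will be exactly this last injectivity step. Unlike the abstract situation of Theorem~\ref{6}, the splitting $G=G_1\amalg_H G_2^{\,r}$ is \emph{not} recorded by the standard tree $S(G)$, which only sees the original splitting $G_1\amalg_H G_2$; one therefore cannot read it off an existing action and must instead establish simple connectivity of the candidate tree above. The natural route is to build the $G$-equivariant collapsing map from that graph to $S(G)$ (sending each coset edge to the geodesic between the images of $v_1$ and $r^{-1}v_2$), which is injective on vertices, and to deduce simple connectivity from that of $S(G)$ together with the control of the finite edge group supplied by Proposition~\ref{o421}(ii). By contrast, the analogous delicate point in $(\Rightarrow)$ — replacing the conjugator $b$ by an element of $N_G(H)$ — is resolved cleanly by Proposition~\ref{o421}(ii), which governs the fusion of $H$ inside $G_2$.
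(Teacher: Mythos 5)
Your forward direction is, step for step, the paper's own argument: Proposition~\ref{compart1} gives $\psi$ with $\psi(G_1)=G_1$, $\psi(H)=H$, $\psi(G_2)=G_2^{\,b}$; Proposition~\ref{o421}(ii) is used exactly as in the paper to trade $b$ for $r=cb\in N_G(H)$ with $G_2^{\,r}=G_2^{\,b}$ (the paper writes $H^{g^{-1}}=G_1^{g^{-1}}\cap G_2=H^{x}$, $x\in G_2$, $r=xg$); and your $\beta_1=\psi|_{G_1}$, $\beta_2=\tau_r\circ\psi|_{G_2}$, $\alpha=\lambda^{-1}\beta_1\eta$ and the verification of Equation~(\ref{pushout morphism}) coincide with the paper's. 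That half is correct and identical in approach.

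The genuine gap is in the converse, at the very point you flag as ``the hard part''. You reduce it to showing that $G$, with its subgroups $G_1$, $G_2^{\,r}$, $H$, is the \emph{proper} amalgam $G_1\amalg_H G_2^{\,r}$, you propose to build a new coset graph with vertices $G/G_1\sqcup G/G_2^{\,r}$ and edges $G/H$, and you leave unproved that this graph is a profinite tree; the ``collapsing map to $S(G)$'' is only a sketch, and simple connectivity of such coset graphs is exactly the delicate issue in the profinite theory (it is what fails when an amalgam is not proper), so the proof is not complete as written. Moreover, your claim that the splitting ``cannot be read off an existing action'' misreads the available tool: the result the paper invokes (\cite[Proposition 4.4]{ZM}, or \cite[Theorem 6.6.1]{R}) applies to a profinite group acting on a profinite tree and \emph{any} two vertices, not necessarily adjacent, whose stabilizers topologically generate the group; the conclusion is that the group is the proper amalgam of those two vertex stabilizers over the pointwise stabilizer of the geodesic joining them. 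Applied to the given action of $G$ on the standard tree $S(G)$ at $v=1\cdot G_1$ and $w=r^{-1}G_2$ (with stabilizers $G_1$ and $G_2^{\,r}$), the hypothesis $\overline{\langle G_1,G_2^{\,r}\rangle}=G$ yields $G=G_1\amalg_Z G_2^{\,r}$ with $Z$ the geodesic stabilizer. One then identifies $Z=H$: since $r\in N_G(H)$, the finite group $H$ fixes both $v$ and $w$, and its fixed-point set is a profinite subtree, so $H$ fixes $[v,w]$ pointwise and $H\le Z$; conversely $Z\le G_1\cap G_2^{\,r}\le H^{b}$ by Proposition~\ref{o421}(ii), so finiteness forces $Z=H$. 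This, composed with your (correct) push-out identification $B\cong G_1\amalg_{\lambda,H,\mu\tau_r}G_2\cong G_1\amalg_H G_2^{\,r}$, is the entire content of the paper's one-line ``$(\Leftarrow)$''. In short: you located the right obstacle but did not overcome it, and no new tree needs to be constructed — the cited theorem already extracts the new splitting from the existing action.
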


\begin{proof} $(\Leftarrow)$ Note that  $G=\overline{\langle G_1, G_2^r\rangle}= G_1\amalg_{\lambda, H,\mu}  G_2$ by  \cite[Proposition 4.4]{ZM} or  \cite[Theorem 6.6.1]{R}. Therefore 'if' follows.   

\medskip

$(\Rightarrow)$ By Proposition \ref{compart1}, there exists an isomorphism $\psi:B\longrightarrow G$  such  that $\psi(G_1)=G_1$, $\psi(G_2)=G_2^{g}$, for some $g\in G$, and $\psi(H)=H$. 

On the other hand, we have $H^{g^{-1}}=G_1^{g^{-1}}\cap G_2=H^{x}$ for some $x\in G_{2}$ (by Proposition \ref{o421}), whence  $H^{xg}=H$ and so $xg \in N_{G}(H).$ Denote $r:=xg$ and observe that $G_2^g=G_2^r$. To complete the proof define 
$$\beta_{1}=\psi\mid_{G_{1}} \in Aut(G_1)\,\mbox{ and }\, \beta_{2}= \tau_{r} \circ \psi\mid_{G_2} \in Aut(G_2).$$
Then $$(\tau_{r}^{-1}\beta_2\nu)(h)=\psi(\nu(h))=\psi(\eta(h))=(\beta_{1}\eta)(h)= \lambda(\lambda^{-1}\beta_{1}\eta)(h)=\mu(\lambda^{-1}\beta_{1}\eta)(h),$$ for all $h \in H.$ Therefore 
$(\tau_r\mu)^{-1}\beta_2\nu=\lambda^{-1}\beta_{1}\eta$ can be defined to be $\alpha\in Aut(H)$ (cf. Equation (\ref{pushout morphism})). 
The proof is complete because $(\beta_1,\beta_2, \alpha):(\eta,\nu)\longrightarrow (\lambda,\mu\tau_r)$ is an isomorphism of push-outs.
\end{proof}





  To estimate the number of profinite  free products with amalgamation up to isomorphism we note that as in the abstract case  $Aut(H)$ acts on $Inj(H,G_i)$ naturally on the right by $\varphi \cdot\alpha=\varphi\alpha$, $\varphi\in Inj(H,G_i)$, $\alpha\in Aut(H)$ and the automorphism group $Aut(G_i)$ acts naturally on $Inj(H,G_i)$ on the left $\beta \cdot \varphi=\beta\varphi$, $\beta\in Aut(G_i)$. 

\bigskip
Then  $Inj(H,G_1)\times Inj(H,G_2)$ gives the set  of profinite push-outs 

$$\xymatrix{G_1\ar[r]&G_1\amalg_H G_2\\
            H\ar[r]^{}\ar[u]^{} &G_2\ar[u]}$$ on which $Aut(G_1)\times Aut(G_2)$ acts on the left and we  consider the diagonal action of $Aut(H)$ on it on the right  by $(\lambda,\mu)\alpha=(\lambda\alpha, \mu\alpha)$, $\alpha\in Aut(H)$.  Thus we can interpret isomorphism of push-out diagrams in terms of this action. As in the abstract case we split our consideration into two cases: $G_1\not\cong G_2$ and $G_1\cong G_2$. Our next objective is to estimate the number of non-isomorphic   amalgamated free profinite products $G=G_1\amalg_H  G_2$ with $G_1,G_2$ $OE$-groups and  $H$ finite. 
            
            \subsection{$G_1\not\cong G_2$}
In this case the interpretation described in the latter paragraph can be stated as the following 
 
\begin{prop}\label{pushout profinite orbit}            
            Suppose  that  $G_1\not\cong G_2$. Then push-out diagrams 
            $$\xymatrix{G_1\ar[r]&G_1 \amalg_{\lambda, H, \mu} G_2\\
            H\ar[r]^{\mu}\ar[u]^{\lambda} &G_2\ar[u]} \hspace{3cm}
\xymatrix{G_1\ar[r]&G_1 \amalg_{\eta, H, \nu} G_2\\
            H\ar[r]^{\nu}\ar[u]^{\eta} &G_2\ar[u]}$$ are in the same  $(Aut(G_1)\times Aut(G_2), Aut(H))$-orbit  if and only if they are  isomorphic. In this case  $G_1\amalg_{\lambda, H, \mu}G_2\cong G_1\amalg_{\eta, H, \nu}G_2$. \end{prop}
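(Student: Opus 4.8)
The plan is to prove this as the profinite analogue of Proposition \ref{pushout orbit}, but to assert only the two implications contained in the statement. Unlike the abstract situation of Theorem \ref{6}, the isomorphism criterion of Theorem \ref{1} produces an isomorphism of push-outs only after twisting one leg by a conjugation $\tau_r$ with $r\in N_G(H)$, so the reverse implication ``isomorphic amalgamated products imply same orbit'' genuinely fails and should not be claimed. I would therefore split the proof in two: first that lying in the same $(Aut(G_1)\times Aut(G_2),Aut(H))$-orbit is literally the same as being isomorphic as push-out diagrams, and then that the latter forces the two profinite amalgamated products to be isomorphic.

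For the first step I would simply unwind the action. By its description, $(\lambda,\mu)$ and $(\eta,\nu)$ lie in the same orbit precisely when $(\lambda,\mu)=(\beta_1\eta\alpha,\beta_2\nu\alpha)$ for some $\beta_1\in Aut(G_1)$, $\beta_2\in Aut(G_2)$ and $\alpha\in Aut(H)$, that is, $\lambda\alpha^{-1}=\beta_1\eta$ and $\mu\alpha^{-1}=\beta_2\nu$. Comparing with (\ref{morphism of pushouts}), this says exactly that $(\beta_1,\beta_2,\alpha^{-1})$ is a morphism of push-outs $(\eta,\nu)\longrightarrow(\lambda,\mu)$, and since $\beta_1,\beta_2,\alpha^{-1}$ are isomorphisms it is an isomorphism of push-out diagrams; the converse reading of (\ref{pushout morphism}) gives the other direction. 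This step is purely formal and uses neither $G_1\not\cong G_2$ nor the profinite hypotheses.

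For the second step I would invoke the universal property. Since $H$ is finite, each of $G_1\amalg_{\lambda,H,\mu}G_2$ and $G_1\amalg_{\eta,H,\nu}G_2$ is proper and is the push-out of its defining diagram in the category of profinite groups. An isomorphism $(\beta_1,\beta_2,\alpha)$ of the two push-out diagrams then induces, by the universal property of the push-out, an isomorphism between $G_1\amalg_{\eta,H,\nu}G_2$ and $G_1\amalg_{\lambda,H,\mu}G_2$ restricting to $\beta_1,\beta_2,\alpha$ on the factors; this is verbatim the formal argument used for the ``$\Leftarrow$'' direction of Theorem \ref{6}, valid unchanged in the profinite category.

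The subtle point, and the reason the statement stops short of the three-way equivalence of Proposition \ref{pushout orbit}, is the absent converse. Here the obstacle is exactly the element $r\in N_G(H)$ with $\overline{\langle G_1,G_2^r\rangle}=G$ produced by Theorem \ref{1}: from an isomorphism of the amalgamated products one only recovers an isomorphism of push-outs $(\eta,\nu)\longrightarrow(\lambda,\mu\tau_r)$, and the twist $\tau_r$ cannot in general be absorbed into $Aut(G_2)$, so $(\lambda,\mu)$ and $(\eta,\nu)$ need not be in the same orbit. This twist is precisely what the genus formulas of the introduction measure, and so I would deliberately halt at the one-directional conclusion rather than attempt to close the loop back to the orbit condition.
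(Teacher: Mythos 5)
Your proposal is correct and takes essentially the same route as the paper: the paper states Proposition \ref{pushout profinite orbit} without a separate proof, treating it as immediate from the preceding orbit interpretation (orbit membership is literally push-out diagram isomorphism, by unwinding equations (\ref{morphism of pushouts})--(\ref{pushout morphism})) together with the universal-property argument already used for the ``$\Leftarrow$'' direction of Theorem \ref{6}, which is exactly what you spell out. Your reading of why the statement stops short of the three-way equivalence of Proposition \ref{pushout orbit} --- the conjugation twist $\tau_r$, $r\in N_G(H)$, coming from Theorem \ref{1} --- also matches the paper's own subsequent discussion, where it is noted that isomorphic profinite amalgams need not yield a morphism of push-out diagrams, so only upper bounds are obtained in the profinite counting theorems.
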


\bigskip
Let 
$$\xymatrix{G_1\ar[r]&G=G_1\amalg_H G_2\\
            H\ar[r]^{}\ar[u]^{} &G_2\ar[u]}$$ be a profinite push-out. 
            
            To produce the formula for non-isomorphic  amalgamated free profinite products we should order this counting similar to the abstract situation above.

 Recall that   $Aut(G_i)$ acts on the left on $Inj(H,G_i)$. By Proposition \ref{pushout profinite orbit} the multiplication of $\mu$ by elements of $Aut(G_2)$ on the left or symmetrically the multiplication of $\lambda$ by   elements of $Aut(G_1)$ on the left  give isomorphic amalgamated free profinite products. Remember that
 we can change  $\mu$ and $\lambda$ also by multiplying them   by elements of $Aut(H)$ on the right   and multiplying $(\lambda,\mu)$ coordinatewise by $\alpha\in Aut(H)$ also gives isomorphism of the  amalgamated free (profinite) products (Proposition \ref{pushout orbit} (Proposition \ref{pushout profinite orbit})). In other words, $Aut(H)\times Aut(H)$ acts on $Inj(H,G_1)\times Inj(H,G_2)$ on the right and the action of the diagonal gives isomorphism of the  amalgamated free profinite products. It is not if and only if however (cf. Theorem \ref{1}). Indeed multiplying $\mu$ by elements of $\overline{N}_G(H)$ might not produce the morphism of the push-out diagrams but  still can produce isomorphic amalgamated  free profinite  products. Recall also that  $N_G(H)$ depends on the choice of a push-out ($\lambda,\mu)$.
 
  Therefore in the profinite case we can only give an estimate. 
 Namely combining Theorem \ref{1} and Proposition \ref{pushout profinite orbit} we deduce the following

\begin{thm}\label{profinite isomorphism number} Let $G_1\not\cong G_2$ be  profinite  $OE$-groups  and $H$  a finite  group. Then the number  of isomorphism classes of amalgamated free profinite products $G_1\amalg_H G_2$  is

   $$\leq |\left( \left( Aut(G_1)\backslash Inj(H,G_1) \right) \times \left( Aut(G_2)\backslash Inj(H,G_2) \right) \right)/ Aut(H)|,$$ where $Aut(H)$ acts diagonally.
   
\end{thm}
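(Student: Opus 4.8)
The plan is to construct a surjection from the orbit set appearing in the statement onto the set of isomorphism classes of products, and to read off the inequality from it. First I would observe that every amalgamated free profinite product under consideration is, by construction, of the form $G_1\amalg_{\lambda,H,\mu}G_2$ for some pair of embeddings $(\lambda,\mu)\in Inj(H,G_1)\times Inj(H,G_2)$. Hence the rule $(\lambda,\mu)\mapsto [\,G_1\amalg_{\lambda,H,\mu}G_2\,]$ sending a push-out to the isomorphism class of its product defines a surjective map $\Phi$ from $Inj(H,G_1)\times Inj(H,G_2)$ onto the set of isomorphism classes we wish to bound.

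Next I would appeal to Proposition \ref{pushout profinite orbit}: two push-outs lying in the same $(Aut(G_1)\times Aut(G_2),Aut(H))$-orbit give isomorphic products. Thus $\Phi$ is constant on orbits and descends to a surjection $\bar\Phi$ whose domain is the orbit set. To identify that orbit set with the right-hand side of the bound, I would note that the left action of $Aut(G_1)\times Aut(G_2)$ and the diagonal right action of $Aut(H)$ commute, so one may quotient first on the left — producing $(Aut(G_1)\backslash Inj(H,G_1))\times(Aut(G_2)\backslash Inj(H,G_2))$ — and then by the induced diagonal $Aut(H)$-action on the right, yielding exactly
$$\left((Aut(G_1)\backslash Inj(H,G_1))\times(Aut(G_2)\backslash Inj(H,G_2))\right)/Aut(H).$$
Surjectivity of $\bar\Phi$ then forces the number of isomorphism classes to be at most the cardinality of this set, which is the claim.

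Finally, I would emphasize why this argument produces only an inequality, in contrast with the abstract case of Corollary \ref{isomorphisms number} where Proposition \ref{pushout orbit} gives an exact count. The isomorphism criterion of Theorem \ref{1} permits, beyond the push-out morphisms, an extra conjugation $\mu\mapsto\mu\tau_r$ by an element $r\in N_G(H)$ with $\overline{\langle G_1,G_2^r\rangle}=G$. Such conjugations are in general not induced by the diagonal $Aut(H)$-action on push-outs, so push-outs in distinct orbits can still yield isomorphic products and $\bar\Phi$ may fail to be injective. I expect this gap — the isomorphisms contributed by the profinite normalizer $N_G(H)$ that are invisible to the orbit count — to be the only real subtlety; everything else is routine bookkeeping of two commuting group actions.
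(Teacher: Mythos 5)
Your proposal is correct and follows essentially the same route as the paper: the text preceding the theorem argues exactly as you do, using Proposition \ref{pushout profinite orbit} to see that orbit-equivalent push-outs yield isomorphic products (hence a surjection from the orbit set onto isomorphism classes), and citing Theorem \ref{1} to explain that normalizer conjugations $\mu\mapsto\mu\tau_r$ are the reason the count is only an upper bound rather than an equality. Your write-up merely makes the surjection $\bar\Phi$ and the commuting-actions bookkeeping explicit, which is a faithful formalization of the paper's informal argument.
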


\subsection{$G_1\cong G_2$}
\bigskip
Suppose now $G_1\cong G_2$. As in the abstract case, we  think of $G_1,G_2$ as the same group $G$ and consider the set $Inj(H,G)$. Then as before $Aut(H)$ acts on the set of push-outs $(\lambda,\mu)$ diagonally and the wreath product $W=Aut(G) \wr C_2$ acts on the set of push-outs $(\lambda,\mu)$ on the left, with $C_2$ permuting $\lambda$ and $\mu$.
Hence  Theorem \ref{profinite isomorphism number} in this case reads as follows.

\begin{thm}\label{profinite symmetrical isomorphisms number} Let $H$ be a  finite group and $G$ be  a profinite  $OE$-group. Then  the number of the isomorphism classes of amalgamated free profinite products $G\amalg_H \widetilde{G}$ with $G\cong \widetilde{G}$  is  $$\leq |(Aut(G)\wr C_2)\backslash (Inj(H,G)\times Inj(H,G))/Aut(H)|,$$
 where $Aut(H)$ acts diagonally. 
            
            \end{thm}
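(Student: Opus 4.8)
The plan is to obtain this statement as the symmetric counterpart of Theorem \ref{profinite isomorphism number}, in the same way that Corollary \ref{symmetrical isomorphisms number} is the symmetric counterpart of Corollary \ref{isomorphisms number} in the abstract setting. First I would observe that every amalgamated free profinite product $G\amalg_H\widetilde{G}$ with $\widetilde{G}\cong G$ is presented by some push-out $(\lambda,\mu)\in Inj(H,G)\times Inj(H,G)$, so there is a natural surjection from $Inj(H,G)\times Inj(H,G)$ onto the set of isomorphism classes of such products. The whole argument then amounts to showing that this surjection factors through the orbit set and to counting cardinalities.

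Next I would invoke the symmetric version of Proposition \ref{pushout profinite orbit}: if two push-outs lie in the same $(Aut(G)\wr C_2, Aut(H))$-orbit, then the corresponding amalgamated free profinite products are isomorphic. The left action of $Aut(G)\wr C_2$ incorporates both the pair of automorphisms $(\beta_1,\beta_2)\in Aut(G)\times Aut(G)$ supplied by Theorem \ref{1} and, via the $C_2$-factor, the possible interchange of the two isomorphic factors $G_1\cong G_2$; the diagonal right action of $Aut(H)$ accounts for the relabeling $\alpha$ of the amalgamated subgroup. Consequently the surjection above factors through the orbit set $(Aut(G)\wr C_2)\backslash(Inj(H,G)\times Inj(H,G))/Aut(H)$, and comparing cardinalities yields the desired inequality.

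The essential point, and the reason only an inequality is obtained, is the extra conjugation appearing in Theorem \ref{1}. There the isomorphism of the profinite amalgamations $G_1\amalg_{\lambda,H,\mu}G_2$ and $G_1\amalg_{\eta,H,\nu}G_2$ is equivalent to an isomorphism of push-outs $(\eta,\nu)\longrightarrow(\lambda,\mu\tau_r)$ for some $r\in N_G(H)$ with $\overline{\langle G_1, G_2^r\rangle}=G$. The twist by $\tau_r$ need not preserve the $(Aut(G)\wr C_2, Aut(H))$-orbit of a push-out, so it may identify push-outs lying in distinct orbits and thereby collapse isomorphism classes further. This is precisely the phenomenon absent from the abstract situation, where $\tau_r$ does not occur and Corollary \ref{symmetrical isomorphisms number} holds with equality. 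I therefore expect the only genuine work to be routine bookkeeping: checking that the orbit map is well defined and surjective, and recording that the normalizer-conjugation can only merge, never split, isomorphism classes, which is exactly what produces $\leq$ in place of $=$.
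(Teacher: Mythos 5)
Your proposal is correct and follows essentially the same route as the paper: the paper obtains this theorem by reading the non-symmetric Theorem \ref{profinite isomorphism number} (itself a combination of Theorem \ref{1} and Proposition \ref{pushout profinite orbit}) through the wreath-product action $Aut(G)\wr C_2$ that additionally permits swapping the two isomorphic factors, exactly as you describe. Your diagnosis of why only an inequality holds --- the twist $\tau_r$, $r\in N_G(H)$ with $\overline{\langle G_1, G_2^r\rangle}=G$, from Theorem \ref{1} can merge distinct orbits into one isomorphism class --- is precisely the paper's reason as well.
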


\subsection{Amalgamating given subgroups}\label{profinite amalgamating given subgroups}

\medskip

In this section we fix profinite $OE$-groups $G_1,G_2$ with isomorphic finite  subgroups $H_1\leq G_1$, $H_2\leq G_2$. Consider the family $\mathcal{P}=\mathcal{P}(G_1,G_2,H)$ of all  free profinite products $B=G_1\amalg_{H_1=H_2} G_2$ with  $H_1$ and $H_2$ amalgamated.  We can interpret $H_1,H_2$ as  two fixed  copies of a finite group $H$ in $G_1$ and $G_2$ respectively. Then we can write $G=G_1 \amalg_{\lambda, H, \mu}G_2$, where  $\lambda \in Inj(H,H_1)$ and $\mu \in Inj(H,H_2)$. 

The profinite amalgamated free products $G$ can be seen as push-outs $(\lambda,\mu)$. However, from Theorems \ref{profinite isomorphism number} and  \ref{profinite symmetrical isomorphisms number} we saw that diagonal action preserves the isomorphism of push-outs, that is, we can reduce the problem of counting the classes of isomorphisms in $\mathcal{P}$, to counting the classes of isomorphisms of the push-outs of the form $( id, \mu)$. Thus  as in the abstract case slightly abusing notation, we assume w.l.o.g that $\lambda=id$. 

Note that $N_G(H)$ depends only on the fixed images of the push-out $(\lambda,\mu)$ in $G_1$ and $G_2$. Indeed,  in  Proposition \ref{pushout  profinite orbit}  the images $H_1=\lambda(H)$ and $H_2=\mu(H)$ of $H$ in $G_i$ are fixed in our case and so $Inj(H,G_1) \times Inj(H,G_2)$ are replaced by $Inj(H,H_1)\times Inj(H,H_2)\cong Aut(H)\times Aut(H)$ and $Aut(G_1), Aut(G_2)$ are replaced by $\overline{Aut}_{G_1}(H_1)$, $\overline{Aut}_{G_2}(H_2)$, respectively. But by Proposition \ref{profinite normalizer}    $N_G(H)$ is generated by $N_{G_1}(H_1)$ and $N_{G_2}(H_2)$. On the other hand, $\overline{N}_{G_i}(H_i)$ is normal in $\overline{Aut}_{G_i}(H_i)$, i.e. $\overline{N}_{G_1}(H_1)$ does not depend on multiplication of $\lambda$ by automorphisms of $\overline{Aut}_{G_1}(H_1)$ and $\overline{N}_{G_2}(H)$ does not depend on multiplication of $\mu$ by automorphisms of $\overline{Aut}_{G_2}(H_2)$. 

Define $N^{+}$ to be the following non-empty set $$N^+=\{n\in N_{ G}(H)\mid \overline{\langle  G_1,  G_2^n\rangle}= G\}.$$ Further denote by $\overline{ N}^+$ and $\widetilde N^+$ their natural images in $Aut(H)$ and $Out(H)$, respectively. Define $\overline{Aut}_{G_2}^{\mu}(H)$  similarly as in the abstract case. Then we have the following proposition.


\begin{prop}\label{genero} Let $G=G_1 \amalg_{H,\mu }G_2$ and $B=G_1 \amalg_{H,\mu \alpha}G_2$ for some $\alpha\in Aut(H)$.
\begin{enumerate} 

\item[i)]Suppose that  there is no isomorphism $\gamma: G_1\rightarrow  G_2$ such that $\gamma(H)=H_2=\mu(H)$. Then $ G\cong B $ if and only if $\alpha\in \overline{Aut}^{\mu}_{ G_2}(H)\overline{N}^+\overline{Aut}_{G_1}(H)$.
\item[ii)]Suppose there is an isomorphism $\gamma: G_1\rightarrow  G_2$ such that $\gamma(H)=H_2=\mu(H)$. Then $ G\cong  B$ if and only if  $\alpha\in \overline{Aut}^{\mu}_{G_2}(H)\overline{N}^+\overline{Aut}_{G_1}(H)$ or  
$\xi\alpha^{-1}\xi\in \overline{Aut}^{\mu}_{G_2}(H)\overline{N}^+\overline{Aut}_{ G_1}(H)$, where $\xi=\mu^{-1}\gamma\in Aut(H)$.
\end{enumerate}
\end{prop}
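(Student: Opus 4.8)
The plan is to deduce Proposition \ref{genero} from the profinite isomorphism criterion of Theorem \ref{1} in exactly the way Proposition \ref{gamma} was deduced from Theorem \ref{6}, the only new feature being the normalizing element $r$ that Theorem \ref{1} attaches to the isomorphism. Throughout I would write $G=G_1\amalg_{id,H,\mu}G_2$ and $B=G_1\amalg_{id,H,\mu\alpha}G_2$, so that $\lambda=\eta=id$, the amalgamating map of $G$ is $\mu$ and that of $B$ is $\nu=\mu\alpha$. The computational heart, reused in both parts, is the following: a \emph{factor-preserving} isomorphism $G\cong G_1\amalg_{H,\mu\alpha'}G_2$ exists if and only if $\alpha'\in\overline{Aut}^{\mu}_{G_2}(H)\,\overline{N}^+\,\overline{Aut}_{G_1}(H)$.

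To establish this core statement I would invoke Theorem \ref{1} in its non-swap branch: $G\cong B$ via a factor-preserving isomorphism supplies $\beta_1\in Aut(G_1)$, $\beta_2\in Aut(G_2)$, an element $r\in N_{G}(H)$ with $\overline{\langle G_1,G_2^r\rangle}=G$ (so $r\in N^+$), and $\alpha_0\in Aut(H)$ such that $(\beta_1,\beta_2,\alpha_0)\colon(id,\mu\alpha)\to(id,\mu\tau_r)$ is a morphism of push-outs. Restricting equation (\ref{pushout morphism}) to $H$ and writing $\rho=\tau_r|_H$, $\sigma=\mu^{-1}\beta_2\mu\in\overline{Aut}^{\mu}_{G_2}(H)$ and $\gamma_1=\beta_1|_H\in\overline{Aut}_{G_1}(H)$, the first push-out identity gives $\alpha_0=\gamma_1$ and the second gives $\rho\alpha_0=\sigma\alpha$; solving yields $\alpha=\sigma^{-1}\rho\gamma_1$, which lies in $\overline{Aut}^{\mu}_{G_2}(H)\,\overline{N}^+\,\overline{Aut}_{G_1}(H)$ precisely because $r\in N^+$ forces $\rho\in\overline{N}^+$. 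The converse reverses these equalities: from a factorization $\alpha=\sigma^{-1}\rho\gamma_1$ one recovers admissible $\beta_1,\beta_2$ and chooses $n\in N^+$ with $\tau_n|_H=\rho$, verifies the two push-out identities, and applies Theorem \ref{1}; the generation constraint $\overline{\langle G_1,G_2^{n}\rangle}=G$ demanded by that theorem is exactly the membership $n\in N^+$.

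Part (i) then only needs the reduction to the factor-preserving case. Here the hypothesis that no $\gamma\colon G_1\to G_2$ sends $H$ onto $H_2=\mu(H)$ rules out the swap alternative of Theorem \ref{1} (equivalently of Proposition \ref{compart1}): a factor-interchanging isomorphism would, after absorbing the conjugator into $G_2$ by Proposition \ref{o421}(ii) and composing with an inner automorphism of $G_2$, produce an isomorphism $G_1\to G_2$ carrying $H$ onto $H_2$, a contradiction. For part (ii) the swap is permitted, so I would split on whether the isomorphism $G\cong B$ preserves or interchanges the factors. The preserving case is handled by the core computation above and gives the first alternative. In the interchanging case I would apply Remark \ref{trocaprofinite} to the swapping isomorphism $\hat\theta\colon B=G_1\amalg_{H,\mu\alpha}G_2\to G_1\amalg_{H,\gamma(\mu\alpha)^{-1}\gamma}G_2$; composing the given swap with $\hat\theta^{-1}$ yields a factor-preserving isomorphism $G\cong B'$, where $B'=G_1\amalg_{H,\mu(\xi\alpha^{-1}\xi)}G_2$ after the simplification $\mu^{-1}\gamma(\mu\alpha)^{-1}\gamma=\xi\alpha^{-1}\xi$ with $\xi=\mu^{-1}\gamma$. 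The core statement applied to $G\cong B'$ produces the second alternative $\xi\alpha^{-1}\xi\in\overline{Aut}^{\mu}_{G_2}(H)\,\overline{N}^+\,\overline{Aut}_{G_1}(H)$, and conversely each alternative furnishes one of the two isomorphism types.

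I expect the main obstacle to be the bookkeeping around the conjugator $r$. Unlike the abstract Proposition \ref{gamma}, Theorem \ref{1} only yields an isomorphism of push-outs after twisting $\mu$ by $\tau_r$, and one must check both that $\rho=\tau_r|_H$ genuinely lands in $\overline{N}^+$ in the forward direction and that, in the converse, an element $n\in N^+$ (not merely $n\in N_{G}(H)$) realizing a prescribed $\rho$ is available. This generation condition $\overline{\langle G_1,G_2^{n}\rangle}=G$ is exactly what makes $\overline{Aut}^{\mu}_{G_2}(H)\,\overline{N}^+\,\overline{Aut}_{G_1}(H)$ the correct set, rather than the larger $\overline{Aut}^{\mu}_{G_2}(H)\,\overline{N}_{G}(H)\,\overline{Aut}_{G_1}(H)$; note in particular that $N^+$ need not be a group, so the statement must be read as a product of sets and not as a double coset. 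A secondary delicate point is the ``no $\gamma$ forbids the swap'' reduction in part (i), which relies on moving the conjugating element into $G_2$ via Proposition \ref{o421}(ii) before contradicting the hypothesis.
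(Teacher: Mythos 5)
Your proposal is correct and takes essentially the same route as the paper: both reduce everything to Theorem \ref{1} (together with Proposition \ref{compart1}), treat the factor-preserving case by restricting the push-out identities to $H$ to obtain $\alpha=\sigma^{-1}\rho\,\gamma_1\in\overline{Aut}^{\mu}_{G_2}(H)\,\overline{N}^+\,\overline{Aut}_{G_1}(H)$, and handle the factor-swapping case by composing with the isomorphism $\hat\theta$ of Remark \ref{trocaprofinite} and using the identity $\mu^{-1}\gamma(\mu\alpha)^{-1}\gamma=\xi\alpha^{-1}\xi$. The only differences are cosmetic: you make explicit the converse direction and the role of the generation condition $n\in N^+$, which the paper leaves implicit in its ``if and only if'' appeal to Theorem \ref{1}.
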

\begin{proof}
 By Proposition \ref{compart1} and Theorem \ref{1}  $ G\cong  B$ if and only if there exists an isomorphism $\psi: B\rightarrow  G$ such that $\psi( G_1)= G_1$ and $\psi( G_2)= G_2^r$ or  $\psi( G_1)= G_2^{r}$ and $\psi( G_2)= G_1$ for some $r\in N_{G}(H)$ such that $ G=\overline{\langle  G_1, G_2^r\rangle}$ (note that the second case only occurs when $ G_1\cong  G_2$ and $\psi|_{ G_1}: G_1\rightarrow  G_2$ is an isomorphism that satisfies $\psi(H)=H_2$). From now on the proof will be divided into two cases as follows.\\
  
Case i) If $\psi( G_1)=G_1$ and $\psi( G_2)= G_2^r$ we define $\beta_1=\psi|_{G_1} $ and $\beta_2=\tau_r\psi|_{ G_2}$. Then for all $h\in H$ we have $$\tau_r^{-1}\beta_2\mu\alpha(h)=\psi\mu\alpha(h)=\psi(h)=\beta_1(h)=\mu\beta_1(h)$$ or equivalently $\alpha=\beta_2^{-\mu}\tau_r^{\mu}\beta_1 \in \overline{Aut}^{\mu}_{ G_2}(H)\overline{N}^+\overline{Aut}_{G_1}(H)$.\\ 
 
Case ii) If $\psi( G_1)= G_2^r$ and $\psi( G_2)= G_1$ then by Remark \ref{trocaprofinite} there exist an isomorphism $\hat\theta:B \rightarrow  G_1 \amalg_{ H,  \gamma (\mu\alpha)^{-1} \gamma} G_2 $ that swaps $ G_1$ and $ G_2$, moreover $(\psi\hat\theta^{-1})( G_1)=G_1$ and $(\psi\hat\theta^{-1})( G_2)= G_2^r$. Observe that  $\gamma(\mu\alpha)^{-1}\gamma(h)= \mu(\mu^{-1}\gamma(\mu\alpha)^{-1}\gamma)(h)=\mu( \xi\alpha^{-1}\xi)(h)$ for all $h\in H$.  Define $\beta_1=(\psi\hat\theta^{-1})|_{G_1} $, $\beta_2=\tau_r(\psi\hat\theta^{-1})|_{ G_2}$. Then for all $h\in H$, we have $$\tau_r^{-1}\beta_2(\mu\xi\alpha^{-1}\xi)(h)=
\psi\hat\theta^{-1}(\mu\xi\alpha^{-1}\xi)(h)=\psi\hat\theta^{-1}(h)=\beta_1(h)=\mu\beta_1(h)$$
equivalently $\xi\alpha^{-1}\xi=(\beta_2^{-1})^{\mu}\tau_r^{\mu}\beta_1 \in \overline{Aut}^{\mu}_{ G_2}(H)\overline{N}^+\overline{Aut}_{G_1}(H)$.

\end{proof}

Our goal now is to find formulas to calculate the number of isomorphism classes of the amalgamated free products of profinite $OE$-groups $G_1$ and $G_2$ with finite amalgamation of fixed subgroups $H_1\leq G_1$, $H_2\leq G_2$. Thus interpreting $H_1,H_2$ as one common subgroup $H_1=H=H_2$ of $G=G_1 \coprod_{H} G_2$ (i.e assuming w.l.o.g. $\mu=id$) 
 Theorem \ref{profinite isomorphism number} admits the following simple form in this case.
 
 \begin{cor}\label{profinite fixed images isomorphisms number} Let    $G_1$ and $G_2$  be profinite $OE$-groups having a finite common subgroup $H$. Suppose that  there is no isomorphism $\gamma:G_1\rightarrow G_2$ such that $\gamma(H)=H$. Then the number of isomorphism classes of profinite free amalgamated  products  $G_1\amalg_{H,\mu_2}G_2$, $\mu_2\in Aut(H)$ does not exceed
 
$$|\overline{Aut}_{G_2}(H)\backslash Aut(H)/ \overline{Aut}_{G_1}(H)|.$$
\end{cor}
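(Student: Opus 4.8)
The plan is to imitate the proof of the abstract Corollary~\ref{fixed images isomorphisms number}, deducing the bound directly from the profinite isomorphism criterion of Proposition~\ref{genero}(i); the only new phenomenon is the set $\overline{N}^+$ occurring there, which is exactly what turns the abstract equality into a profinite inequality. First I would reduce to the normal form $\lambda=id$. As recorded just after Theorem~\ref{profinite isomorphism number}, the diagonal action of $Aut(H)$ on $Inj(H,H_1)\times Inj(H,H_2)$ preserves the isomorphism type of the amalgam, so every member of the family under consideration is isomorphic to one of the shape $G_1\amalg_{H,\mu_2}G_2$ with $\mu_2\in Aut(H)$. Hence the set of isomorphism classes is precisely the image of the parametrizing map $\Phi\colon Aut(H)\to\{\text{iso.\ classes}\}$, $\mu_2\mapsto[\,G_1\amalg_{H,\mu_2}G_2\,]$, and it suffices to bound $|\operatorname{im}\Phi|$.

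Next I would show that $\Phi$ factors through the double coset space $\overline{Aut}_{G_2}(H)\backslash Aut(H)/\overline{Aut}_{G_1}(H)$. Since $\overline{\langle G_1,G_2\rangle}=G$, we have $1\in N^+$ and therefore $id\in\overline{N}^+$. Consequently, for any $\mu_2$ the set $\overline{Aut}^{\mu_2}_{G_2}(H)\,\overline{N}^+\,\overline{Aut}_{G_1}(H)$ of Proposition~\ref{genero}(i) contains $\overline{Aut}^{\mu_2}_{G_2}(H)\,\overline{Aut}_{G_1}(H)$. If $\mu_2'$ lies in the same double coset, say $\mu_2'=\beta_2\mu_2\beta_1$ with $\beta_1\in\overline{Aut}_{G_1}(H)$ and $\beta_2\in\overline{Aut}_{G_2}(H)$, then writing $\mu_2'=\mu_2\alpha$ with $\alpha=\mu_2^{-1}\beta_2\mu_2\beta_1=\beta_2^{\mu_2}\beta_1$ exhibits $\alpha\in\overline{Aut}^{\mu_2}_{G_2}(H)\,\overline{Aut}_{G_1}(H)$, so Proposition~\ref{genero}(i) gives $G_1\amalg_{H,\mu_2}G_2\cong G_1\amalg_{H,\mu_2'}G_2$. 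Thus $\Phi$ is constant on double cosets and descends to a surjection $\overline{\Phi}\colon\overline{Aut}_{G_2}(H)\backslash Aut(H)/\overline{Aut}_{G_1}(H)\twoheadrightarrow\operatorname{im}\Phi$.

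Finally, the existence of the surjection $\overline{\Phi}$ yields at once $|\operatorname{im}\Phi|\le|\overline{Aut}_{G_2}(H)\backslash Aut(H)/\overline{Aut}_{G_1}(H)|$, which is the claimed estimate. I expect no real obstacle beyond this bookkeeping, since the substantive work is already carried by Proposition~\ref{genero}. The one point to emphasize — and the reason one obtains only an inequality, in contrast with the equality of Corollary~\ref{fixed images isomorphisms number} — is the middle factor $\overline{N}^+$: conjugating $G_2$ by a normalizer element $n$ with $\overline{\langle G_1,G_2^{\,n}\rangle}=G$ can identify two distinct double cosets with the same isomorphism class, so $\overline{\Phi}$ need not be injective and the double coset count may strictly overcount.
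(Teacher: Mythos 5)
Your proof is correct and takes essentially the same approach as the paper: the paper's entire proof is the single line ``Follows directly from Proposition \ref{genero} (i)'', and your argument simply makes explicit the bookkeeping this conceals --- the reduction to $\lambda=id$, the observation that $id\in\overline{N}^+$ (so membership in a double coset $\overline{Aut}_{G_2}(H)\mu_2\overline{Aut}_{G_1}(H)$ implies the criterion $\alpha=\beta_2^{\mu_2}\beta_1\in\overline{Aut}^{\mu_2}_{G_2}(H)\overline{N}^+\overline{Aut}_{G_1}(H)$), and the resulting surjection from the double coset space onto the set of isomorphism classes. Your closing remark about why only an inequality holds (the factor $\overline{N}^+$ can merge distinct double cosets) is also exactly the point the paper makes in the discussion preceding Theorem \ref{profinite isomorphism number}.
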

\begin{proof}  Follows directly from Proposition \ref{genero} (i).
\end{proof}

Recall that for a subset $S\subseteq Aut(H)$ we use $\widetilde S$ to denote its image in $Out(H)$. 

\begin{cor}\label{profinite fixed images out} Let    $G_1$ and $G_2$  be profinite $OE$-groups having a finite common subgroup $H$. Suppose that  there is no isomorphism $\gamma:G_1\rightarrow G_2$ such that $\gamma(H)=H$. Then the number of isomorphism classes of  profinite free amalgamated  products  $G_1\amalg_{H,\mu_2}G_2$, $\mu_2\in Aut(H)$     does not exceed 
 
$$ |\widetilde{Aut}_{G_2}(H)\backslash Out(H)/ \widetilde{Aut}_{G_1}(H)|.$$
            
\end{cor}            
\begin{proof} Let $Inn_{G}(H)$ 
be the set of all inner automorphisms of a group $G$  that fixes a subgroup $H$ and denote by 
$\overline{Inn}_{G}(H)$ the image of $Inn_{G}(H)$ in $Aut(H)$. Then 
$Inn(H) \leq \overline{Inn}_{G_i}(H) \leq  \overline{Aut}_{G_i}(H)$. Therefore we can factor 
$Inn(H)$ out in the formula of Corollary \ref{profinite fixed images isomorphisms number} to get the needed expression. 
\end{proof}

\bigskip
  Suppose now  there is an isomorphism $\gamma:G_1\longrightarrow G_2$ with $\gamma(H)=H$.
 As in the abstract case define an action of $C_2=\left\langle x \right\rangle$ on $$\overline{Aut}_{G_{2}}(H)\backslash Aut(H)/\overline{Aut}_{G_1}(H)$$ as follows  
\begin{equation}\label{profinite action}
x . \left( \overline{Aut}_{G_2}(H) \alpha \overline{Aut}_{G_1}(H)\right)=\overline{Aut}_{G_2}(H) \gamma\alpha^{-1}\gamma\overline{Aut}_{G_1}(H), 
\end{equation}
for $\alpha\in Aut(H)$.

Then we can deduce from Proposition \ref{genero} (ii)  the following

\begin{cor}\label{profinite symmetrical fixed images isomorphism number} Let    $G_1$ and $G_2$  be profinite $OE$-groups having a finite common subgroup $H$. Suppose that  there is an isomorphism $\gamma:G_1\rightarrow G_2$ such that $\gamma(H)=H$.  Then the number of isomorphism classes of profinite free amalgamated  products $G_1\amalg_{H,\mu_2}G_2$ with $\mu_2\in Aut(H)$  does not exceed $$|(\overline{Aut}_{G_2}(H)\backslash Aut(H)/\overline{Aut}_{G_1}(H))/C_2|,$$ where $C_2$ acts as defined in (\ref{profinite action}).
\end{cor}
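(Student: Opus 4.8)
The plan is to exhibit the set of isomorphism classes as the image of a surjection from $Aut(H)$ that is constant on the $C_2$-orbits of double cosets. Having normalized the first embedding to $id$ as in the paragraph preceding Corollary \ref{profinite fixed images isomorphisms number}, I would write the members of the family as $G_1\amalg_{H,\alpha}G_2$ with $\alpha\in Aut(H)$ and let $q$ be the surjection sending $\alpha$ to the isomorphism class of $G_1\amalg_{H,\alpha}G_2$. It then suffices to prove two assertions: (A) $q$ is constant on each double coset $\overline{Aut}_{G_2}(H)\,\alpha\,\overline{Aut}_{G_1}(H)$; and (B) $q(\alpha)=q(\gamma\alpha^{-1}\gamma)$ for every $\alpha$. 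Granting these, $q$ factors through $(\overline{Aut}_{G_2}(H)\backslash Aut(H)/\overline{Aut}_{G_1}(H))/C_2$ and remains surjective, which yields the claimed bound.

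Both (A) and (B) I would read off from the \emph{if} direction of Proposition \ref{genero}(ii), invoked with its base $\mu$ set equal to $\alpha$ and its $Aut(H)$-displacement set to some $\delta$ (so that the target amalgam is $G_1\amalg_{H,\alpha\delta}G_2$); the crucial and repeatedly used observation is that $1\in N^+$, since trivially $\overline{\langle G_1,G_2\rangle}=G$, so the identity lies in $\overline{N}^+$. For (A), given $\beta_i\in\overline{Aut}_{G_i}(H)$, I take $\delta=\alpha^{-1}\beta_2\alpha\beta_1$ so that $\alpha\delta=\beta_2\alpha\beta_1$; then $\delta=(\alpha^{-1}\beta_2\alpha)\cdot 1\cdot\beta_1$ lies in $\overline{Aut}^{\alpha}_{G_2}(H)\,\overline{N}^+\,\overline{Aut}_{G_1}(H)$, giving $G_1\amalg_{H,\alpha}G_2\cong G_1\amalg_{H,\beta_2\alpha\beta_1}G_2$. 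For (B), I take $\delta=\alpha^{-1}\gamma\alpha^{-1}\gamma$, so that $\alpha\delta=\gamma\alpha^{-1}\gamma$ and $\xi=\mu^{-1}\gamma=\alpha^{-1}\gamma$; the neat point is the one-line computation $\xi\delta^{-1}\xi=1$, which puts $\xi\delta^{-1}\xi$ into $\overline{Aut}^{\alpha}_{G_2}(H)\,\overline{N}^+\,\overline{Aut}_{G_1}(H)$ and so fires the second alternative of Proposition \ref{genero}(ii).

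It remains to confirm that the rule $x\cdot(\overline{Aut}_{G_2}(H)\,\alpha\,\overline{Aut}_{G_1}(H))=\overline{Aut}_{G_2}(H)\,\gamma\alpha^{-1}\gamma\,\overline{Aut}_{G_1}(H)$ of (\ref{profinite action}) is a well-defined involution, so that the quotient by $C_2$ makes sense. Well-definedness follows from the identity $\gamma(\beta_2\alpha\beta_1)^{-1}\gamma=(\gamma\beta_1^{-1}\gamma^{-1})(\gamma\alpha^{-1}\gamma)(\gamma^{-1}\beta_2^{-1}\gamma)$ together with the fact that $\gamma$, carrying $H$ to $H$ and $G_1$ to $G_2$, conjugates $\overline{Aut}_{G_1}(H)$ onto $\overline{Aut}_{G_2}(H)$, while $\gamma^{-1}$ reverses this; that $x^2=id$ is immediate from $\gamma\gamma^{-1}=id$ on $H$. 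I expect the only real care to be bookkeeping---keeping the moving base $\mu=\alpha$ consistent with the twisted group $\overline{Aut}^{\alpha}_{G_2}(H)$---rather than any deep difficulty. Finally, the reason the conclusion is only an inequality is precisely that $\overline{N}^+$ may merge double cosets beyond those already paired by $C_2$, so the descended map from $(\overline{Aut}_{G_2}(H)\backslash Aut(H)/\overline{Aut}_{G_1}(H))/C_2$ need not be injective.
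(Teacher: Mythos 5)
Your proposal is correct and takes essentially the same route as the paper: the paper obtains this corollary precisely by deducing it from Proposition \ref{genero}(ii), which is what you do. You simply make the deduction explicit — the surjection from $Aut(H)$ to isomorphism classes, constancy on double cosets and on $C_2$-flips via the ``if'' direction of Proposition \ref{genero}(ii) together with the observation that $1\in N^+$ (and the computation $\xi\delta^{-1}\xi=1$) — and your explanation of why only an inequality holds (the possible merging of cosets by $\overline{N}^+$) matches the paper's reasoning.
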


\bigskip
Now  put $\tilde\gamma=\gamma Inn(H)\in Out(H)$. We define an action of  $C_2=\left\langle x \right\rangle$ on $\widetilde{Aut}_{G_2}(H)\backslash Out(H)/\widetilde{Aut}_{G_1}(H)$ as follows

\begin{equation}\label{profinite action2}
x. \left( \widetilde{Aut}_{G_2}(H) \widetilde \alpha \widetilde{Aut}_{G_1}(H)\right)=\widetilde{Aut}_{G_2}(H) \tilde\gamma \widetilde \alpha^{-1}\tilde\gamma\widetilde{Aut}_{G_1}(H), 
\end{equation}
for $\widetilde \alpha\in Out(H)$.\\

As in previous case we can factor out $Inn(H)$ from the formulas in Corollary \ref{profinite symmetrical fixed images isomorphism number} to get the following

\begin{cor}\label{symmetrical profinite fixed images isomorphism number2} Let    $G_1$ and $G_2$  be profinite $OE$-groups having a finite common subgroup $H$. Suppose that  there is an isomorphism $\gamma:G_1\rightarrow G_2$ such that $\gamma(H)=H$. Then the number of isomorphism classes of  amalgamated free products $G_1\amalg_{H,\mu_2}G_2$  does not exceed
$$|(\widetilde{Aut}_{G_2}(H)\backslash Out(H)/\widetilde{Aut}_{G_1}(H))/C_2|,$$ where $C_2$ acts as defined in (\ref{profinite action2}).

\end{cor}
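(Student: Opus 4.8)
The plan is to obtain this bound from Corollary~\ref{profinite symmetrical fixed images isomorphism number} by passing from $Aut(H)$ to $Out(H)$, exactly as Corollary~\ref{symmetrical fixed images isomorphism number2} was deduced from Corollary~\ref{symmetrical fixed images isomorphism number} in the abstract setting. Corollary~\ref{profinite symmetrical fixed images isomorphism number} already bounds the number of isomorphism classes of $G_1\amalg_{H,\mu_2}G_2$ by $|(\overline{Aut}_{G_2}(H)\backslash Aut(H)/\overline{Aut}_{G_1}(H))/C_2|$, so it suffices to produce a bijection between this $C_2$-orbit set and the one appearing in the statement.

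First I would record, as in the proof of Corollary~\ref{profinite fixed images out}, that $Inn(H)\leq \overline{Inn}_{G_i}(H)\leq \overline{Aut}_{G_i}(H)$ for $i=1,2$ and that $Inn(H)$ is normal in $Aut(H)$. Consequently each double coset $\overline{Aut}_{G_2}(H)\,\alpha\,\overline{Aut}_{G_1}(H)$ is saturated with respect to $Inn(H)$ on both sides, so the canonical projection $Aut(H)\twoheadrightarrow Out(H)=Aut(H)/Inn(H)$ descends to a well-defined map
$$\overline{Aut}_{G_2}(H)\backslash Aut(H)/\overline{Aut}_{G_1}(H)\longrightarrow \widetilde{Aut}_{G_2}(H)\backslash Out(H)/\widetilde{Aut}_{G_1}(H).$$
This map is a bijection, since $\widetilde{Aut}_{G_i}(H)$ is by definition the image of $\overline{Aut}_{G_i}(H)$ in $Out(H)$ and the full preimage of each target double coset is a single source double coset.

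The key point is that this bijection is $C_2$-equivariant for the actions (\ref{profinite action}) and (\ref{profinite action2}). Because $\gamma(H)=H$, its restriction to $H$ is an automorphism whose class is $\tilde\gamma=\gamma\,Inn(H)\in Out(H)$, and conjugation by $\gamma$ carries $\overline{Aut}_{G_1}(H)$ onto $\overline{Aut}_{G_2}(H)$; this is what makes (\ref{profinite action}) a well-defined involution, and the image of the same identity in $Out(H)$ does the same for (\ref{profinite action2}). Under the projection the representative $\gamma\alpha^{-1}\gamma$ of the image of $\alpha$ under (\ref{profinite action}) maps to $\tilde\gamma\,\widetilde\alpha^{-1}\,\tilde\gamma$, which is precisely the image of $\widetilde\alpha$ under (\ref{profinite action2}). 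Hence the bijection intertwines the two $C_2$-actions and therefore induces a bijection of the orbit sets $(\overline{Aut}_{G_2}(H)\backslash Aut(H)/\overline{Aut}_{G_1}(H))/C_2$ and $(\widetilde{Aut}_{G_2}(H)\backslash Out(H)/\widetilde{Aut}_{G_1}(H))/C_2$. Combined with Corollary~\ref{profinite symmetrical fixed images isomorphism number}, this yields the claimed bound.

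The step I expect to require the most care is the $C_2$-equivariance, precisely because the $C_2$-action is defined through the rule $\alpha\mapsto\gamma\alpha^{-1}\gamma$ rather than an honest left or right translation: one must check both that it descends to a well-defined involution on each double coset space (using $\gamma\,\overline{Aut}_{G_1}(H)\,\gamma^{-1}=\overline{Aut}_{G_2}(H)$ and its image in $Out(H)$) and that the projection commutes with it. The remaining identifications are purely formal quotients by the normal subgroup $Inn(H)$.
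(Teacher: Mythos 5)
Your proposal is correct and follows essentially the same route as the paper, whose proof consists of the single remark that one can ``factor out $Inn(H)$'' from the formula of Corollary~\ref{profinite symmetrical fixed images isomorphism number}; you simply make explicit the details this phrase compresses, namely that $Inn(H)\leq \overline{Aut}_{G_i}(H)$ makes the projection to $Out(H)$ induce a bijection of double coset spaces, and that this bijection intertwines the $C_2$-actions (\ref{profinite action}) and (\ref{profinite action2}). The equivariance check you flag as delicate does rest, as you say, on $\xi\,\overline{Aut}_{G_1}(H)\,\xi^{-1}=\overline{Aut}_{G_2}(H)$ (the paper's Lemma~\ref{equality}), which is exactly what the paper implicitly uses to make both actions well defined.
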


\begin{rem} \label{finite-by-cyclic profinite isomorphism problems} 
The results of Section \ref{Profinite isomorphism problem}  also hold if we assume that $G_i\,(i=1,2)$ are semidirect products $M_i \rtimes \widehat{\mathbb{Z}}$ with $M_i$ finite such that $M_i\neq H$.
This follows from Remark \ref{finite-by-cyclic} (ii).
\end{rem}

We finish the section with a proposition that describes when $G_1\amalg_{H,\mu} G_2$ is isomorphic to a double. 

\begin{prop}\label{isomorphic to double} Let $G_1*_{H,\mu} G_2$ ( $G_1\amalg_{H,\mu} G_2$) be a (profinite)  amalgamated free product of (profinite) $OE$-groups with finite amalgamation.  Suppose
there is an isomorphism $\gamma:G_1 \longrightarrow G_2$ such that $\gamma_{|H}=\mu$. Then $G_1*_{H,\mu} G_2$ ($G_1\amalg_{H, \mu}G_2$) is isomorphic to a (profinite) double $G_1*_H G_1$ ($G_1\amalg_H G_1$).
\end{prop}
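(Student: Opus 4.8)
The plan is to produce the isomorphism directly as an isomorphism of push-out diagrams and then to invoke the observation, recorded right after \eqref{pushout morphism}, that an isomorphism of push-outs induces an isomorphism of the associated (profinite) amalgamated free products. First I would present both groups as push-outs over $H$: taking $\lambda=id$ as throughout the section, the given product $G_1*_{H,\mu}G_2$ is the push-out $(\lambda,\mu)=(id,\mu)$ of the inclusion $H\hookrightarrow G_1$ and the embedding $\mu\colon H\to G_2$, whereas the double $G_1*_HG_1$ is the push-out $(\eta,\nu)=(id,id)$ of two copies of the inclusion $H\hookrightarrow G_1$.

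Next I would verify that the triple $(\beta_1,\beta_2,\alpha)=(id_{G_1},\gamma,id_H)$ is an isomorphism of push-out diagrams $(\eta,\nu)\longrightarrow(\lambda,\mu)$, where $\beta_1=id_{G_1}$ is applied to the first factor, $\beta_2=\gamma\colon G_1\to G_2$ to the second, and $\alpha=id_H$. The only thing to check is the pair of relations in \eqref{morphism of pushouts}: the first, $\lambda\alpha=\beta_1\eta$, is the trivial identity, while the second, $\mu\alpha=\beta_2\nu$, reads $\mu=\gamma_{|H}$, which is exactly the hypothesis. Since $id_{G_1}$, $\gamma$ and $id_H$ are all isomorphisms, this triple is an isomorphism of push-outs and hence induces an isomorphism $G_1*_HG_1\cong G_1*_{H,\mu}G_2$.

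For the profinite statement the identical construction works verbatim: $(id_{G_1},\gamma,id_H)$ is again an isomorphism of profinite push-outs (the single relation to be verified, $\gamma_{|H}=\mu$, is unchanged), and by the profinite universal property already used in Remark \ref{trocaprofinite} it yields $G_1\amalg_HG_1\cong G_1\amalg_{H,\mu}G_2$. I expect no genuine obstacle here; the entire content lies in matching the second embedding, and the hypothesis $\gamma_{|H}=\mu$ is tailored precisely so that $\gamma$ qualifies as the second component $\beta_2$. The only point requiring mild care is the bookkeeping of the two copies of $G_1$ in the double and the observation that the induced map is an isomorphism because each of its three constituents is.
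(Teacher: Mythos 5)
Your proof is correct and is essentially the paper's own argument: the paper exhibits the morphism of push-outs given by $(id_{G_1},\gamma^{-1})$ from $(id,\mu)$ to $(id,id)$, which is exactly the inverse of your isomorphism $(id_{G_1},\gamma,id_H)\colon (id,id)\longrightarrow (id,\mu)$, and both arguments then invoke the fact that an isomorphism of (profinite) push-outs induces an isomorphism of the corresponding (profinite) amalgamated free products.
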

\begin{proof}
Observe that the isomorphisms $id: G_1\longrightarrow  G_1$ and  $\gamma^{-1}:  G_2\longrightarrow  G_1$ is a morphism of push-outs and hence induces   an isomorphism $\psi:G_1*_{H, \mu}G_2 \longrightarrow G_1*_H G_1$ ($\psi:G_1\amalg_{H, \mu}G_2 \longrightarrow G_1\amalg_H G_1$).
\end{proof}

\section{Genus when given subgroups are amalgamated} \label{secao6}

In this section we fix $\Oe$-groups $G_1,G_2$ and their finite common subgroup $H$. This means that there  are two fixed  copies $H_1, H_2$ of $H$ in $G_1$ and $G_2$ that we think of as a common subgroup $H$. We consider the family $\mathcal{W}=\mathcal{W}(G_1,G_2,H)$ of all amalgamated free products $B=G_1*_{H_1=H_2} G_2$.  The objective of the section is to find formulas and limitations for genus $g(G,\mathcal{W})$ with respect to this family. 

 Then the results of Section \ref{given subgroups}  and \ref{profinite amalgamating given subgroups} become relevant. In fact to simplify the notation we shall indetify $H$ with $H_1$ and $H_2$ and apply results of Section \ref{given subgroups} and w.l.o.g. \ref{profinite amalgamating given subgroups}  with $\lambda=id=\mu$. 
\begin{deff} We say that  $G=G_1*_{H}G_2$ is symmetric (resp. profinitely symmetric) if there is an isomorphism  $\gamma: G_1 \rightarrow G_2$ (resp. continuous isomorphism $\gamma: \widehat G_1 \rightarrow \widehat G_2$) such that $\gamma(H)=H$. 
\end{deff}

\begin{lem}\label{equality} Let $G=G_1*_{H}G_2$ (resp. $G=G_1\amalg_{H}G_2$) be a symmetric (resp. profinitely symmetric) free product of $OE$-groups with finite amalgamation and $\gamma: G_1 \rightarrow G_2$ be an isomorphism (resp. continuous isomorphism)  such that $\gamma(H)=H$. Then $\overline{Aut}_{ G_1}(H)=\left( \overline{Aut}_{ G_2}(H)\right)^{\xi}$ and $\widetilde{Aut}_{ G_1}(H)=\left( \widetilde{Aut}_{ G_2}(H)\right)^{\tilde\xi}$, where $\xi=\gamma|_{H}$ and $\tilde\xi$ is image of $\xi$ in $Out(H)$. 
\end{lem}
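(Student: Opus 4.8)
The plan is to show that conjugation by the symmetrizing isomorphism $\gamma$ transports $Aut_{G_1}(H)$ bijectively onto $Aut_{G_2}(H)$, and that passing to restrictions on $H$ converts this conjugation into conjugation by $\xi=\gamma|_H$. The two displayed equalities are then just the image of this statement in $Aut(H)$ and in $Out(H)$ respectively.

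First I would record that since $\gamma(H)=H$ we also have $\gamma^{-1}(H)=H$. Hence for any $\beta_1\in Aut_{G_1}(H)$ the automorphism $\gamma\beta_1\gamma^{-1}\in Aut(G_2)$ satisfies $\gamma\beta_1\gamma^{-1}(H)=\gamma\beta_1(H)=\gamma(H)=H$, so $\gamma\beta_1\gamma^{-1}\in Aut_{G_2}(H)$. Running the same computation with $\gamma^{-1}$ in place of $\gamma$ shows that the inverse map is conjugation by $\gamma^{-1}$, so $\beta\mapsto\gamma\beta\gamma^{-1}$ is a bijection $Aut_{G_1}(H)\to Aut_{G_2}(H)$.

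Next I would restrict to $H$. For $h\in H$ we have $\gamma^{-1}(h)\in H$, and therefore $(\gamma\beta_1\gamma^{-1})|_H=\xi\circ(\beta_1|_H)\circ\xi^{-1}$; that is, the restriction of $\gamma\beta_1\gamma^{-1}$ is the $\xi$-conjugate of the restriction of $\beta_1$. Taking images in $Aut(H)$ this reads $\overline{Aut}_{G_2}(H)=\xi\,\overline{Aut}_{G_1}(H)\,\xi^{-1}$, which, in the paper's convention $x^{g}=g^{-1}xg$, is exactly $\overline{Aut}_{G_1}(H)=\left(\overline{Aut}_{G_2}(H)\right)^{\xi}$.

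Finally I would project to $Out(H)$: the homomorphism $Aut(H)\to Out(H)$ carries $Inn(H)$ into itself and sends conjugation by $\xi$ to conjugation by $\tilde\xi=\xi\,Inn(H)$, so the previous equality descends to $\widetilde{Aut}_{G_1}(H)=\left(\widetilde{Aut}_{G_2}(H)\right)^{\tilde\xi}$. The profinite case is handled by the identical argument, since $\gamma$ and all the automorphisms of $\widehat G_i$ in question are continuous, so both conjugation and restriction to $H$ remain continuous. There is no genuine obstacle here; the only step that truly uses the hypothesis is the invariance check $\gamma\beta_1\gamma^{-1}(H)=H$, and care is only needed to keep track of the direction of the conjugation relative to the convention $x^{g}=g^{-1}xg$.
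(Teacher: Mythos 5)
Your proposal is correct and uses essentially the same argument as the paper: conjugation by $\gamma$ carries one relative automorphism group to the other, restriction to $H$ turns this into conjugation by $\xi$, and projecting to $Out(H)$ gives the second equality. The only difference is cosmetic — the paper starts from $\varphi\in Aut_{G_2}(H)$ and writes the single identity $\xi^{-1}\varphi\xi(h)=\gamma^{-1}\varphi\gamma(h)$, leaving the reverse inclusion implicit, whereas you spell out both directions and the bijectivity explicitly.
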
 

\begin{proof} 
Take $\varphi$ in ${Aut}_{ G_2}(H).$ Then for $h\in H$ we have $\xi^{-1}\varphi\xi(h)=\gamma^{-1}\varphi\gamma(h)$. So $\gamma^{-1}\varphi\gamma\in \overline{Aut}_{ G_1}(H)$.  To complete the proof one  takes the images of the preceding equality in $Out(H).$
\end{proof}

The following result is an immediate consequence of Lemma \ref{equality} and Proposition \ref{cp1}. 

\begin{cor}\label{normalizer in Out(H)} 
Let $G=G_1*_{H}G_2$ be a symmetric free product of $\Oe$-groups with finite amalgamation and let $\gamma:G_1 \longrightarrow G_2$ be an  isomorphism such that $\gamma(H)=H$ and $\xi=\gamma|_{H}$. Then 
$$\overline N_G(H)=\langle \overline N_{G_2}(H), \overline N_{G_2}(H)^{\xi}\rangle, \overline N_{\widehat G}(H)=\langle \overline N_{\widehat G_2}(H), \overline N_{\widehat G_2}(H)^{\xi}\rangle$$ and 
$\widetilde N_G(H)=\langle \widetilde N_{G_2}(H), \widetilde N_{G_2}(H)^{\tilde\xi}\rangle$, $\widetilde N_{\widehat G}(H)=\langle \widetilde N_{\widehat G_2}(H), \widetilde N_{\widehat G_2}(H)^{\tilde\xi}\rangle$.
\end{cor}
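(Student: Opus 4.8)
The plan is to combine the amalgam description of the normalizers from Proposition \ref{cp1} with the fact that $\gamma$ (and its profinite extension) matches up the two normalizers exactly as in the proof of Lemma \ref{equality}. Everything reduces to two elementary observations: that an isomorphism carrying $H$ to $H$ induces on $Aut(H)$ the conjugation by $\xi=\gamma|_H$, and that the image of an amalgam in the finite group $Aut(H)$ is generated by the images of its factors.

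First I would record the normalizer analogue of Lemma \ref{equality}. Since $\gamma$ is an isomorphism with $\gamma(H)=H$, it carries $N_{G_1}(H)$ bijectively onto $N_{G_2}(H)$, and for $n\in N_{G_1}(H)$ one has $\gamma\tau_n\gamma^{-1}=\tau_{\gamma(n)}$ as automorphisms of $G_2$. Both $\tau_n$ and $\tau_{\gamma(n)}$ preserve $H$, so restricting to $H$ and using $\gamma|_H=\xi$ gives $\xi(\tau_n|_H)\xi^{-1}=\tau_{\gamma(n)}|_H$, i.e. $\tau_n|_H=(\tau_{\gamma(n)}|_H)^{\xi}$ in the paper's convention $\varphi^{\xi}=\xi^{-1}\varphi\xi$. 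Letting $n$ range over $N_{G_1}(H)$ (so that $\gamma(n)$ ranges over $N_{G_2}(H)$) yields $\overline N_{G_1}(H)=(\overline N_{G_2}(H))^{\xi}$, and passing to $Out(H)$ gives $\widetilde N_{G_1}(H)=(\widetilde N_{G_2}(H))^{\tilde\xi}$. The same computation with the induced continuous isomorphism $\widehat\gamma\colon\widehat G_1\to\widehat G_2$, which still satisfies $\widehat\gamma(H)=H$ and $\widehat\gamma|_H=\xi$ because $H$ is finite, produces the profinite versions $\overline N_{\widehat G_1}(H)=(\overline N_{\widehat G_2}(H))^{\xi}$ and $\widetilde N_{\widehat G_1}(H)=(\widetilde N_{\widehat G_2}(H))^{\tilde\xi}$.

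Next I would invoke Proposition \ref{cp1}(i): $N_G(H)=N_{G_1}(H)*_H N_{G_2}(H)$, so $N_G(H)$ is generated by $N_{G_1}(H)$ and $N_{G_2}(H)$. The conjugation map $N_G(H)\to Aut(H)$, $g\mapsto\tau_g|_H$, is a homomorphism into the finite group $Aut(H)$, hence its image $\overline N_G(H)$ is generated by the images of the two factors, $\overline N_G(H)=\langle\overline N_{G_1}(H),\overline N_{G_2}(H)\rangle$. Substituting the identity from the previous step gives $\overline N_G(H)=\langle\overline N_{G_2}(H),\overline N_{G_2}(H)^{\xi}\rangle$, and composing with $Aut(H)\to Out(H)$ yields the corresponding statement for $\widetilde N_G(H)$. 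For the profinite claims the argument is identical, using Proposition \ref{cp1}(ii) for the amalgam $N_{\widehat G}(H)=N_{\widehat G_1}(H)\amalg_H N_{\widehat G_2}(H)$ and noting that, since $Aut(H)$ is finite, the closed image of this profinite amalgam under the continuous conjugation map is again generated by the images of its factors.

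I expect no serious obstacle; this really is an immediate corollary. The only points that demand care are the bookkeeping of the conjugation convention, so that the relation comes out as $\overline N_{G_2}(H)^{\xi}$ rather than $\overline N_{G_2}(H)^{\xi^{-1}}$, and the routine remark that passing from the abstract amalgam to the profinite one changes nothing because the target $Aut(H)$ (hence also $Out(H)$) is finite, so topological and abstract generation coincide.
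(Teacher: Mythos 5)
Your proof is correct and follows exactly the route the paper intends: the paper states this corollary as an immediate consequence of Lemma \ref{equality} and Proposition \ref{cp1} without writing out a proof, and your argument is precisely that derivation spelled out (the conjugation identity $\gamma\tau_n\gamma^{-1}=\tau_{\gamma(n)}$ giving the normalizer analogue of Lemma \ref{equality}, then generation from the amalgam decompositions of $N_G(H)$ and $N_{\widehat G}(H)$, with finiteness of $Aut(H)$ handling topological generation). Your bookkeeping of the convention $x^{g}=g^{-1}xg$ and the passage to $\widehat\gamma$ for the profinite statements are both handled correctly.
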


\bigskip
Now we divide our consideration into three cases: profinitely non-symmetric, non-symmetric but profinitely symmetric and symmetric. 

\subsection{Profinitely non-symmetric case}

In this subsection we will consider the case where $G=G_1*_{H}G_2$ is not profinitely symmetric, i.e.  there is no isomorphism $\gamma:\widehat G_1\rightarrow \widehat G_2$ such that $\gamma(H)=H$. 
\\
 By Proposition \ref{genero} (i) combined with Corollary \ref{fixed images isomorphisms number} and Theorem \ref{fixed images out}
  we  just need  to restrict the natural map $$\kappa:Out(H)\longrightarrow \widetilde{Aut}_{G_2}(H)\backslash Out(H)/\widetilde{Aut}_{G_1}(H)$$ to $\widetilde{Aut}_{\widehat G_2}(H)\widetilde N^+\widetilde{Aut}_{\widehat G_1}(H)$ in this case, where  $\widetilde N^+$ is the natural image of the subset $N^+=\{n\in N_{\widehat G}(H)\mid \overline{\langle  G_1,  G_2^n\rangle}=\widehat G\}$ in $Out(H)$. In fact, we can think of $\K=\widetilde{Aut}_{\widehat G_2}(H)\widetilde N^+\widetilde{Aut}_{\widehat G_1}(H)$ as a subset of $Out(H)$ on which $\widetilde{Aut}_{G_2}(H)$ acts on the left  and $\widetilde{Aut}_{G_1}(H)$ acts on the right and so $\kappa(\K)$  can be written as $\widetilde{Aut}_{G_2}(H)\backslash \K/\widetilde{Aut}_{G_1}(H)$.
  Thus we obtain the following 
  
\begin{thm}\label{fixed subgroups} Let $G=G_1*_{H}G_2$ be a profinitely non-symmetric free product of $\Oe$-groups with finite amalgamation.  Then the genus 
  $g(G,\W)$ equals
$$|\widetilde{Aut}_{G_2}(H)\backslash \widetilde{Aut}_{\widehat G_2}(H)\widetilde N^+\widetilde{Aut}_{\widehat G_1}(H)/\widetilde{Aut}_{G_1}(H)|.\eqno{(*)}$$
\end{thm}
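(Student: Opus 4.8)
The plan is to parametrize the abstract isomorphism classes inside $\W$ and then single out those whose profinite completion is $\widehat G$. Writing $B_\mu=G_1*_{H,\mu}G_2$ for $\mu\in Aut(H)$ (with $\lambda=id$ fixed as agreed), every member of $\W$ is of this form, and since $H$ is finite the amalgamated free product is residually finite with $\widehat{B_\mu}=\widehat G_1\amalg_{H,\mu}\widehat G_2$; in particular $\widehat{B_{id}}=\widehat G$. Because $G$ is profinitely non-symmetric there is no isomorphism $\widehat G_1\to\widehat G_2$ fixing $H$, hence a fortiori no such abstract isomorphism $G_1\to G_2$, so both Theorem \ref{fixed images out} and Proposition \ref{genero}(i) are available.

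First I would record the two relevant equivalences. By Theorem \ref{fixed images out} (equivalently Corollary \ref{fixed images isomorphisms number} pushed down to $Out(H)$), $B_\mu\cong B_{\mu'}$ as abstract groups if and only if $\widetilde\mu$ and $\widetilde{\mu'}$ lie in the same double coset of $\widetilde{Aut}_{G_2}(H)\backslash Out(H)/\widetilde{Aut}_{G_1}(H)$; thus the isomorphism classes of $\W$ are faithfully indexed by these double cosets. On the other hand, applying Proposition \ref{genero}(i) to the profinite groups $\widehat G_1,\widehat G_2$ with base point $id$ and variable $\alpha=\mu$ shows that $\widehat{B_\mu}\cong\widehat G$ if and only if $\mu\in\overline{Aut}_{\widehat G_2}(H)\overline N^+\overline{Aut}_{\widehat G_1}(H)$, which descends modulo $Inn(H)$ (note $Inn(H)\subseteq\overline N^+$ because $H\subseteq N^+$) to the condition $\widetilde\mu\in\K=\widetilde{Aut}_{\widehat G_2}(H)\widetilde N^+\widetilde{Aut}_{\widehat G_1}(H)$.

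Next I would combine the two. By definition $g(G,\W)$ counts the abstract isomorphism classes $[\mu]$ with $\widehat{B_\mu}\cong\widehat G$, i.e. the double cosets represented by some $\widetilde\mu\in\K$. The point that makes this a clean count is that $\K$ is saturated, i.e. a union of $(\widetilde{Aut}_{G_2}(H),\widetilde{Aut}_{G_1}(H))$-double cosets: since every automorphism of $G_i$ preserving $H$ extends to one of $\widehat G_i$ preserving $H$ with the same restriction to $H$, we have $\widetilde{Aut}_{G_i}(H)\subseteq\widetilde{Aut}_{\widehat G_i}(H)$, whence $\widetilde{Aut}_{G_2}(H)\,\K=\K=\K\,\widetilde{Aut}_{G_1}(H)$. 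Consequently the restriction to $\K$ of the quotient map $\kappa\colon Out(H)\to\widetilde{Aut}_{G_2}(H)\backslash Out(H)/\widetilde{Aut}_{G_1}(H)$ has image $\widetilde{Aut}_{G_2}(H)\backslash\K/\widetilde{Aut}_{G_1}(H)$, and this image is exactly the set of classes being counted, giving formula $(*)$.

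The main obstacle I anticipate is the bookkeeping in the second step: verifying that Proposition \ref{genero}(i) applies with $G_i$ replaced by $\widehat G_i$ (which requires identifying $\widehat{B_\mu}$ with the profinite amalgam $\widehat G_1\amalg_{H,\mu}\widehat G_2$, legitimate here because $H$ is finite) and checking the compatibility of passing from $Aut(H)$ to $Out(H)$ — in particular that $\overline N^+$ contains $Inn(H)$ so that $\widetilde N^+$ is well defined and $\K$ is genuinely the $Out(H)$-image of the $Aut(H)$-level product. Once saturation of $\K$ is established, everything reduces to the formal double-coset identity, so the substantive content lies entirely in confirming that the profinite-completion condition is exactly membership in $\K$.
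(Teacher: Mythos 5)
Your proposal is correct and follows essentially the same route as the paper: the abstract isomorphism classes in $\W$ are indexed by double cosets via Theorem \ref{fixed images out}, the profinite-completion condition is membership in $\K=\widetilde{Aut}_{\widehat G_2}(H)\widetilde N^+\widetilde{Aut}_{\widehat G_1}(H)$ via Proposition \ref{genero}(i), and the count is obtained by restricting the quotient map $\kappa$ to the saturated set $\K$. The only difference is that you make explicit some points the paper leaves implicit (profinite non-symmetry implying abstract non-symmetry, $Inn(H)\subseteq\overline N^+$, and the saturation of $\K$ via $\widetilde{Aut}_{G_i}(H)\subseteq\widetilde{Aut}_{\widehat G_i}(H)$), which is a faithful elaboration rather than a different argument.
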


If $H$ is a semidirect factor or self-normalized in $\widehat G_1$ or $\widehat G_2$ then $\widetilde{N}^+$ disappears from the formula $(*)$ and so we have the following

\begin{cor}\label{semidirect} 
Let $G=G_1*_{H}G_2$ be a profinitely non-symmetric free product of $\Oe$- groups with finite amalgamation. Suppose $G_1=N\rtimes H$ or $N_{\widehat G_1}(H)=H$. 
Then  $$g(G,\W)= |\widetilde{Aut}_{G_2}(H)\backslash \widetilde{Aut}_{\widehat G_2}(H) \widetilde{Aut}_{\widehat G_1}(H)/\widetilde{Aut}_{G_1}(H)|. $$

\end{cor}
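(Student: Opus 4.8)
The plan is to show that under either hypothesis the middle factor $\widetilde N^+$ in the double-coset expression $(*)$ of Theorem \ref{fixed subgroups} can be absorbed, i.e. that
$$\widetilde{Aut}_{\widehat G_2}(H)\,\widetilde N^+\,\widetilde{Aut}_{\widehat G_1}(H)=\widetilde{Aut}_{\widehat G_2}(H)\,\widetilde{Aut}_{\widehat G_1}(H).$$
The inclusion $\supseteq$ is free: since $G_1$ and $G_2$ generate $G$ densely in $\widehat G=\widehat G_1\amalg_H\widehat G_2$, one has $\overline{\langle G_1,G_2\rangle}=\widehat G$, so $1\in N^+$ and hence the trivial class lies in $\widetilde N^+$. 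For the reverse inclusion it therefore suffices to prove $\widetilde N^+\subseteq\widetilde{Aut}_{\widehat G_2}(H)$, because then $\widetilde{Aut}_{\widehat G_2}(H)\widetilde N^+\subseteq\widetilde{Aut}_{\widehat G_2}(H)$ (the latter being a subgroup containing $\widetilde N^+$), and the claimed equality follows at once, giving the formula for $g(G,\W)$ via Theorem \ref{fixed subgroups}.

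The heart of the argument is to show that both hypotheses force $\widetilde N_{\widehat G_1}(H)=1$ in $Out(H)$; that is, every element of $N_{\widehat G_1}(H)$ acts on $H$ by conjugation as an inner automorphism of $H$. Granting this, I would invoke Proposition \ref{cp1}(ii) to write $N_{\widehat G}(H)=N_{\widehat G_1}(H)\amalg_H N_{\widehat G_2}(H)$. Since $Out(H)$ is finite and $\langle N_{\widehat G_1}(H),N_{\widehat G_2}(H)\rangle$ is dense in $N_{\widehat G}(H)$, the image $\widetilde N_{\widehat G}(H)$ of the natural continuous map $N_{\widehat G}(H)\to Out(H)$ equals $\langle\widetilde N_{\widehat G_1}(H),\widetilde N_{\widehat G_2}(H)\rangle=\widetilde N_{\widehat G_2}(H)$. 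As conjugation by a normalizing element of $\widehat G_2$ is an $H$-invariant automorphism of $\widehat G_2$, we have $\widetilde N_{\widehat G_2}(H)\subseteq\widetilde{Aut}_{\widehat G_2}(H)$, whence $\widetilde N^+\subseteq\widetilde N_{\widehat G}(H)\subseteq\widetilde{Aut}_{\widehat G_2}(H)$, exactly what is needed.

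It remains to verify the claim $\widetilde N_{\widehat G_1}(H)=1$ in each case. If $N_{\widehat G_1}(H)=H$, this is immediate, since conjugation by elements of $H$ produces only inner automorphisms of $H$. The case $G_1=N\rtimes H$ is the one I expect to be the main obstacle. First one must check that completion preserves the complement, i.e. $\widehat G_1=\overline N\rtimes H$ with $\overline N$ the closure of $N$; this uses that the retraction $G_1\to H$ survives profinite completion because $H$ is finite, so the splitting $H\hookrightarrow\widehat G_1$ persists and $H\cap\overline N=1$. Then I would compute the normalizer directly: writing $g=nh$ with $n\in\overline N$ and $h\in H$, the condition $H^{g}=H$ reduces to $H^{n}=H$, which (since $H\cap\overline N=1$) forces $n\in C_{\overline N}(H)$. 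Thus $N_{\widehat G_1}(H)=C_{\overline N}(H)\rtimes H$, and every element of it acts on $H$ precisely as the corresponding $h\in H$ does, i.e. by an inner automorphism. Hence $\widetilde N_{\widehat G_1}(H)=1$, and the proof is complete.
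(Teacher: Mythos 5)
Your proposal is correct and follows essentially the same route as the paper: the paper's proof also reduces everything to showing $N_{\widehat G_1}(H)=C_{\widehat G_1}(H)H$ (hence $\widetilde N_{\widehat G_1}(H)=1$ in $Out(H)$) in both cases, and then absorbs $\widetilde N^+$ using Proposition \ref{cp1}(ii) together with $1\in N^+$, exactly as you do. The only difference is that you spell out the details the paper leaves implicit (the splitting $\widehat G_1=\overline N\rtimes H$, the computation $N_{\widehat G_1}(H)=C_{\overline N}(H)\rtimes H$, and the density argument for $\widetilde N_{\widehat G}(H)=\langle \widetilde N_{\widehat G_1}(H),\widetilde N_{\widehat G_2}(H)\rangle$), all of which are accurate.
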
 

\begin{proof} In either case $N_{\widehat G_1}(H)=C_{\widehat G_1}(H)H$ and so $\widetilde N_{\widehat G_1}(H)=1$.

\end{proof}

The set $N^+$ is quite mysterious, since it is not clear how to decide whether $\overline{\langle  G_1, G_2^n\rangle}$ generate $\widehat G$ (see \cite[Remark 4.14]{GZ} in the case of finite groups).
If $\widetilde{N}_{\widehat G}(H)=\widetilde{N}_{\widehat G_2}(H)\widetilde N_{\widehat G_1}(H)$  then $\widetilde N^+$ disappears from the formula $(*)$. 

\begin{cor}\label{semidirect} Within hypothesis of Theorem \ref{fixed subgroups} suppose $$\widetilde N_{\widehat G}(H)=\widetilde N_{\widehat G_2}(H)\widetilde N_{\widehat G_1}(H).$$ 
Then  $$g(G,\W)= |\widetilde{Aut}_{G_2}(H)\backslash \widetilde{Aut}_{\widehat G_2}(H) \widetilde{Aut}_{\widehat G_1}(H)/\widetilde{Aut}_{G_1}(H)|. $$

\end{cor}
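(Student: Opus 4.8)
The plan is to derive the statement directly from Theorem~\ref{fixed subgroups} by showing that, under the extra hypothesis, the subset $\widetilde{Aut}_{\widehat G_2}(H)\widetilde N^+\widetilde{Aut}_{\widehat G_1}(H)$ of $Out(H)$ occurring in formula $(*)$ collapses to $\widetilde{Aut}_{\widehat G_2}(H)\widetilde{Aut}_{\widehat G_1}(H)$. Once these two subsets of $Out(H)$ are shown to coincide, the two double coset spaces $\widetilde{Aut}_{G_2}(H)\backslash\,\cdot\,/\widetilde{Aut}_{G_1}(H)$ are literally the same set and hence have equal cardinality, which is exactly the asserted equality of genera.

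First I would record two elementary facts. Since $\overline{\langle G_1,G_2\rangle}=\widehat G$, the identity lies in $N^+$ (this is why $N^+$ is nonempty), so $1\in\widetilde N^+$ and therefore $\widetilde{Aut}_{\widehat G_2}(H)\widetilde{Aut}_{\widehat G_1}(H)\subseteq\widetilde{Aut}_{\widehat G_2}(H)\widetilde N^+\widetilde{Aut}_{\widehat G_1}(H)$; this gives one inclusion for free. The key observation for the reverse inclusion is that $\widetilde N_{\widehat G_i}(H)\subseteq\widetilde{Aut}_{\widehat G_i}(H)$ for $i=1,2$: an element $n\in N_{\widehat G_i}(H)$ induces the inner automorphism $\tau_n$ of $\widehat G_i$, which leaves $H$ invariant and hence lies in $Aut_{\widehat G_i}(H)$, so its class in $Out(H)$ belongs to $\widetilde{Aut}_{\widehat G_i}(H)$.

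With these in place I would argue as follows. By definition $N^+\subseteq N_{\widehat G}(H)$, hence $\widetilde N^+\subseteq\widetilde N_{\widehat G}(H)$, and the hypothesis gives $\widetilde N_{\widehat G}(H)=\widetilde N_{\widehat G_2}(H)\widetilde N_{\widehat G_1}(H)$. Thus any $\widetilde n\in\widetilde N^+$ factors as $\widetilde n=\widetilde n_2\widetilde n_1$ with $\widetilde n_i\in\widetilde N_{\widehat G_i}(H)$. For arbitrary $\widetilde a\in\widetilde{Aut}_{\widehat G_2}(H)$ and $\widetilde b\in\widetilde{Aut}_{\widehat G_1}(H)$ I would rewrite $\widetilde a\,\widetilde n\,\widetilde b=(\widetilde a\,\widetilde n_2)(\widetilde n_1\,\widetilde b)$; since $\widetilde n_2\in\widetilde{Aut}_{\widehat G_2}(H)$ and $\widetilde n_1\in\widetilde{Aut}_{\widehat G_1}(H)$ by the key observation, and these are subgroups of $Out(H)$, the first factor lies in $\widetilde{Aut}_{\widehat G_2}(H)$ and the second in $\widetilde{Aut}_{\widehat G_1}(H)$. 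This yields $\widetilde{Aut}_{\widehat G_2}(H)\widetilde N^+\widetilde{Aut}_{\widehat G_1}(H)\subseteq\widetilde{Aut}_{\widehat G_2}(H)\widetilde{Aut}_{\widehat G_1}(H)$, giving the required equality of subsets and hence of the double coset spaces.

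The argument is essentially formal once the containments are set up, so I do not expect a serious obstacle; the only point demanding care is that $N^+$ is not a subgroup of $N_{\widehat G}(H)$, so one cannot treat it as such. The collapse must be obtained purely by absorbing the genuinely group-theoretic normalizer images $\widetilde N_{\widehat G_i}(H)$ into the ambient automorphism subgroups $\widetilde{Aut}_{\widehat G_i}(H)$, using the factorization hypothesis to split an arbitrary element of $\widetilde N^+$ across the two sides. I would also double-check that the side-assignment in the factorization (with $\widetilde N_{\widehat G_2}(H)$ on the left and $\widetilde N_{\widehat G_1}(H)$ on the right) matches the order of the factors in formula $(*)$, which it does.
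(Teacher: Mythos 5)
Your proof is correct and takes essentially the same route as the paper's: both reduce the corollary to the set equality $\widetilde{Aut}_{\widehat G_2}(H)\,\widetilde N^+\,\widetilde{Aut}_{\widehat G_1}(H)=\widetilde{Aut}_{\widehat G_2}(H)\,\widetilde{Aut}_{\widehat G_1}(H)$ by combining $\widetilde N^+\subseteq\widetilde N_{\widehat G}(H)=\widetilde N_{\widehat G_2}(H)\widetilde N_{\widehat G_1}(H)$ with the absorption $\widetilde N_{\widehat G_i}(H)\leq\widetilde{Aut}_{\widehat G_i}(H)$, and then invoke formula $(*)$ of Theorem \ref{fixed subgroups}. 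Your writeup is in fact slightly more complete than the paper's, which leaves both the easy reverse inclusion (via $1\in\widetilde N^+$) and the justification that normalizer images land in $\widetilde{Aut}_{\widehat G_i}(H)$ implicit.
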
 

\begin{proof}  Recall that $N^+\subseteq N_{\widehat G}(H)$ and $N_{\widehat G}(H)=\overline{\langle N_{\widehat G_1}(H),N_{\widehat G_2}(H)\rangle}$ by Proposition \ref{cp1} (ii). So $\widetilde N^+\subseteq \widetilde N_{\widehat G_2}(H)  \widetilde N_{\widehat G_1}(H)  \leq \widetilde{Aut}_{\widehat G_2}(H) \widetilde{Aut}_{\widehat G_1}(H)$.
\end{proof}

In the next theorem we list the situations when the condition of Corollary \ref{semidirect} is satisfied and so we can give an exact formula for the genus.

\begin{thm}\label{3 formula} Let $G=G_1*_H G_2$ be a profinitely non-symmetric amalgamated free product of $\Oe$-groups $G_1,G_2$ with finite common subgroup $H$. 
 Then  $g(G,\mathcal{W})=|\widetilde{Aut}_{G_2}(H)\backslash \widetilde{Aut}_{\widehat G_2}(H) \widetilde{Aut}_{\widehat G_1}(H)/\widetilde{Aut}_{G_1}(H)|$ if one of the following conditions is satisfied:
\begin{itemize}
\item[(i)] $H$ is central in $G_1$ or  in $G_2,$
\item[(ii)] $H$ is a direct factor of $N_{\widehat G_1}(H)$ or of $N_{\widehat G_2}(H)$,
\item[(iii)]  $Out(H)$ is abelian (in particular if $H$ is cyclic, also the case for almost all finite simple groups),
\item[(iv)] $H$ is self-normalizing in $\widehat G_1$ or $\widehat G_2$,
\item[(v)] $H$ is a retract of $G_1$ or $G_2$.

\end{itemize}

\end{thm}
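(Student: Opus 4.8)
The plan is to show that each of the five conditions forces the hypothesis of Corollary~\ref{semidirect}, namely $\widetilde N_{\widehat G}(H)=\widetilde N_{\widehat G_2}(H)\widetilde N_{\widehat G_1}(H)$; granting this, the displayed formula is immediate, since then $\widetilde N^+\subseteq \widetilde N_{\widehat G}(H)=\widetilde N_{\widehat G_2}(H)\widetilde N_{\widehat G_1}(H)\subseteq \widetilde{Aut}_{\widehat G_2}(H)\widetilde{Aut}_{\widehat G_1}(H)$, so $\widetilde N^+$ is absorbed and disappears from $(*)$. Throughout I will use that, by Proposition~\ref{cp1}(ii) together with Corollary~\ref{normalizer in Out(H)} and passage to images in $Out(H)$, one has $\widetilde N_{\widehat G}(H)=\langle \widetilde N_{\widehat G_1}(H),\widetilde N_{\widehat G_2}(H)\rangle$. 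Hence it suffices either to make this generated subgroup equal to the product, or to kill one of the two factors.

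For (i), (ii), (iv) and (v) the common mechanism is to prove $\widetilde N_{\widehat G_i}(H)=1$ for $i=1$ or $2$, i.e.\ that $N_{\widehat G_i}(H)$ acts on $H$ only by inner automorphisms, so that $\overline N_{\widehat G_i}(H)=Inn(H)$. Granting this for one index, the generated subgroup $\langle \widetilde N_{\widehat G_1}(H),\widetilde N_{\widehat G_2}(H)\rangle$ collapses to the other factor, which trivially equals the product, and Corollary~\ref{semidirect} applies. The individual verifications are short: in (iv) we have $N_{\widehat G_i}(H)=H$ outright; in (ii) a direct decomposition $N_{\widehat G_i}(H)=H\times K$ forces $K\leq C_{\widehat G_i}(H)$, so the conjugation action reduces to $Inn(H)$; in (i) the centrality of $H$ in $G_i$ passes to $\widehat G_i$ by density and continuity (each $C_{\widehat G_i}(h)$ is closed and contains the dense subgroup $G_i$, whence $H\leq Z(\widehat G_i)$ and $\overline N_{\widehat G_i}(H)=1$).

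Condition (v) is the one requiring the genuine computation. A retraction $\rho\colon G_i\to H$ with $\rho|_H=\mathrm{id}$ induces a continuous retraction $\widehat\rho\colon\widehat G_i\to H$, giving a semidirect decomposition $\widehat G_i=\widehat N\rtimes H$ with $\widehat N=\ker\widehat\rho$. Writing $g=nh$ with $n\in\widehat N$, $h\in H$, one checks that $g\in N_{\widehat G_i}(H)$ iff $n\in N_{\widehat N}(H)$; and if $n\in\widehat N$ normalizes $H$ then for $x\in H$ the element $n^{-1}xnx^{-1}$ lies in $\widehat N\cap H=1$, so $n$ centralizes $H$. Hence $N_{\widehat G_i}(H)=C_{\widehat G_i}(H)H$, again $\overline N_{\widehat G_i}(H)=Inn(H)$, and $\widetilde N_{\widehat G_i}(H)=1$. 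This is exactly the reduction recorded in the first part of Corollary~\ref{semidirect} (with $G_i=N\rtimes H$).

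Finally, condition (iii) is handled separately and without killing a factor: when $Out(H)$ is abelian, the subgroup generated by the two subgroups $\widetilde N_{\widehat G_1}(H)$ and $\widetilde N_{\widehat G_2}(H)$ is simply their product, so $\widetilde N_{\widehat G}(H)=\widetilde N_{\widehat G_2}(H)\widetilde N_{\widehat G_1}(H)$ directly and Corollary~\ref{semidirect} applies. I expect the main obstacle to be the normalizer computation in (v) and the descent of the structural hypotheses in (i) and (v) from $G_i$ to $\widehat G_i$; everything else is a routine translation between the $\overline N$-level and the $\widetilde N$-level by factoring out $Inn(H)$, combined with the generation statement $\widetilde N_{\widehat G}(H)=\langle \widetilde N_{\widehat G_1}(H),\widetilde N_{\widehat G_2}(H)\rangle$.
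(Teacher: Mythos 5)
Your proposal is correct and takes essentially the same route as the paper: every case is funneled into Corollary \ref{semidirect} by showing $\widetilde N_{\widehat G_i}(H)=1$ for one index in cases (i), (ii), (iv), (v), and by using commutativity of $Out(H)$ to turn the generated subgroup $\langle \widetilde N_{\widehat G_1}(H),\widetilde N_{\widehat G_2}(H)\rangle$ into the product in case (iii). The paper's proof is simply a terser version of yours, asserting these reductions without the verifications you spell out (your computation for the retract case (v) is exactly the content of the paper's other corollary with hypothesis $G_1=N\rtimes H$, namely $N_{\widehat G_1}(H)=C_{\widehat G_1}(H)H$).
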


\begin{proof}
\smallskip
 If (i) holds then either  $\widetilde{N}_{\widehat G}(H)=\widetilde{N}_{\widehat G_2}(H)$ or $\widetilde{N}_{\widehat G}(H)=\widetilde{N}_{\widehat G_1}(H)$ and so the result follows from Corollary \ref{semidirect}.  

\smallskip
If (ii), (iv) or (v) holds then  either $\widetilde{N}_{\widehat G_1}\left(H\right) = 1$ or $\widetilde{N}_{\widehat G_2}\left(H\right) = 1$ so by Corollary \ref{semidirect}  we also have the result.

\smallskip
 If (iii) holds then  $\widetilde{N}_{\widehat G_2}\left(H\right) \triangleleft \widetilde{N}_{\widehat G}\left(H\right)$ and  we have the equality $\widetilde N_{\widehat G}(H)= \widetilde{N}_{\widehat G_2}(H)\widetilde{N}_{\widehat G_1}(H)$. So again     Corollary \ref{semidirect} imply the result.
\end{proof}
The following corollary is a generalization  of \cite[Theorem 4.1]{GZ}, where $G_i$ were assumed finite.

\begin{cor}
With the hypotheses of Theorem \ref{3 formula} if $\widetilde{Aut}_{\widehat{G_i}}(H)=\widetilde{Aut}_{G_i}(H)$ for each $i=1,2$ then $g(G,\mathcal{W})=1$.
\end{cor}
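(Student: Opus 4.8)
The plan is to invoke Theorem \ref{3 formula} directly and then simplify the resulting double-coset count. Since we are working under the hypotheses of Theorem \ref{3 formula} (in particular $G=G_1*_HG_2$ is a profinitely non-symmetric amalgamated free product of $\Oe$-groups with finite amalgamation and one of the conditions (i)--(v) holds), that theorem already supplies the exact formula
$$g(G,\mathcal{W})=|\widetilde{Aut}_{G_2}(H)\backslash \widetilde{Aut}_{\widehat G_2}(H)\, \widetilde{Aut}_{\widehat G_1}(H)/\widetilde{Aut}_{G_1}(H)|.$$
Feeding in the extra hypothesis $\widetilde{Aut}_{\widehat G_i}(H)=\widetilde{Aut}_{G_i}(H)$ for both $i=1,2$, the middle factor $\widetilde{Aut}_{\widehat G_2}(H)\,\widetilde{Aut}_{\widehat G_1}(H)$ becomes exactly the product $\widetilde{Aut}_{G_2}(H)\,\widetilde{Aut}_{G_1}(H)$ of the two subgroups that act on the left and on the right of the double-coset space.

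The only substantive step is then a trivial group-theoretic observation: for subgroups $A,B$ of any group, the product $AB$ coincides with the single double coset $A\cdot 1\cdot B$ containing the identity, so that $A\backslash AB/B$ is a one-point set. Applying this with $A=\widetilde{Aut}_{G_2}(H)$ and $B=\widetilde{Aut}_{G_1}(H)$ shows that the double-coset space in the displayed formula has cardinality exactly $1$, and hence $g(G,\mathcal{W})=1$.

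There is no real obstacle here; all the content is carried by Theorem \ref{3 formula}, and this corollary is precisely the degenerate case in which the profinite and abstract images of the $H$-invariant automorphism groups in $Out(H)$ agree, so the ``extra'' profinite automorphisms that could otherwise yield non-isomorphic members of the genus are absent. The single point to keep straight is that the hypothesis is needed for \emph{both} $i=1$ and $i=2$, so that both the left- and the right-hand subgroups collapse onto their abstract counterparts simultaneously; once this is in place, the single-double-coset identity finishes the argument. This recovers \cite[Theorem 4.1]{GZ} as the special case where $G_1,G_2$ are finite.
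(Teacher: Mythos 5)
Your proposal is correct and matches the paper's (implicit) argument: the corollary is stated without proof precisely because it is the immediate substitution of $\widetilde{Aut}_{\widehat G_i}(H)=\widetilde{Aut}_{G_i}(H)$ into the formula of Theorem \ref{3 formula}, after which the set $\widetilde{Aut}_{G_2}(H)\,\widetilde{Aut}_{G_1}(H)$ is a single double coset, giving $g(G,\mathcal{W})=1$. Your observation that the hypothesis is needed for both $i=1,2$ and that this recovers \cite[Theorem 4.1]{GZ} is also consistent with the paper.
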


\begin{rem}\label{denso non symmetric profinitely} Assume that  $\widetilde{Aut}_{\widehat G_i}(H)= \widetilde{Aut}_{G_i}(H)$.  
   Since $\widetilde{Aut}_{G_i}(H)\cap \widetilde N_{\widehat G}(H)=\widetilde N_{G_i}(H)$, one has $$\widetilde N_{\widehat G}(H)\widetilde{Aut}_{\widehat G_i}(H)/ 
 \widetilde{Aut}_{G_i}(H) = \widetilde N_{\widehat G}(H)\widetilde{Aut}_{ G_i}(H)/ 
 \widetilde{Aut}_{G_i}(H)= \widetilde N_{\widehat G}(H)/\widetilde N_{G_i}(H).$$  Then
   $(*)$   reduces to   the image of $N^+$ in
 $$\widetilde N_{ G_2}(H)\backslash \widetilde N_{\widehat G}(H)/\widetilde N_{G_1}(H).$$ In particular the genus $g(G, \W)$ in Theorem \ref{fixed subgroups} will be bounded by $$|\widetilde N_{ G_2}(H)\backslash \widetilde N_{\widehat G}(H)/\widetilde N_{G_1}(H)|.$$\end{rem}

\bigskip
If $G_i (i=1,2)$ are finite, then $\widetilde N_{\widehat G}(H)=\widetilde N_{G}(H)$  by Proposition \ref{cp1}. So we can deduce from  Remark \ref{denso non symmetric profinitely} a sharper estimate in this case.

\begin{cor}\label{dense estimate} If $G_i$ is finite for $i=1,2$ and $G=G_1*_{H}G_2$ is not symme\-tric (equivalently not profinitely symmetric)  then $$g(G, \W) \leq  |\widetilde N_{ G_2}(H)\backslash \widetilde N_{G}(H)/\widetilde N_{G_1}(H)|.$$
\end{cor}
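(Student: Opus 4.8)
The plan is to reduce immediately to Remark \ref{denso non symmetric profinitely} and then replace the profinite normalizer $\widetilde N_{\widehat G}(H)$ by the abstract one $\widetilde N_G(H)$. First I would observe that since $G_1$ and $G_2$ are finite, their profinite completions coincide with themselves, so $\widehat G_i = G_i$ and consequently $\widetilde{Aut}_{\widehat G_i}(H) = \widetilde{Aut}_{G_i}(H)$ for $i=1,2$. In particular the two notions of symmetric and profinitely symmetric agree, justifying the parenthetical equivalence in the statement. More importantly, this places us exactly in the hypotheses of Remark \ref{denso non symmetric profinitely}, which then gives that the genus of Theorem \ref{fixed subgroups} satisfies
$$g(G,\W)\leq |\widetilde N_{G_2}(H)\backslash \widetilde N_{\widehat G}(H)/\widetilde N_{G_1}(H)|.$$

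The remaining step, which I expect to be the only genuine point, is the identification $\widetilde N_{\widehat G}(H) = \widetilde N_G(H)$. By Proposition \ref{cp1} we have the abstract decomposition $N_G(H) = N_{G_1}(H)*_H N_{G_2}(H)$ and the profinite one $N_{\widehat G}(H) = N_{\widehat{G_1}}(H)\amalg_H N_{\widehat{G_2}}(H)$; using $\widehat G_i = G_i$ the latter reads $N_{\widehat G}(H) = N_{G_1}(H)\amalg_H N_{G_2}(H)$. Thus $N_{\widehat G}(H)$ is topologically generated by $N_{G_1}(H)$ and $N_{G_2}(H)$, exactly the two subgroups that generate $N_G(H)$. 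Passing to $Out(H)$, which is finite since $H$ is finite, the natural maps send these generating subgroups to the same subsets $\widetilde N_{G_1}(H),\widetilde N_{G_2}(H)$ of $Out(H)$. As topological generation inside a finite group is ordinary generation, both images equal the subgroup $\langle \widetilde N_{G_1}(H),\widetilde N_{G_2}(H)\rangle$, whence $\widetilde N_{\widehat G}(H)=\widetilde N_G(H)$.

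Substituting this equality into the bound furnished by Remark \ref{denso non symmetric profinitely} yields
$$g(G,\W)\leq |\widetilde N_{G_2}(H)\backslash \widetilde N_{G}(H)/\widetilde N_{G_1}(H)|,$$
as claimed. The only care needed is the passage from topological to abstract generation, which is harmless precisely because $Out(H)$ is finite; all other ingredients are direct invocations of Remark \ref{denso non symmetric profinitely} and Proposition \ref{cp1}.
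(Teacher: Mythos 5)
Your proof is correct and follows essentially the same route as the paper: the paper likewise deduces $\widetilde N_{\widehat G}(H)=\widetilde N_{G}(H)$ from Proposition \ref{cp1} (using that $\widehat G_i=G_i$ for finite $G_i$) and then invokes Remark \ref{denso non symmetric profinitely}. Your only addition is to spell out the (harmless) passage from topological to abstract generation inside the finite group $Out(H)$, a detail the paper leaves implicit.
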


\begin{rem} \label{case polycyclic}
Let $G_i$ $(i=1,2)$ be polycyclic-by-finite groups such that $\widetilde{Aut}_{\widehat G_i}(H)= \widetilde{Aut}_{G_i}(H)$ for each $i=1,2$. Let $G=G_1*_H G_2$ be a free product with finite amalgamation. Then $$g(G, \W) \leq  |\widetilde N_{ G_2}(H)\backslash \widetilde N_{G}(H)/\widetilde N_{G_1}(H)|$$ by Corollary \ref{8} and Remarks \ref{denso non symmetric profinitely}. 
\end{rem}



\subsection{Non-symmetric but profinitely symmetric case} \label{secao6.2}

 In this subsection we will consider the case where $G=G_1*_{H}G_2$ is non-symmetric but profinitely symmetric. So we  have an isomorphism $\gamma:\widehat G_1\rightarrow \widehat G_2$ such that $\gamma(H)=H$ but there is no isomorphism 
$\phi: G_1\rightarrow G_2$ such that $\phi(H)=H$. Put $\xi=  \gamma|_{H}\in Aut(H)$.


By Theorem \ref{fixed images out} we have the following natural  map 
$$\kappa: Out(H)\rightarrow \widetilde{Aut}_{G_2}(H)\backslash Out(H)/\widetilde{Aut}_{G_1}(H).$$ Let $\mathcal{K}=\widetilde{Aut}_{\widehat G_2}(H)\widetilde N^+\widetilde{Aut}_{\widehat G_1}(H)$ and $\tilde{\xi}$ be the image of $\xi \in Aut(H)$  in $Out(H)$.


For a subset $R$ of a group we shall use the notation $R^{-1}=\{r^{-1}\mid r\in R\}$.

\begin{lem} \label{equality S} With the notations and hypotheses of this subsection one has $$\\ \tilde\xi \mathcal{K}^{-1}\tilde\xi= \widetilde{Aut}_{\widehat G_2}(H)\tilde{\xi}\  (\widetilde{N}^+)^{-1}\ \tilde{\xi}\widetilde{Aut}_{\widehat G_1}(H).$$ 
\end{lem}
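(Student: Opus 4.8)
The plan is to prove the identity by a direct manipulation of the three sets involved, using only that $\widetilde{Aut}_{\widehat G_1}(H)$ and $\widetilde{Aut}_{\widehat G_2}(H)$ are subgroups of $Out(H)$ together with the intertwining relation supplied by Lemma \ref{equality}. Since every step will be an equality of subsets of $Out(H)$, no separate containment argument is needed.

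First I would record the two structural facts to be used. As $\widetilde{Aut}_{\widehat G_i}(H)$ is the image in $Out(H)$ of the group $Aut_{\widehat G_i}(H)$, it is a subgroup of $Out(H)$ and hence satisfies $\widetilde{Aut}_{\widehat G_i}(H)^{-1}=\widetilde{Aut}_{\widehat G_i}(H)$; by contrast $\widetilde N^+$ is merely a subset, so $(\widetilde N^+)^{-1}$ must be carried along symbolically. Next, since we are in the profinitely symmetric situation, Lemma \ref{equality} applied to $\gamma\colon\widehat G_1\to\widehat G_2$ gives $\widetilde{Aut}_{\widehat G_1}(H)=\widetilde{Aut}_{\widehat G_2}(H)^{\tilde\xi}=\tilde\xi^{-1}\widetilde{Aut}_{\widehat G_2}(H)\tilde\xi$, which I would rewrite as the commutation identity $\tilde\xi\,\widetilde{Aut}_{\widehat G_1}(H)=\widetilde{Aut}_{\widehat G_2}(H)\,\tilde\xi$. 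This relation, which lets $\tilde\xi$ be pushed past an $Aut$-factor at the cost of switching the index, is the engine of the whole computation.

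With these in hand the argument is a short calculation. Writing $\mathcal K=A\,N\,B$ with $A=\widetilde{Aut}_{\widehat G_2}(H)$, $N=\widetilde N^+$ and $B=\widetilde{Aut}_{\widehat G_1}(H)$, inverting each element and reversing the order of the factors gives $\mathcal K^{-1}=B^{-1}N^{-1}A^{-1}=\widetilde{Aut}_{\widehat G_1}(H)\,(\widetilde N^+)^{-1}\,\widetilde{Aut}_{\widehat G_2}(H)$, where the outer factors simplified because $A,B$ are subgroups. I would then conjugate by $\tilde\xi$ to obtain
$$\tilde\xi\,\mathcal K^{-1}\,\tilde\xi=\tilde\xi\,\widetilde{Aut}_{\widehat G_1}(H)\,(\widetilde N^+)^{-1}\,\widetilde{Aut}_{\widehat G_2}(H)\,\tilde\xi,$$
and apply the commutation identity twice: on the leftmost factor $\tilde\xi\,\widetilde{Aut}_{\widehat G_1}(H)=\widetilde{Aut}_{\widehat G_2}(H)\,\tilde\xi$, and on the rightmost factor $\widetilde{Aut}_{\widehat G_2}(H)\,\tilde\xi=\tilde\xi\,\widetilde{Aut}_{\widehat G_1}(H)$. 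These two substitutions land exactly on $\widetilde{Aut}_{\widehat G_2}(H)\,\tilde\xi\,(\widetilde N^+)^{-1}\,\tilde\xi\,\widetilde{Aut}_{\widehat G_1}(H)$, which is the claimed right-hand side.

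There is no genuine analytic or structural obstacle here; all the content is carried by the symmetry relation of Lemma \ref{equality}. The one point requiring care — and the step I would double-check — is the bookkeeping around the conjugation convention $x^g=g^{-1}xg$: one must verify that Lemma \ref{equality} is invoked with the correct orientation, so that multiplying on the left by $\tilde\xi$ converts an $\widehat G_1$-factor into an $\widehat G_2$-factor, and multiplying on the right does the reverse. Once the orientation is fixed the substitutions above are forced, and the proof is complete.
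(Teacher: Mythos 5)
Your proof is correct and follows essentially the same route as the paper's: both rest on Lemma \ref{equality} in the form $\tilde\xi\,\widetilde{Aut}_{\widehat G_1}(H)=\widetilde{Aut}_{\widehat G_2}(H)\,\tilde\xi$, applied after reversing the factors of $\mathcal K^{-1}$, the paper doing this element-wise by inserting $\tilde\xi^{-1}\tilde\xi$ around $\tilde w^{-1}$ while you do it at the level of sets. If anything, your set-theoretic phrasing (using that the two $\widetilde{Aut}$-factors are subgroups, hence closed under inversion) makes the asserted \emph{equality} immediate, whereas the paper's displayed computation literally ends in a membership statement and leaves the reverse inclusion to the bijectivity of conjugation by $\tilde\xi$.
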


\begin{proof}
 For $\tilde{f_i} \in \widetilde{Aut}_{\widehat G_i}(H)$, $i=1,2$ and  $\tilde{w} \in \widetilde N^+$  we have 
 $$\tilde\xi(\tilde{f_2} \tilde{w} \tilde{f_1})^{-1} \tilde\xi = \tilde\xi \tilde{f_1}^{-1} \tilde{w}^{-1} \tilde{f_2}^{-1}  \tilde\xi=$$ 
 $$= (\tilde\xi \tilde{f_1}^{-1} \tilde{\xi}^{-1})(\tilde{\xi}\tilde{w}^{-1}\tilde{\xi})(\tilde{\xi}^{-1} \tilde{f_2}^{-1}  \tilde\xi)\in \widetilde{Aut}_{\widehat G_2}(H)\tilde{\xi} (\widetilde{N}^+)^{-1}\tilde{\xi} \widetilde{Aut}_{\widehat G_1}(H)$$ 
by Lemma \ref{equality}   as needed.
\end{proof}


It follows from  Proposition \ref{genero} (ii), Lemma \ref{equality S}, Corollary \ref{fixed images isomorphisms number} and Theorem \ref{fixed images out} that in this case we need to restrict the map $$\kappa: Out(H)\rightarrow \widetilde{Aut}_{G_2}(H)\backslash Out(H)/\widetilde{Aut}_{G_1}(H)$$ to $$\mathcal{S}=\mathcal{K}\cup \tilde\xi \mathcal{K}^{-1}\tilde\xi.$$ 
As in the previous subsection we can think of $\mathcal{S}$ as a subset of $Out(H)$ on which $\widetilde{Aut}_{G_2}(H)$ acts on the left  and $\widetilde{Aut}_{G_1}(H)$ acts on the right and so $\kappa(\mathcal{S})$  can be written as $\widetilde{Aut}_{G_2}(H)\backslash \mathcal{S}/\widetilde{Aut}_{G_1}(H)$.

Hence in this case the formula for genus is as follows.

\begin{thm}\label{fixed subgroups prof non sym} Let $G=G_1*_{ H} G_2$ be an  amalgamated free product of $\Oe$-groups $G_1,G_2$ with common finite subgroup $H$. Suppose  $G$ is not symmetric but profinitely symmetric. Then the  genus $g(G,\W)$ equals
$$|\widetilde{Aut}_{G_2}(H)\backslash \mathcal{S}/\widetilde{Aut}_{G_1}(H)|. \eqno{(**)}$$
\end{thm}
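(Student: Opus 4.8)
The plan is to count the abstract isomorphism classes of groups $B=G_1*_{H,\mu}G_2$ ($\mu\in Aut(H)$) belonging to $\W$ whose profinite completion is $\widehat G$; by definition $g(G,\W)$ is exactly this number, since two finitely generated residually finite groups share all finite quotients precisely when their profinite completions are isomorphic. Parametrizing $\W$ by $\mu\in Aut(H)$ via $B_\mu=G_1*_{H,\mu}G_2$ (so $G=B_{id}$), the two ingredients I need are: which $\mu$ give $\widehat{B_\mu}\cong\widehat G$, and when two such $B_\mu$ are abstractly isomorphic.

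First I would pin down the profinite condition. Since $\widehat{B_\mu}=\widehat G_1\amalg_{H,\mu}\widehat G_2$ and $\widehat G=\widehat G_1\amalg_{H,id}\widehat G_2$, this is answered by Proposition \ref{genero} applied to the profinite $OE$-groups $\widehat G_1,\widehat G_2$ with base $\mu=id$. Profinite symmetry provides $\gamma:\widehat G_1\to\widehat G_2$ with $\gamma(H)=H$, so we are in case (ii): with $\alpha:=\mu$ and $\xi=\gamma|_H$, one gets $\widehat{B_\mu}\cong\widehat G$ iff $\alpha\in\overline{Aut}_{\widehat G_2}(H)\overline N^+\overline{Aut}_{\widehat G_1}(H)$ or $\xi\alpha^{-1}\xi$ lies in the same set. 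Factoring out $Inn(H)$ (which is contained in every $\widetilde{Aut}_{\widehat G_i}(H)$) and passing to $Out(H)$, the first alternative reads $\tilde\mu\in\K$, while the second reads $\tilde\mu\in\tilde\xi\K^{-1}\tilde\xi$, because $\tilde\xi\tilde\alpha^{-1}\tilde\xi\in\K$ is equivalent to $\tilde\alpha=\tilde\xi k^{-1}\tilde\xi$ for some $k\in\K$. Hence the admissible images in $Out(H)$ are precisely $\cS=\K\cup\tilde\xi\K^{-1}\tilde\xi$, i.e. $\cS=\{\tilde\mu\in Out(H)\mid \widehat{B_\mu}\cong\widehat G\}$.

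Next I would read off the abstract isomorphism classes. As $G$ is \emph{not} symmetric, there is no $\gamma:G_1\to G_2$ with $\gamma(H)=H$, so Proposition \ref{gamma}(i) (equivalently Corollary \ref{fixed images isomorphisms number} together with Theorem \ref{fixed images out}) applies with no swap of the factors: $B_{\mu_1}\cong B_{\mu_2}$ iff $\tilde\mu_1,\tilde\mu_2$ lie in the same fibre of the natural map $\kappa:Out(H)\to\widetilde{Aut}_{G_2}(H)\backslash Out(H)/\widetilde{Aut}_{G_1}(H)$. Thus the isomorphism classes in $\W$ with completion $\widehat G$ correspond bijectively to $\kappa(\cS)$, and it remains to count $\kappa(\cS)$ by $(**)$.

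The final step is to verify that $\cS$ is a union of full $(\widetilde{Aut}_{G_2}(H),\widetilde{Aut}_{G_1}(H))$-double cosets, which makes $\kappa(\cS)$ identify with $\widetilde{Aut}_{G_2}(H)\backslash\cS/\widetilde{Aut}_{G_1}(H)$. One clean way is the soft observation that abstract isomorphism forces profinite isomorphism, so membership $\tilde\mu\in\cS$ is constant on each abstract class, i.e. on each double coset; explicitly this is confirmed by Lemma \ref{equality S}, which rewrites $\tilde\xi\K^{-1}\tilde\xi=\widetilde{Aut}_{\widehat G_2}(H)\,\tilde\xi(\widetilde N^+)^{-1}\tilde\xi\,\widetilde{Aut}_{\widehat G_1}(H)$, so that both pieces of $\cS$ are stable under left multiplication by $\widetilde{Aut}_{\widehat G_2}(H)\supseteq\widetilde{Aut}_{G_2}(H)$ and right multiplication by $\widetilde{Aut}_{\widehat G_1}(H)\supseteq\widetilde{Aut}_{G_1}(H)$ (here I use Lemma \ref{equality} to conjugate $\widetilde{Aut}$-groups by $\tilde\xi$). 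This bi-invariance is the main obstacle to be careful about: the two pieces of $\cS$ are built from the profinite data ($\widehat{\phantom{G}}$-automorphisms and $N^+$), whereas the equivalence we quotient by uses only the smaller abstract automorphism groups, and the count $(**)$ is meaningful only once $\cS$ is genuinely stable under these smaller groups. With that settled, $g(G,\W)=|\kappa(\cS)|=|\widetilde{Aut}_{G_2}(H)\backslash\cS/\widetilde{Aut}_{G_1}(H)|$, which is $(**)$.
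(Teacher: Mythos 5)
Your proposal is correct and follows essentially the same route as the paper: Proposition \ref{genero}(ii) (with base $\mu=id$) to identify the admissible outer classes as $\cS=\K\cup\tilde\xi\K^{-1}\tilde\xi$, Proposition \ref{gamma}(i)/Theorem \ref{fixed images out} for the abstract isomorphism classes since $G$ is non-symmetric, and Lemma \ref{equality S} to see that $\kappa(\cS)$ is the double coset space $\widetilde{Aut}_{G_2}(H)\backslash\cS/\widetilde{Aut}_{G_1}(H)$. Your explicit verification that $\cS$ is a union of full $(\widetilde{Aut}_{G_2}(H),\widetilde{Aut}_{G_1}(H))$-double cosets is exactly the point the paper treats more tersely ("we can think of $\cS$ as a subset of $Out(H)$ on which these groups act"), so you have merely made the paper's own argument more careful, not changed it.
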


As in the preceding subsection if  $\widetilde{N}_{\widehat G}(H)=\widetilde{N}_{\widehat G_2}(H)\widetilde N_{\widehat G_1}(H)$ ( in which case  also $\widetilde{N}_{\widehat G}(H)=\widetilde{N}_{\widehat G_1}(H)\widetilde N_{\widehat G_2}(H)$ (see 1.3.13 in \cite{RO})) then $\widetilde{N}^{+}=\widetilde{N}_{\widehat G}(H)$, moreover,   $\widetilde{N}^{+}$ disappears from the formula for $\mathcal{K}$ and $\mathcal{S}$, i.e. in this case $\mathcal{K}=\widetilde{Aut}_{\widehat G_2}(H)\widetilde{Aut}_{\widehat G_1}(H)$ and
$\mathcal{S}=\widetilde{Aut}_{\widehat G_2}(H)\widetilde{Aut}_{\widehat G_1}(H)\cup \widetilde{Aut}_{\widehat G_2}(H)(\tilde{\xi})^{2}\widetilde{Aut}_{\widehat G_1}(H)$. Indeed,  
$$\tilde\xi(\widetilde{N}^{+})^{-1} \tilde\xi=\tilde\xi \widetilde{N}_{\widehat G}(H)\tilde\xi = \tilde\xi \widetilde{N}_{\widehat G_1}(H)\widetilde N_{\widehat G_2}(H) \tilde\xi=$$
 $$= (\tilde\xi \widetilde N_{\widehat G_1}(H) \tilde{\xi}^{-1})(\tilde{\xi}\tilde{\xi})(\tilde{\xi}^{-1} \widetilde N_{\widehat G_2}(H)  \tilde\xi)=\widetilde{N}_{\widehat G_2}(H) (\tilde{\xi})^{2} \widetilde N_{\widehat G_1}(H),$$ therefore
 \begin{equation} \label{Sprofsimmetric}
 \mathcal{S} =  \widetilde{Aut}_{\widehat G_2}(H) \widetilde{Aut}_{\widehat G_1}(H) \cup  \widetilde{Aut}_{\widehat G_2}(H)(\tilde{\xi})^{2} \widetilde{Aut}_{\widehat G_1}(H).
\end{equation}

We shall  use (\ref{Sprofsimmetric}) in the next subsections.

\subsubsection{Profinite double}\label{profinite double}

In this section we consider the case when $G=G_1*_HG_2$ is not symmetric but become a profinite double after profinite  completion  (see Proposition \ref{isomorphic to double} describing when it happens). In this case we can think of $\widehat G_2$ as isomorphic copy of $\widehat G_1$ with $G_1,G_2$ being dense subgroups of $\widehat G_1$;  so $\widehat G \cong D=\widehat G_1\amalg_H \widehat G_1$. 
 
Note that in this case we have $\widetilde{N}_{\widehat G}(H)=\widetilde{N}_{\widehat G_1}(H)$ by Proposition \ref{cp1} and moreover $\xi=id$. Then it follows from Equation (\ref{Sprofsimmetric}) and Lemma \ref{equality} that $\widetilde{N}^{+}$ disappears from the formula $\mathcal{K}$ and $\mathcal{S}$, i.e. in this case $$\mathcal{K}=\widetilde{Aut}_{\widehat G_1}(H)=\mathcal{S}.$$ Lemma \ref{equality} and Theorem \ref{fixed subgroups prof non sym}  give us the  following



\begin{thm}\label{genus 1 non symm but prof} Let $G=G_1*_{ H, \mu} G_2$ be a non-symmetric   amalgamated free product of $\Oe$-groups $G_1,G_2$ with common finite   subgroup $H$. Suppose  $\widehat G$ is isomorphic to a profinite double $D=\widehat G_1\amalg_H \widehat G_1$.  Then $$g(G,\W)= |\widetilde{Aut}_{G_2}(H)\backslash 
\widetilde{Aut}_{\widehat G_1}(H)/\widetilde{Aut}_{G_1}(H)|.$$ 
\end{thm}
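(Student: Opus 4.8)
The plan is to specialize the general formula $(**)$ from Theorem \ref{fixed subgroups prof non sym} to the profinite double situation. Since $G$ is non-symmetric but profinitely symmetric, Theorem \ref{fixed subgroups prof non sym} already gives $g(G,\W)=|\widetilde{Aut}_{G_2}(H)\backslash \mathcal{S}/\widetilde{Aut}_{G_1}(H)|$, where $\mathcal{S}=\mathcal{K}\cup \tilde\xi\mathcal{K}^{-1}\tilde\xi$ and $\mathcal{K}=\widetilde{Aut}_{\widehat G_2}(H)\widetilde N^+\widetilde{Aut}_{\widehat G_1}(H)$. The whole task therefore reduces to showing that in the double case this set $\mathcal{S}$ collapses to $\widetilde{Aut}_{\widehat G_1}(H)$. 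First I would record the structural consequences of $\widehat G\cong D=\widehat G_1\amalg_H\widehat G_1$: since both vertex groups are identified with $\widehat G_1$, we have $\widetilde{Aut}_{\widehat G_2}(H)=\widetilde{Aut}_{\widehat G_1}(H)$, and the isomorphism $\gamma$ may be taken to be the identity on $\widehat G_1$, so $\xi=id$ and hence $\tilde\xi=1$ in $Out(H)$.

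Next I would dispose of the factor $\widetilde N^+$. By Proposition \ref{cp1}(ii) we have $N_{\widehat G}(H)=N_{\widehat G_1}(H)\amalg_H N_{\widehat G_2}(H)$, and in the double both normalizer factors coincide, so $\widetilde N_{\widehat G}(H)=\widetilde N_{\widehat G_1}(H)$. In particular $\widetilde N_{\widehat G}(H)=\widetilde N_{\widehat G_2}(H)\widetilde N_{\widehat G_1}(H)$ holds trivially, which is exactly the hypothesis under which $\widetilde N^+$ disappears from the formulas for $\mathcal{K}$ and $\mathcal{S}$, as already derived in the discussion following Theorem \ref{fixed subgroups prof non sym} via equation $(\ref{Sprofsimmetric})$. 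Consequently $\mathcal{K}=\widetilde{Aut}_{\widehat G_2}(H)\widetilde{Aut}_{\widehat G_1}(H)=\widetilde{Aut}_{\widehat G_1}(H)$, using the identification of the two vertex automorphism images.

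It then remains to see that the symmetrizing term contributes nothing new. Feeding $\tilde\xi=1$ into $(\ref{Sprofsimmetric})$ gives
\begin{equation*}
\mathcal{S}=\widetilde{Aut}_{\widehat G_2}(H)\widetilde{Aut}_{\widehat G_1}(H)\cup \widetilde{Aut}_{\widehat G_2}(H)(\tilde\xi)^2\widetilde{Aut}_{\widehat G_1}(H)=\widetilde{Aut}_{\widehat G_1}(H),
\end{equation*}
since $(\tilde\xi)^2=1$ collapses the second piece into the first. Thus $\mathcal{K}=\mathcal{S}=\widetilde{Aut}_{\widehat G_1}(H)$, and substituting into $(**)$ yields
\begin{equation*}
g(G,\W)=|\widetilde{Aut}_{G_2}(H)\backslash \widetilde{Aut}_{\widehat G_1}(H)/\widetilde{Aut}_{G_1}(H)|,
\end{equation*}
which is the claimed formula. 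The one point deserving genuine care, rather than routine bookkeeping, is justifying that $\gamma$ can be chosen to act as the identity on the amalgamated $H$ so that $\xi=id$; this rests on Proposition \ref{isomorphic to double}, which supplies the isomorphism $\widehat G\cong \widehat G_1\amalg_H\widehat G_1$ realizing the double with $H$ amalgamated identically, and I would invoke Lemma \ref{equality} to transport automorphism data across $\gamma$ cleanly. Everything else is a direct specialization of the already-established machinery.
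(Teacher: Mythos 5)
Your proposal is correct and follows essentially the same route as the paper: the paper likewise specializes Theorem \ref{fixed subgroups prof non sym}, noting that in the double case $\widetilde N_{\widehat G}(H)=\widetilde N_{\widehat G_1}(H)$ by Proposition \ref{cp1}, that $\xi=id$, and then using Lemma \ref{equality} together with Equation (\ref{Sprofsimmetric}) to conclude $\mathcal{K}=\widetilde{Aut}_{\widehat G_1}(H)=\mathcal{S}$ before substituting into $(**)$. Even your flagged subtlety (choosing $\gamma$ with $\xi=id$, via Proposition \ref{isomorphic to double} and the identification of $\widehat G_2$ with $\widehat G_1$) is handled at exactly the same level of detail as in the paper's own discussion preceding the theorem.
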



\begin{cor} \label{usarnoexemplo}
With the  hypotheses of Theorem \ref{genus 1 non symm but prof} if $\widetilde{Aut}_{\widehat{G_1}}(H)=\widetilde{Aut}_{G_1}(H)$  then $g(G,\mathcal{W})=1$.
\end{cor}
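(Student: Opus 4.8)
The plan is to read the corollary off directly from the exact genus formula furnished by Theorem~\ref{genus 1 non symm but prof}. Under the hypotheses in force (non-symmetric but with $\widehat G$ a profinite double $D=\widehat G_1\amalg_H\widehat G_1$), that theorem already supplies
$$g(G,\W)=|\widetilde{Aut}_{G_2}(H)\backslash \widetilde{Aut}_{\widehat G_1}(H)/\widetilde{Aut}_{G_1}(H)|,$$
so the only remaining task is to evaluate this double-coset count under the extra assumption $\widetilde{Aut}_{\widehat G_1}(H)=\widetilde{Aut}_{G_1}(H)$. First I would record that the double-coset space is genuinely well defined: the right-acting group $\widetilde{Aut}_{G_1}(H)$ sits inside $\widetilde{Aut}_{\widehat G_1}(H)$ because every $H$-invariant automorphism of $G_1$ induces an $H$-invariant automorphism of $\widehat G_1$, and the left-acting group $\widetilde{Aut}_{G_2}(H)$ sits inside $\widetilde{Aut}_{\widehat G_2}(H)=\widetilde{Aut}_{\widehat G_1}(H)$ once we identify $\widehat G_2$ with the isomorphic copy $\widehat G_1$ of the double (this identification, with $G_1,G_2$ dense, is exactly the setup of Subsection~\ref{profinite double}).

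The key step is then purely formal. Substituting $\widetilde{Aut}_{\widehat G_1}(H)=\widetilde{Aut}_{G_1}(H)$ into the formula turns the ambient set into the right-acting subgroup itself, so that
$$g(G,\W)=|\widetilde{Aut}_{G_2}(H)\backslash \widetilde{Aut}_{G_1}(H)/\widetilde{Aut}_{G_1}(H)|.$$
Since $\widetilde{Aut}_{G_1}(H)$ is a single right coset of itself (it is the whole group), the right action of $\widetilde{Aut}_{G_1}(H)$ on $\widetilde{Aut}_{G_1}(H)$ is transitive, and hence so is the joint two-sided action. Therefore the double-coset space reduces to one point and $g(G,\W)=1$, which is the assertion.

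The main point to verify carefully — rather than a true obstacle — is the first step: confirming that the two parametrizing subgroups really are subgroups of $\widetilde{Aut}_{\widehat G_1}(H)$ so that the collapse is legitimate, and that the hypothesis $\widetilde{Aut}_{\widehat G_1}(H)=\widetilde{Aut}_{G_1}(H)$ is being applied to the correct factor (the one governing the profinite double, where by Proposition~\ref{cp1} one has $\widetilde N_{\widehat G}(H)=\widetilde N_{\widehat G_1}(H)$ and $\xi=\mathrm{id}$, so that $\widetilde N^+$ has already been eliminated in Theorem~\ref{genus 1 non symm but prof}). Once these identifications are in place, no further computation is needed and the conclusion is immediate.
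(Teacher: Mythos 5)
Your proof is correct and matches the paper's (implicit) argument: the paper states this corollary without proof precisely because, as you show, substituting $\widetilde{Aut}_{\widehat G_1}(H)=\widetilde{Aut}_{G_1}(H)$ into the formula of Theorem~\ref{genus 1 non symm but prof} makes the ambient set a single right coset of the right-acting subgroup, so the double-coset space collapses to one point. Your preliminary check that $\widetilde{Aut}_{G_1}(H)$ and $\widetilde{Aut}_{G_2}(H)$ indeed sit inside $\widetilde{Aut}_{\widehat G_1}(H)$ (via functoriality of completion and the identification $\widetilde{Aut}_{\widehat G_2}(H)=\widetilde{Aut}_{\widehat G_1}(H)$ coming from $\tilde\xi=\mathrm{id}$ in the double case) is a sensible verification that the paper leaves tacit.
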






We finish this subsection with an example of amalgamated free 
 product of virtually cyclic groups which is relevant by Remark \ref{finite-by-cyclic genus fixed amalgam}.

\begin{exa} Let $C$ be a cyclic group of odd order $m > 3$ and  $\psi:\Z\longrightarrow Aut(C)$ be a homomorphism. Let $ C\rtimes_\psi \Z$ be the corresponding semidirect product and consider $G_1=(C\rtimes_\psi \Z)\times C_2$. Let $n\in \Z$ such that $\psi(n) \neq \pm \psi(1)$ is 
 generator of $Im(\psi)$. Define the homomorphism $\varphi:\Z\longrightarrow Aut(C)$ by setting $\varphi(1)=\psi(n)$. Put $G_2=(C\rtimes_\varphi \Z)\times C_2$. Let $G=G_1*_C G_2$. Since $C$ is characteristic in $G_1$ and $G_2$ (also  in $\widehat G_1$ and $\widehat G_2$) we have $Aut_{G_i}(C)=Aut(G_i)$ and $Aut_{\widehat G_i}(C)=Aut(\widehat G_i)$. Moreover $Aut(C)=Out(C)=\mathbb{Z}_{m}^{*}$. Note that $G$ is not symmetric because $\psi(n)\neq \pm \psi(1)$ (see Corollary 2.7 in \cite{GZ}) and is profinitely symmetric by \cite[Proposition 2.8]{GZ}.  
We also claim that $\overline{Aut}_{G_i}(C)=\widetilde{Aut}_{G_i}(C)=Aut(C)$ (consequently 
$\overline{Aut}_{\widehat G_i}(C)=\widetilde{Aut}_{\widehat{G}_i}(C)=Aut(C)$). Indeed, let $\alpha$ be an automorphism of $C$. The map $\theta: G_i \longrightarrow G_i (i=1,2)$ defined by $\theta|_{C_2}=id$, $\theta|_{\Z}=id$ and $\theta|_{C}=\alpha$ is an automorphism of $G_i$ that lifts $\alpha$. Hence by Proposition \ref{isomorphic to double} $\widehat G$ is isomorphic to a profinite double of $\widehat G_1$ over $C$. Thus it follows from Corollary \ref{usarnoexemplo} that $$g(G, \W)=| Out(C) \backslash Out(C)/Out(C)|=1.$$ 
\end{exa}

\subsection{Symmetric case}

In this subsection we will consider the case where $G=G_1*_{H_1=H_2}G_2$ is symmetric (and so is profinitely symmetric).  Identifying $H_1$ and $H_2$ with $H$ we have an isomorphism $\gamma: G_1\rightarrow G_2$ such that $\gamma(H)=H$ and recall the notation $\xi=\gamma|_{H}$.



By Proposition \ref{genero} (ii) combined with Corollary \ref{symmetrical fixed images isomorphism number} and Corollary \ref{symmetrical fixed images isomorphism number2} we  just need in this case to restrict the natural map
$$\kappa: Out(H)\rightarrow \left(\widetilde{Aut}_{G_2}(H)\backslash Out(H)/\widetilde{Aut}_{G_1}(H)\right) /C_2$$ 
to $\mathcal{S}$ defined in the preceding subsection. Recall that the action of    $C_2=\left  \langle x \right\rangle$ on $\widetilde{Aut}_{G_2}(H)\backslash Out(H)/\widetilde{Aut}_{G_1}(H)$ defined in equation  (\ref{action2}) is

$$
x\cdot\left( \widetilde{Aut}_{G_2}(H) \widetilde \alpha \widetilde{Aut}_{G_1}(H)\right)=\widetilde{Aut}_{G_2}(H) \widetilde \xi \widetilde \alpha^{-1}\widetilde\xi \widetilde{Aut}_{G_1}(H), 
$$
for $\widetilde \alpha\in Out(H)$, where $\widetilde{\xi}$ is the image of the $\xi$ in $Out(H)$.
This action
leaves $\mathcal{S}$ invariant.  Indeed, clearly $x\cdot\K=\widetilde \xi \mathcal{K}^{-1} \widetilde \xi$. On the other hand,  let   $\widetilde{Aut}_{\widehat G_2}(H) \widetilde{\xi} (\widetilde{\zeta})^{-1} \widetilde \xi \widetilde{Aut}_{\widehat G_1}(H)   \in  \widetilde \xi \mathcal{K}^{-1} \widetilde \xi$ (see Lemma \ref{equality S}). We have \begin{equation*}
x. \left( \widetilde{Aut}_{\widehat G_2}(H)  \widetilde \xi (\widetilde \zeta)^{-1}\widetilde\xi   \widetilde{Aut}_{\widehat G_1}(H)\right)= \widetilde{Aut}_{\widehat G_2}(H) \widetilde \xi \cdot ( (\widetilde \xi)^{-1} \widetilde \zeta (\widetilde \xi)^{-1}    )  \cdot   \widetilde\xi \widetilde{Aut}_{\widehat G_1}(H)=
\end{equation*}

\begin{equation*}
=\widetilde{Aut}_{\widehat G_2}(H)  \widetilde{\zeta} \widetilde{Aut}_{\widehat G_1}(H)   \in  \mathcal{K} \subset \mathcal{S}=\mathcal{K}  \cup \widetilde \xi \mathcal{K}^{-1}\widetilde\xi.
\end{equation*}

  Therefore $\kappa(\mathcal{S})$ can be written as $(\widetilde{Aut}_{G_2}(H)\backslash \mathcal{S}/\widetilde{Aut}_{G_1}(H))/C_2$. Hence in this case the formula for genus is as follows

\begin{thm}\label{case symmetrical genus} Let $G=G_1*_{H, \mu} G_2$ be a symmetric free product of $\Oe$-groups with finite amalgamation. Then the  genus $g(G,\W)$ equals
$$\left|\left(\widetilde{Aut}_{G_2}(H)\backslash \mathcal{S}/\widetilde{Aut}_{G_1}(H)\right)/C_2\right|. \eqno{(***)}$$
\end{thm}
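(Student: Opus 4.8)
The plan is to reuse verbatim the profinite analysis of the profinitely symmetric case (Theorem \ref{fixed subgroups prof non sym}) and then fold in the single extra identification produced by the genuine abstract symmetry, which is exactly the $C_2$-action of (\ref{action2}). First I would normalize $\mu_1=id$ as in Subsection \ref{given subgroups} and parametrize the members of $\W$ by their gluing automorphism, $B_{\mu_2}=G_1*_{H,\mu_2}G_2$ with $\mu_2\in Aut(H)$; writing $\xi=\gamma|_H$ I note that a symmetric $G$ is \emph{a fortiori} profinitely symmetric, so both the profinite material of Section \ref{Profinite isomorphism problem} and the abstract material of Subsection \ref{given subgroups} are available.

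The genus is cut out by two independent conditions, and the point is that the profinite one is unchanged from Theorem \ref{fixed subgroups prof non sym} while the abstract one is strictly coarser. For the profinite condition $\widehat{B_{\mu_2}}\cong\widehat G$ I would invoke Proposition \ref{genero}(ii) with base $\widehat G=\widehat G_1\amalg_H\widehat G_2$ and $\alpha=\mu_2$: after passing to $Out(H)$ it reads $\widetilde{\mu_2}\in\mathcal{K}$ or $\tilde\xi\,\widetilde{\mu_2}^{-1}\tilde\xi\in\mathcal{K}$, and inverting the second alternative gives $\widetilde{\mu_2}\in\tilde\xi\mathcal{K}^{-1}\tilde\xi$; hence $\widehat{B_{\mu_2}}\cong\widehat G$ if and only if $\widetilde{\mu_2}\in\mathcal{S}=\mathcal{K}\cup\tilde\xi\mathcal{K}^{-1}\tilde\xi$, precisely as before. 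For the abstract condition I would quote Proposition \ref{gamma}(ii), or equivalently Corollary \ref{symmetrical fixed images isomorphism number2}: two members are abstractly isomorphic exactly when their images lie in a common $C_2$-orbit of double cosets in $\widetilde{Aut}_{G_2}(H)\backslash Out(H)/\widetilde{Aut}_{G_1}(H)$ under (\ref{action2}). So the abstract isomorphism classes are the fibres of $\kappa$ followed by the $C_2$-quotient, in contrast to the non-symmetric case where no $C_2$ appears.

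To conclude I would restrict $\kappa$ to $\mathcal{S}$ and pass to $C_2$-orbits, giving the genus as $\kappa(\mathcal{S})/C_2=(\widetilde{Aut}_{G_2}(H)\backslash\mathcal{S}/\widetilde{Aut}_{G_1}(H))/C_2$, which is $(***)$. The two facts that make this legitimate are that $\mathcal{S}$ is bi-invariant under $\widetilde{Aut}_{G_2}(H)$ on the left and $\widetilde{Aut}_{G_1}(H)$ on the right (already encoded in the double-coset notation of Theorem \ref{fixed subgroups prof non sym}) and that the $C_2$-action of (\ref{action2}) preserves the restricted set $\mathcal{S}$. The latter stability is the real crux and is the computation immediately preceding the statement: one uses Lemma \ref{equality S} to recognize $x\cdot\mathcal{K}=\tilde\xi\mathcal{K}^{-1}\tilde\xi$, and Lemma \ref{equality} to transport $\tilde\xi$ across the factors $\widetilde{Aut}_{\widehat G_i}(H)$ and check that $x$ sends $\tilde\xi\mathcal{K}^{-1}\tilde\xi$ back into $\mathcal{K}$. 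I expect the main obstacle to be purely this invariance bookkeeping---keeping the hidden subset $\widetilde N^+\subseteq\mathcal{K}$ and the conjugation $\tilde\xi(\cdot)\tilde\xi$ compatible---since once $\mathcal{S}$ is known to be both bi-invariant and $C_2$-invariant, counting $C_2$-orbits of $\kappa(\mathcal{S})$ tautologically returns the number of isomorphism classes in the genus.
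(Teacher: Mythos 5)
Your proposal is correct and follows essentially the same route as the paper: the paper's argument preceding Theorem \ref{case symmetrical genus} likewise combines Proposition \ref{genero} (ii) (giving membership of the gluing class in $\mathcal{S}=\mathcal{K}\cup\tilde\xi\mathcal{K}^{-1}\tilde\xi$ as the profinite criterion) with Corollaries \ref{symmetrical fixed images isomorphism number} and \ref{symmetrical fixed images isomorphism number2} (giving the $C_2$-quotient of double cosets as the abstract criterion), and then verifies via Lemma \ref{equality S} and Lemma \ref{equality} that the $C_2$-action of (\ref{action2}) preserves $\mathcal{S}$, exactly the invariance you single out as the crux. Nothing essential differs.
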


\begin{rem}
As in the preceding subsection if  $\widetilde{N}_{\widehat G}(H)=\widetilde{N}_{\widehat G_2}(H)\widetilde N_{\widehat G_1}(H)$ ( in which case  also $\widetilde{N}_{\widehat G}(H)=\widetilde{N}_{\widehat G_1}(H)\widetilde N_{\widehat G_2}(H)$) then 
$\widetilde{N}^{+}$ disappears from the formula of $\mathcal{K}$ and $\mathcal{S}$, i.e. in this case $\mathcal{K}=\widetilde{Aut}_{\widehat G_2}(H)\widetilde{Aut}_{\widehat G_1}(H)$ and $\mathcal{S}=\widetilde{Aut}_{\widehat G_2}(H)\widetilde{Aut}_{\widehat G_1}(H)\cup \widetilde{Aut}_{\widehat G_2}(H)(\tilde\xi)^2\widetilde{Aut}_{\widehat G_1}(H)$ (see formula (\ref{Sprofsimmetric})). \end{rem}

If $\widehat G$ is isomorphic to a profinite double $D=\widehat G_1\amalg_H\widehat G_1$ (cf. Subsection \ref{profinite double} and Proposition \ref{isomorphic to double}), the  formula for the genus $g(G, \W)$ can be simplified.


\begin{thm}\label{genus 1 symm symm} Let $G=G_1*_H G_2$ 
be a symmetric amalgamated free product of $\Oe$-groups $G_1,G_2$ with finite common subgroup $H$. Suppose that  $\widehat G$ is isomorphic to a profinite double $D=\widehat G_1\amalg_H\widehat G_1$. Then $$g(G,\W)= \left|\left(\widetilde{Aut}_{G_2}(H)\backslash  \widetilde{Aut}_{\widehat G_1}(H)/\widetilde{Aut}_{G_1}(H)\right)/C_2\right|,$$ where $C_2$ acts by inversion. 
\end{thm}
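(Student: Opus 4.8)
The plan is to start from the general symmetric formula $(***)$ of Theorem \ref{case symmetrical genus},
$$g(G,\W)=\left|\left(\widetilde{Aut}_{G_2}(H)\backslash \mathcal{S}/\widetilde{Aut}_{G_1}(H)\right)/C_2\right|,$$
and to show that under the profinite double hypothesis the set $\mathcal{S}$ collapses to the single subgroup $\widetilde{Aut}_{\widehat G_1}(H)$ while the twist in the $C_2$-action disappears, leaving plain inversion. This runs exactly parallel to Subsection \ref{profinite double} and Theorem \ref{genus 1 non symm but prof}, the only genuinely new ingredient being the presence of the $C_2$-quotient coming from symmetry.

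First I would record the structural consequences of $\widehat G\cong D=\widehat G_1\amalg_H\widehat G_1$. By Proposition \ref{isomorphic to double} this double structure is witnessed by a continuous isomorphism $\hat\gamma_0:\widehat G_1\to\widehat G_2$ with $\hat\gamma_0|_H=id$, whence $\widetilde{Aut}_{\widehat G_2}(H)=\widetilde{Aut}_{\widehat G_1}(H)$ and $\widetilde N_{\widehat G_2}(H)=\widetilde N_{\widehat G_1}(H)$. Combined with Proposition \ref{cp1}(ii), which gives $\widetilde N_{\widehat G}(H)=\langle\widetilde N_{\widehat G_1}(H),\widetilde N_{\widehat G_2}(H)\rangle$, this yields $\widetilde N_{\widehat G}(H)=\widetilde N_{\widehat G_1}(H)=\widetilde N_{\widehat G_2}(H)\widetilde N_{\widehat G_1}(H)$. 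Hence $\widetilde N^+$ disappears from $\mathcal{K}$ and $\mathcal{S}$ as explained in the remark following Theorem \ref{case symmetrical genus}, and formula \eqref{Sprofsimmetric} reduces to $\mathcal{K}=\widetilde{Aut}_{\widehat G_1}(H)$ and $\mathcal{S}=\widetilde{Aut}_{\widehat G_1}(H)\cup\widetilde{Aut}_{\widehat G_1}(H)(\tilde\xi)^2\widetilde{Aut}_{\widehat G_1}(H)$.

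The crux is then to prove $\tilde\xi\in\widetilde{Aut}_{\widehat G_1}(H)$, after which both terms of $\mathcal{S}$ coincide and $\mathcal{S}=\widetilde{Aut}_{\widehat G_1}(H)$. For this I would exploit the two isomorphisms simultaneously: the abstract symmetry $\gamma:G_1\to G_2$ extends to $\hat\gamma:\widehat G_1\to\widehat G_2$ with $\hat\gamma|_H=\xi$, so $\delta:=\hat\gamma_0^{-1}\hat\gamma$ is a continuous automorphism of $\widehat G_1$ preserving $H$ with $\delta|_H=\xi$; thus $\tilde\xi\in\widetilde{Aut}_{\widehat G_1}(H)$. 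With $\mathcal{S}=\widetilde{Aut}_{\widehat G_1}(H)$, formula $(***)$ becomes
$$g(G,\W)=\left|\left(\widetilde{Aut}_{G_2}(H)\backslash\widetilde{Aut}_{\widehat G_1}(H)/\widetilde{Aut}_{G_1}(H)\right)/C_2\right|.$$

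It remains to identify the $C_2$-action $\tilde\alpha\mapsto\tilde\xi\tilde\alpha^{-1}\tilde\xi$ of \eqref{action2} with inversion. Here I would invoke Lemma \ref{equality}, which gives $\widetilde{Aut}_{G_1}(H)=\widetilde{Aut}_{G_2}(H)^{\tilde\xi}$, together with $\tilde\xi\in\widetilde{Aut}_{\widehat G_1}(H)$: right translation by $\tilde\xi^{-1}$ sends $\widetilde{Aut}_{G_2}(H)\tilde\alpha\widetilde{Aut}_{G_1}(H)$ to $\widetilde{Aut}_{G_2}(H)\tilde\alpha\tilde\xi^{-1}\widetilde{Aut}_{G_2}(H)$, furnishing a bijection onto $\widetilde{Aut}_{G_2}(H)\backslash\widetilde{Aut}_{\widehat G_1}(H)/\widetilde{Aut}_{G_2}(H)$ that carries the twisted involution $\tilde\alpha\mapsto\tilde\xi\tilde\alpha^{-1}\tilde\xi$ to genuine inversion $\tilde\beta\mapsto\tilde\beta^{-1}$, as a short check with $\tilde\beta=\tilde\alpha\tilde\xi^{-1}$ shows. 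The main obstacle is precisely this last step: since $\gamma$ is only an abstract symmetry, $\widetilde{Aut}_{G_1}(H)$ and $\widetilde{Aut}_{G_2}(H)$ need not be equal but are merely $\tilde\xi$-conjugate, so one must verify that the reparametrization is well defined on double cosets and truly conjugates the twisted action to inversion. Passing to cardinalities of the $C_2$-quotients then yields the stated formula, with $C_2$ acting by inversion.
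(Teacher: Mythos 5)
Your proposal is correct and follows essentially the same route as the paper, whose entire proof is the one-line remark that it is ``similar to the proof of Theorem \ref{genus 1 non symm but prof}'': use the profinite double structure to force $\widetilde{N}_{\widehat G}(H)=\widetilde{N}_{\widehat G_1}(H)$, so that $\widetilde N^+$ disappears and $\mathcal{S}$ collapses to $\widetilde{Aut}_{\widehat G_1}(H)$, then plug into the symmetric genus formula $(***)$ of Theorem \ref{case symmetrical genus}. You in fact go beyond the paper by explicitly proving $\tilde\xi\in\widetilde{Aut}_{\widehat G_1}(H)$ (via $\delta=\hat\gamma_0^{-1}\hat\gamma$) and by conjugating the $\tilde\xi$-twisted $C_2$-action of (\ref{action2}) to plain inversion through right translation by $\tilde\xi^{-1}$ --- details the paper leaves implicit, and exactly what is needed for the phrase ``$C_2$ acts by inversion'' to be meaningful when $\widetilde{Aut}_{G_1}(H)$ and $\widetilde{Aut}_{G_2}(H)$ are only $\tilde\xi$-conjugate rather than equal.
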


\begin{proof} It is similar to
 the proof of Theorem \ref{genus 1 non symm but prof}. 
\end{proof}

\begin{cor}
With the  hypotheses of Theorem \ref{genus 1 symm symm} if $\widetilde{Aut}_{\widehat{G_1}}(H)=\widetilde{Aut}_{G_1}(H)$, then $g(G,\mathcal{W})=1$.
\end{cor}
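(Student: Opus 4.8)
The plan is to feed the extra hypothesis straight into the genus formula of Theorem \ref{genus 1 symm symm} and to check that the double--coset set appearing there collapses to a single point, so that its quotient by $C_2$ is again a single point and hence $g(G,\W)=1$. The whole argument is thus a short bookkeeping verification inside $Out(H)$, in the same spirit as Corollary \ref{usarnoexemplo} and the corollary following Theorem \ref{3 formula}.

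First I would assemble the three containments that make the double coset
$$\widetilde{Aut}_{G_2}(H)\backslash \widetilde{Aut}_{\widehat G_1}(H)/\widetilde{Aut}_{G_1}(H)$$
meaningful, all viewed inside $Out(H)$. Since the embedding $G_i\hookrightarrow \widehat G_i$ lets every $H$--invariant automorphism of $G_i$ extend continuously to an $H$--invariant automorphism of $\widehat G_i$, one has $\widetilde{Aut}_{G_i}(H)\subseteq \widetilde{Aut}_{\widehat G_i}(H)$ for $i=1,2$. Next, because $\widehat G$ is the profinite double $D=\widehat G_1\amalg_H \widehat G_1$, the two vertex groups are identified by an isomorphism restricting to the identity on $H$, i.e. $\xi=\mathrm{id}$ (cf. Subsection \ref{profinite double}); applying Lemma \ref{equality} with $\tilde\xi=\mathrm{id}$ gives $\widetilde{Aut}_{\widehat G_2}(H)=\widetilde{Aut}_{\widehat G_1}(H)$. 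Combining the two displays yields $\widetilde{Aut}_{G_2}(H)\subseteq \widetilde{Aut}_{\widehat G_2}(H)=\widetilde{Aut}_{\widehat G_1}(H)$, so the left action of $\widetilde{Aut}_{G_2}(H)$ indeed preserves the ambient set $\widetilde{Aut}_{\widehat G_1}(H)$.

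Now I would invoke the hypothesis $\widetilde{Aut}_{\widehat G_1}(H)=\widetilde{Aut}_{G_1}(H)$, which makes the right--acting group fill the entire ambient set. For any $s\in \widetilde{Aut}_{\widehat G_1}(H)$ we then have $s\,\widetilde{Aut}_{G_1}(H)=\widetilde{Aut}_{\widehat G_1}(H)$, and since $\widetilde{Aut}_{G_2}(H)\subseteq \widetilde{Aut}_{\widehat G_1}(H)$ the left multiplication keeps us inside, so every $(\widetilde{Aut}_{G_2}(H),\widetilde{Aut}_{G_1}(H))$--double coset equals all of $\widetilde{Aut}_{\widehat G_1}(H)$. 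Hence
$$\left|\widetilde{Aut}_{G_2}(H)\backslash \widetilde{Aut}_{\widehat G_1}(H)/\widetilde{Aut}_{G_1}(H)\right|=1,$$
and the $C_2$--action (by inversion) on a one--point set is trivial, so the quotient still has a single element. By Theorem \ref{genus 1 symm symm} this gives $g(G,\W)=1$.

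The argument is essentially routine; the one place demanding a little care — and the only genuine obstacle — is the containment $\widetilde{Aut}_{G_2}(H)\subseteq \widetilde{Aut}_{\widehat G_1}(H)$, where it is the \emph{profinite double} hypothesis (identifying $\widehat G_2$ with $\widehat G_1$ over $H$ via $\xi=\mathrm{id}$), rather than the mere symmetry of $G$, that is actually used. Everything else follows from substituting the hypothesis into formula $(\ast\ast\ast)$ and reading off that a full group is a single double coset.
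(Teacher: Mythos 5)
Your proof is correct and coincides with the paper's (implicit) argument: the corollary is stated without proof precisely because, once $\widetilde{Aut}_{\widehat{G}_1}(H)=\widetilde{Aut}_{G_1}(H)$, the right-hand factor in the double coset formula of Theorem \ref{genus 1 symm symm} acts transitively on the group $\widetilde{Aut}_{\widehat G_1}(H)$, so the double coset space and hence its $C_2$-quotient are singletons. Your preliminary checks (the containments $\widetilde{Aut}_{G_i}(H)\subseteq \widetilde{Aut}_{\widehat G_i}(H)$ and the identification $\widetilde{Aut}_{\widehat G_2}(H)=\widetilde{Aut}_{\widehat G_1}(H)$ via $\xi=\mathrm{id}$ coming from the profinite double structure) are exactly the facts the paper builds into the statement of that theorem in Subsection \ref{profinite double}, so they are sound, if already implicit.
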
 


\subsubsection{Double}

Let us now consider an important particular case when $G=G_1 *_{H} G_2$ is isomorphic to a double of $G_1$ (Proposition \ref{isomorphic to double} gives conditions when this is the case). Then we can view $G_1,G_2$ as one group $G_1$ and write $G \cong D=G_1 *_{H} G_1$.  Recall that by  Corollary \ref{symmetrical fixed images isomorphism number2}  the number of isomorphism classes of amalgamated free products is 
  
  $$\left| \left( \widetilde{Aut}_{G_1}(H)\backslash Out(H)/\widetilde{Aut}_{G_2}(H)\right)/C_2 \right|$$ in the symmetric case.   
For a double the formula $(***)$  simplifies, since $\widetilde N_{\widehat D}(H)=\widetilde N_{\widehat G_1}(H)$ in this case (see Corollary \ref{normalizer in Out(H)}), $\tilde{\xi}=id$ and $$\widetilde{Aut}_{\widehat G_1} (H)=\widetilde{Aut}_{\widehat G_2} (H), \widetilde{Aut}_{G_1} (H)=\widetilde{Aut}_{G_2} (H).$$ Thus the natural map  $$\kappa: Out(H)\longrightarrow \left( \widetilde{Aut}_{G_1}(H)\backslash Out(H)/\widetilde{Aut}_{G_1}(H) \right) /C_2$$  has to  be restricted to $\widetilde{Aut}_{\widehat G_1}(H)$ in the case of a double. Thus for a double we have the following formula for the genus.

\begin{thm}\label{genus  double} Let $G_1$ be an $\Oe$-group and $H$ be a finite subgroup of $G_1$.  
Let $D=G_1*_{H}G_1$ be the double. Then $$g(D,\W)=|(\widetilde{Aut}_{G_1} (H)\backslash  \widetilde{Aut}_{\widehat G_1} (H) / \widetilde{Aut}_{G_1} (H))/C_2|,$$
where $C_2$ acts by inversion.
\end{thm}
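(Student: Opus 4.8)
The plan is to derive this statement as the special case of the general symmetric formula $(***)$ of Theorem \ref{case symmetrical genus} in which the two factors coincide and the amalgamating isomorphism is the identity on $H$. First I would fix the identifications forced by the word ``double'': writing $D=G_1*_HG_1$ we may take $G_2=G_1$ and the isomorphism $\gamma\colon G_1\to G_2$ to be the identity of $G_1$, so that $\gamma|_H=id$ and hence $\tilde\xi=id$ in $Out(H)$. In particular $\widetilde{Aut}_{\widehat G_2}(H)=\widetilde{Aut}_{\widehat G_1}(H)$ and $\widetilde{Aut}_{G_2}(H)=\widetilde{Aut}_{G_1}(H)$, and by Proposition \ref{isomorphic to double} the completion $\widehat D$ is the profinite double $\widehat G_1\amalg_H\widehat G_1$, so we are genuinely inside the symmetric situation governed by Theorem \ref{case symmetrical genus}.

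The substantive step is to show that the set $\mathcal{S}$ appearing in $(***)$ collapses to the single subgroup $\widetilde{Aut}_{\widehat G_1}(H)$. By Corollary \ref{normalizer in Out(H)} combined with $\tilde\xi=id$ (or directly from Proposition \ref{cp1}) one obtains $\widetilde N_{\widehat D}(H)=\widetilde N_{\widehat G_1}(H)$. Since $N^+\subseteq N_{\widehat D}(H)$, the image satisfies $\widetilde N^+\subseteq\widetilde N_{\widehat D}(H)=\widetilde N_{\widehat G_1}(H)\subseteq\widetilde{Aut}_{\widehat G_1}(H)$, so the middle factor is absorbed and $\mathcal{K}=\widetilde{Aut}_{\widehat G_1}(H)\,\widetilde N^+\,\widetilde{Aut}_{\widehat G_1}(H)=\widetilde{Aut}_{\widehat G_1}(H)$, the outer two factors already forming a subgroup. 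Because $\tilde\xi=id$ and $\widetilde{Aut}_{\widehat G_1}(H)$ is a subgroup, $\tilde\xi\mathcal{K}^{-1}\tilde\xi=\mathcal{K}^{-1}=\widetilde{Aut}_{\widehat G_1}(H)=\mathcal{K}$, whence $\mathcal{S}=\mathcal{K}\cup\tilde\xi\mathcal{K}^{-1}\tilde\xi=\widetilde{Aut}_{\widehat G_1}(H)$. This is precisely the specialization of formula (\ref{Sprofsimmetric}) to a double, and it is the one place where care is needed.

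Finally I would substitute $\mathcal{S}=\widetilde{Aut}_{\widehat G_1}(H)$ into $(***)$ and identify the $C_2$-action. The action defined in (\ref{action2}) sends $\widetilde\alpha$ to $\tilde\xi\widetilde\alpha^{-1}\tilde\xi$, which with $\tilde\xi=id$ becomes $\widetilde\alpha\mapsto\widetilde\alpha^{-1}$, i.e. inversion; the computation preceding Theorem \ref{case symmetrical genus} already shows this action leaves $\mathcal{S}$ invariant. Therefore $\kappa(\mathcal{S})=(\widetilde{Aut}_{G_1}(H)\backslash\widetilde{Aut}_{\widehat G_1}(H)/\widetilde{Aut}_{G_1}(H))/C_2$, which is exactly the claimed formula. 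I do not anticipate a real obstacle: the entire argument is a bookkeeping simplification of Theorem \ref{case symmetrical genus}, and the only point that must be checked honestly is the collapse of $\mathcal{K}$, which rests squarely on the normalizer identity $\widetilde N_{\widehat D}(H)=\widetilde N_{\widehat G_1}(H)$ ensuring that $\widetilde N^+$ lands inside $\widetilde{Aut}_{\widehat G_1}(H)$ rather than in a strictly larger subset.
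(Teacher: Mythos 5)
Your proposal is correct and takes essentially the same route as the paper: the paper also obtains this theorem by specializing formula $(***)$ of Theorem \ref{case symmetrical genus} to the double, using $\widetilde N_{\widehat D}(H)=\widetilde N_{\widehat G_1}(H)$ (Corollary \ref{normalizer in Out(H)}) together with $\tilde\xi=id$ and $\widetilde{Aut}_{\widehat G_i}(H)=\widetilde{Aut}_{\widehat G_1}(H)$, $\widetilde{Aut}_{G_i}(H)=\widetilde{Aut}_{G_1}(H)$ to restrict $\kappa$ to $\widetilde{Aut}_{\widehat G_1}(H)$, with the $C_2$-action of (\ref{action2}) reducing to inversion. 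Your explicit verification that $\mathcal{K}=\widetilde{Aut}_{\widehat G_1}(H)\,\widetilde N^+\,\widetilde{Aut}_{\widehat G_1}(H)$ collapses to the subgroup $\widetilde{Aut}_{\widehat G_1}(H)$ is precisely the simplification the paper carries out (more tersely) in the paragraph preceding the theorem.
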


\begin{cor} \label{genero 1 finito} If $\widetilde{Aut}_{G_1}(H)=\widetilde{Aut}_{\widehat G_1}(H)$ then $g(D, \W)=1.$ In particular, this is the case if $G_1$ is finite. 

\end{cor}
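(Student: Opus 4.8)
The plan is to read the result off directly from the genus formula established in Theorem \ref{genus double}, namely
$$g(D,\W)=|(\widetilde{Aut}_{G_1} (H)\backslash  \widetilde{Aut}_{\widehat G_1} (H) / \widetilde{Aut}_{G_1} (H))/C_2|.$$
First I would impose the hypothesis $\widetilde{Aut}_{\widehat G_1}(H)=\widetilde{Aut}_{G_1}(H)$, so that the set being double-coset-decomposed coincides with both of the subgroups acting on it. The space $\widetilde{Aut}_{G_1}(H)\backslash \widetilde{Aut}_{G_1}(H)/\widetilde{Aut}_{G_1}(H)$ then has exactly one element, since a subgroup acting on itself by left and right translation is transitive. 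Passing to the $C_2$-quotient (by inversion) cannot increase cardinality, so the whole set is a singleton and $g(D,\W)=1$.

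For the ``in particular'' assertion I would observe that a finite group is canonically isomorphic to its own profinite completion, so when $G_1$ is finite we have $\widehat G_1\cong G_1$ and therefore $Aut_{\widehat G_1}(H)=Aut_{G_1}(H)$. Taking images in $Out(H)$ gives $\widetilde{Aut}_{\widehat G_1}(H)=\widetilde{Aut}_{G_1}(H)$, which is precisely the hypothesis of the first part; the conclusion $g(D,\W)=1$ then follows.

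There is no genuine obstacle in this argument: all the substantive work---computing $\overline N_{\widehat D}(H)=\overline N_{\widehat G_1}(H)$ via Corollary \ref{normalizer in Out(H)}, identifying $\widehat D$ as the profinite double, and reducing the genus to the displayed double-coset count---has already been carried out in the proof of Theorem \ref{genus double}. The corollary is a pure specialization of that formula, and the only point to verify is the elementary fact that the double-coset space of a group by itself on both sides is trivial, after which the $C_2$-quotient changes nothing.
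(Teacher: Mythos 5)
Your proof is correct and matches the paper's (implicit) argument: the corollary is stated as an immediate specialization of Theorem \ref{genus  double}, and your observation that the double-coset space of a group acting on itself on both sides is a singleton, unaffected by the $C_2$-quotient, together with the identification $\widehat G_1\cong G_1$ for finite $G_1$, is exactly the intended reasoning.
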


\begin{exa} Let $G_1= \langle r_1, c_1 | r_1^4=c_1^2=1, c_1r_1c_1^{-1}=r_1^{-1}\rangle \cong D_8$, $G_2=\langle r_2, c_2 | r_2^4=c_2^2=1, c_2r_2c_2^{-1}=r_2^{-1}\rangle \cong D_8$ and $H=\langle c_1, r_1^2\rangle \cong C_2 \times C_2$ be one of the Klein subgroups of $G_1$. Observe that any $G=G_1*_H G_2$ is a symmetric amalgamated free product. Let $\alpha$ be an automorphism of $H$ that applies $c_1$ to $c_1$ and $r_1^2$ to $c_1r_1^2.$ The double $D=G_1*_{H, id} G_2=G_1*_H G_1$ and $B=G_1*_{ H, \alpha} G_2$ are not isomorphic by Theorem \ref{6}. Note that $Aut(H)=Out(H)\cong S_3$. By Corollary \ref{symmetrical fixed images isomorphism number2} we have that the number of the isomorphism classes of amalgamated free products $G_1*_H G_2$ is equal to 2 since the two distinct orbits are given by the orbits  of the representatives  $id$ and $\alpha.$ However by Theorem \ref{1} we have that $\widehat D \not \cong \widehat B$. So the genus $g(G, \W)=g(D, \W)=g(B,\W)=1$.
\end{exa}


\begin{exa}
In the previous example, let $H=<r>$ be the only subgroup of order 4 of $G_1 \cong D_8 \cong G_2.$ In this case, any amalgamated free product $G=G_1 *_H G_2 $ is isomorphic to  a double by Proposition \ref{isomorphic to double} since $\overline{Aut}_{G_i}(H)=Aut(H) \cong C_2\,(i=1,2)$, which implies that $g(G, \W)=g(D, \W)=1$ by Corollary \ref{genero 1 finito}. If $H \cong  C_2$ the result is immediate since $Aut(H)$ is the trivial group. Therefore, for the amalgamated free products $G=D_8*_H D_8$ the genus $g(G, \W)$ is always equal to 1 regardless of the chosen amalgamated subgroup. 
\end{exa}

\begin{rem} \label{finite-by-cyclic genus fixed amalgam} It follows from Remarks \ref{finite-by-cyclic isomorphism problems}, \ref{finite-by-cyclic profinite isomorphism problems2} and \ref{finite-by-cyclic profinite isomorphism problems} that the results of Section \ref{secao6}  also hold if we assume that  $G_i$ ($i=1,2)$ are semidirect product $M_i\rtimes \Z$ such that $M_i\neq H$ and $|M_i|< \infty$ (respectively,  $M_i\rtimes \widehat{\Z}$ such that $M_i\neq H$ and $|M_i|< \infty$, in the profinite case).
\end{rem}

\section{Genus of a push-out}

Recall that $Aut(G_i)\,(i=1,2)$ acts naturally on the set $S_i$ of finite subgroups of $G_i$ and similarly $Aut(\widehat G_i)\,(i=1,2)$ acts naturally on the set $\widehat S_i$ of finite subgroups of $\widehat G_i$. Let $H$ be a common finite subgroup of $G_i\,(i=1,2).$ Denote by $Aut(G_i)H$ and $Aut(\widehat{G_i})H$ the orbits of $H$  with respect to these actions. We denote by $k_i=k_{G_i,H}$  the number of $Aut(G_i)$-orbits of $S_i\cap Aut(\widehat G_i)H$. Then   $Aut_{G_i}(H)$ and $Aut_{\widehat G_i}(H)$ are the stabilizers of  $H$ with respect to these actions. 

 Denote by $\F$ the family of all amalgamated free products $G_1*_H G_2$ of  $\Oe$-groups such that the embeddings of $H$ in $G_i \,(i=1,2)$ are not fixed. Note that $\W \subset \F$. Therefore to obtain the formula for the  genus of an amalgamated free product viewed as a push-out one just needs to sum the genera calculated in the previous section over $Aut(G_i)$-orbits of $S_i\cap Aut(\widehat G_i)H$.  Now  we can state the  theorems which gives the precise cardinality for the genus $\mathfrak g(G,\F)$, where $G=G_1*_H \,G_2$. 
This is a generalization of Theorem 4.10 \cite{GZ}. If one of the indices in the following summation is infinite or if any of the parcels is infinite, then we say that the genus is infinite.

\begin{thm}\label{genus amalgam} Let $G_1, G_2$ be  $\Oe$-groups and $G=G_1 *_H G_2$ be a  free product with finite amalgamation. Then  the genus  $g(G,\mathcal{F})$ of $G$  equals to  $g(G,\F)=\displaystyle\sum_{\{H_1,H_2\}} g(G_1*_{H_1=H_2}G_2, \W)$, where $\{H_1,H_2\}$ ranges over unordered pairs of representatives of $Aut(G_i)$-orbits of $S_i\cap Aut(\widehat G_i)H\,(i=1,2)$.

\end{thm}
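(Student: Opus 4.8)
The plan is to decompose the family $\F$ --- restricted to those members whose profinite completion is $\widehat G$ --- into the subfamilies $\W(G_1,G_2,H_1=H_2)$ studied in Section \ref{secao6}, one for each admissible choice of amalgamated subgroups, and then to add up the genera computed there. First I would record that every $B\in\F$ has the shape $B=G_1*_{K_1=K_2}G_2$ for finite subgroups $K_1\le G_1$, $K_2\le G_2$ identified by some isomorphism, so that $\widehat B=\widehat G_1\amalg_{K}\widehat G_2$ with $K\cong K_1\cong K_2$. By Proposition \ref{2a}, an abstract isomorphism $B\cong B'$ of two such amalgams carries $G_i$ to $G_i$ (up to interchanging the factors, and only when this is possible) and the amalgamated subgroup to the amalgamated subgroup; hence $K_i'=\beta_i(K_i)$ for suitable $\beta_i\in Aut(G_i)$. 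Consequently the unordered pair $\{Aut(G_1)K_1,Aut(G_2)K_2\}$ of $Aut(G_i)$-orbits is an isomorphism invariant of $B$, and after applying suitable $\beta_i\in Aut(G_i)$ we may replace $K_i$ by the fixed representative $H_i$ of its orbit. This exhibits each isomorphism class in $\F$ as lying in exactly one subfamily $\W(G_1,G_2,H_1=H_2)$, indexed by unordered pairs of orbit representatives.

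Next I would pin down which orbit pairs actually occur when $\widehat B\cong\widehat G$. Applying Proposition \ref{compart1} to an isomorphism $\widehat B=\widehat G_1\amalg_{K}\widehat G_2\cong\widehat G_1\amalg_{H}\widehat G_2=\widehat G$, we obtain (up to the factor swap) an isomorphism $\psi$ with $\psi(\widehat G_1)=\widehat G_1$, $\psi(\widehat G_2)=\widehat G_2^{\,r}$ and $\psi(K)=H$; moreover, exactly as in the proof of Theorem \ref{1} (using Proposition \ref{o421}), the conjugator may be taken to lie in $N_{\widehat G}(H)$. Restricting $\psi$ to $\widehat G_1$ yields an automorphism of $\widehat G_1$ sending $K_1$ to $H$, so $K_1\in Aut(\widehat G_1)H$; and composing $\psi|_{\widehat G_2}$ with $\tau_{r^{-1}}$ yields an automorphism of $\widehat G_2$ sending $K_2$ to $H^{\,r^{-1}}=H$, so $K_2\in Aut(\widehat G_2)H$. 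Since $K_i\le G_i$, this gives $K_i\in S_i\cap Aut(\widehat G_i)H$, which is precisely the index set in the statement.

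Conversely, for each admissible pair $(H_1,H_2)$ with $H_i\in S_i\cap Aut(\widehat G_i)H$ I would verify that the corresponding subfamily really meets the genus. Choosing $\phi_i\in Aut(\widehat G_i)$ with $\phi_i(H)=H_i$ and the twist $\mu=(\phi_2|_H)(\phi_1|_H)^{-1}\colon H_1\to H_2$, the pair $(\phi_1,\phi_2)$ induces an isomorphism $\widehat G_1\amalg_{H_1=H_2,\mu}\widehat G_2\cong\widehat G$, so that the abstract amalgam $G_1*_{H_1=H_2,\mu}G_2\in\W(G_1,G_2,H_1=H_2)$ has completion isomorphic to $\widehat G$. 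By definition, the number of isomorphism classes inside this subfamily whose completion is $\cong\widehat G$ is the genus $g(G_1*_{H_1=H_2}G_2,\W)$, which is exactly the quantity evaluated by the theorems of Section \ref{secao6}. Adding these contributions over the unordered pairs of orbit representatives then gives $g(G,\F)=\sum_{\{H_1,H_2\}}g(G_1*_{H_1=H_2}G_2,\W)$.

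The main obstacle will be the bookkeeping that makes this partition exact, rather than any single hard estimate. Concretely I must ensure that distinct unordered orbit pairs give genuinely disjoint subfamilies and that no isomorphism class is counted twice: this is where the \emph{unordered} prescription is essential, since the factor interchange permitted by Propositions \ref{2a} and \ref{compart1} (when $G_1\cong G_2$, resp.\ $\widehat G_1\cong\widehat G_2$) identifies $(H_1,H_2)$ with $(H_2,H_1)$, while the internal interchange of the two embeddings amalgamated into a single pair is already absorbed by the $C_2$-action built into the symmetric-case formulas (Theorems \ref{fixed subgroups prof non sym} and \ref{case symmetrical genus}). The other delicate point is the reduction in the necessity step that the amalgamated copy $K_2$, a priori only carried into a conjugate of $H$, can be moved onto $H$ itself by an automorphism of $\widehat G_2$; this is exactly what the $N_{\widehat G}(H)$-refinement of Theorem \ref{1} supplies.
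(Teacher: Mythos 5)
Your proposal is correct and takes essentially the same route as the paper: the paper offers no formal proof of this theorem beyond the remark that one "just needs to sum the genera calculated in the previous section" over the orbit pairs, and your argument is precisely a careful fleshing-out of that decomposition. Specifically, your three steps --- partitioning the isomorphism classes of $\F$ by unordered orbit pairs via Proposition \ref{2a}, pinning down the admissible pairs via Proposition \ref{compart1} together with the $N_{\widehat G}(H)$-adjustment from the proof of Theorem \ref{1}, and counting inside each subfamily by the formulas of Section \ref{secao6} --- are exactly the ingredients the paper's sketch relies on.
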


Note that we do not need to distinguish symmetric and non-symmetric cases here, since the profinite completion of non-symmetric free product with amalgamation can be symmetric. Of course, the formula for $$g(G_1*_{H_1=H_2}G_2, \,\W)$$ under the summation takes the symmetry into account.



\begin{rem} \label{nilpotent} 
 If   $Aut(\widehat G_i)$-orbit of $H$ is finite,
then  we have that the number of $Aut(G_i)$-orbits of $Aut(\widehat G_i)H$ is
$|Aut(G_i)\backslash Aut(\widehat G_i)/Aut_{\widehat G_i}(H)|$. Denote by $k_i=k_{G_i, H}$ the number of $Aut(G_i)$-orbits of $Aut(\widehat G_i)H\cap S_i$. So we have an estimate for this number given by 
$$k_i=k_{G_i,H}\leq |Aut(G_i)\backslash Aut(\widehat G_i)/Aut_{\widehat G_i}(H)|.$$
Furthermore, if $S_i=\widehat S_i$ then the inequality becomes an equality. This is true for example if $G_i$ is finitely generated nilpotent because $tor(G_i)$ is finite  and $tor(G_i)=tor(\widehat{G_i})$ (see Corollary 4.7.9 in \cite{GZ}). 
If in addition $Aut(G_i)$ is dense in $Aut(\widehat G_i)$, then $k_i=k_{G_i,H}=1$. 
\end{rem}

\begin{rem} \label{ki finitos}
If $k_i=1$ for each $i=1,2$ then it follows from Theorem \ref{genus amalgam} that $g(G, \F)=g(G, \W)$. This happens for example when  $G_i\,(i=1,2)$ are finite or $H$ is characteristic in $\widehat G_i\,(i=1,2)$. 
\end{rem} 


\begin{rem} Note that if $G_i$ has only finitely many conjugacy classes of finite subgroups (as in any arithmetic group for example) then  $k_i$ is finite.  
\end{rem}

  


From Corollary \ref{dense estimate} we deduce then

\begin{cor}\label{genus estimate finite groups} If  $G_1, G_2$ are finite and $G=G_1*_H G_2$ is profinitely non-symmetric, 
then $g(G, \F) \leq  |\widetilde N_{ G_2}(H)\backslash \widetilde N_{G}(H)/\widetilde N_{G_1}(H)|.$ 
\end{cor}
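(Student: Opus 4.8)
The plan is to obtain this as a short composition of two earlier results: the push-out genus formula of Theorem \ref{genus amalgam}, which expresses $g(G,\F)$ as a sum of genera $g(G_1*_{H_1=H_2}G_2,\W)$ over orbit representatives, and the estimate of Corollary \ref{dense estimate} for the genus within a fixed family. The crucial point is that the finiteness of $G_1$ and $G_2$ collapses the summation in Theorem \ref{genus amalgam} to a single term, so that $g(G,\F)=g(G,\W)$; once this is established, Corollary \ref{dense estimate} supplies precisely the asserted upper bound.

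First I would verify that the index set of the sum in Theorem \ref{genus amalgam} reduces to one unordered pair. Since $G_i$ is finite we have $\widehat G_i=G_i$, whence $Aut(\widehat G_i)=Aut(G_i)$ and $S_i=\widehat S_i$ is just the set of all subgroups of $G_i$. Consequently $S_i\cap Aut(\widehat G_i)H=Aut(G_i)H$ is a single $Aut(G_i)$-orbit, so $k_i=k_{G_i,H}=1$ for $i=1,2$; this is exactly the situation already recorded in Remark \ref{ki finitos}. Hence in the formula of Theorem \ref{genus amalgam} the pair $\{H_1,H_2\}$ ranges over a single class $\{H,H\}$, and the sum reduces to the lone summand $g(G_1*_{H=H}G_2,\W)=g(G,\W)$. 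This yields the identity $g(G,\F)=g(G,\W)$.

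It then remains to apply Corollary \ref{dense estimate}, whose hypotheses are met here: $G_1,G_2$ are finite, and for finite groups the standing hypothesis ``profinitely non-symmetric'' is literally the same as ``non-symmetric'', since $\widehat G_i=G_i$ and an isomorphism $G_1\to G_2$ fixing $H$ is the same datum as an isomorphism $\widehat G_1\to\widehat G_2$ fixing $H$. Corollary \ref{dense estimate} therefore gives
$$g(G,\W)\leq |\widetilde N_{G_2}(H)\backslash \widetilde N_{G}(H)/\widetilde N_{G_1}(H)|,$$
and combining this with the equality of the previous paragraph produces the desired inequality. I do not expect any serious obstacle: the argument is bookkeeping built on earlier results, and the only step that genuinely requires the finiteness assumption is the verification that $k_1=k_2=1$, which is what prevents distinct $Aut(G_i)$-orbits of isomorphic copies of $H$ from contributing extra summands and forces the collapse $g(G,\F)=g(G,\W)$.
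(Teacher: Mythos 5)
Your proposal is correct and follows essentially the same route as the paper: the paper deduces the corollary by combining Remark \ref{ki finitos} (finiteness of $G_i$ gives $k_1=k_2=1$, hence $g(G,\F)=g(G,\W)$ via Theorem \ref{genus amalgam}) with the bound of Corollary \ref{dense estimate}, which is precisely your argument. Your explicit verification that $S_i\cap Aut(\widehat G_i)H$ is a single $Aut(G_i)$-orbit and that non-symmetric coincides with profinitely non-symmetric for finite factors just spells out what the paper leaves implicit.
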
 

\begin{exa} Let $G_1=M\times H$ and $G_2=N\times H$ such that $M\not\cong N$ are residually finite finitely generated minimax soluble torsion free non-cyclic groups (for instance polycyclic) and $H$ is a finite  group.  Then $G_1,G_2$ are $\Oe$-groups (cf. Proposition \ref{virtually cyclic}), $\widehat G_1=\widehat M\times H$, $\widehat G_2=\widehat N\times H$ and $H$ is characteristic in $\widehat G_i$ for each $i=1,2$. So by Remark \ref{ki finitos} $k_i=1\,(i=1,2)$. Note  that $\overline{Aut}_{\widehat{G_1}}(H)=\overline{Aut}_{G_1}(H)=Aut(H)=\overline{Aut}_{\widehat G_2}(H)=\overline{Aut}_{G_2}(H)$ and $\widetilde N_{\widehat G_i}(H)=1\,(i=1,2),$ so $\widetilde{N}^{+} = \widetilde N_{\widehat G}(H)=1$ by Proposition \ref{cp1}. Note that $G=G_1*_{H} G_2$ is a non-symmetric  amalgamated free product. It follows from Theorem \ref{fixed subgroups}  that $g(G,\mathcal{F})=1$. 

\end{exa}

\begin{rem} \label{finite-by-cyclic genus} It follows from Remarks \ref{finite-by-cyclic isomorphism problems}, \ref{finite-by-cyclic profinite isomorphism problems2} and \ref{finite-by-cyclic profinite isomorphism problems} that the results of this  section  also hold if we assume that  $G_i$ ($i=1,2)$ are semidirect product $M_i\rtimes \Z$ such that $M_i\neq H$ and $|M_i|< \infty$ (respectively,  $M_i\rtimes \widehat{\Z}$ such that $M_i\neq H$ and $|M_i|< \infty$, in the profinite case).
\end{rem}

\begin{exa} Let $G_1=M\times H$ and $G_2=N\times H$ such that $M\cong N$ are residually finite finitely generated minimax soluble torsion free non-cyclic groups (for instance polycyclic) and $H$ is a finite  group.  Then $G_1,G_2$ are $\Oe$-groups (cf. Proposition \ref{virtually cyclic}), $\widehat G_1=\widehat M\times H$, $\widehat G_2=\widehat N\times H$ and $H$ is characteristic in $\widehat G_i$, $i=1,2$. So by Remark \ref{ki finitos} $k_i=1\,(i=1,2)$.  Note  that $\overline{Aut}_{\widehat{G_1}}(H)=\overline{Aut}_{G_1}(H)=Aut(H)=\overline{Aut}_{\widehat G_2}(H)=\overline{Aut}_{G_2}(H)$ and $\widetilde N_{\widehat G_i}(H)=1$, so $\widetilde{N}^{+} = \widetilde N_{\widehat G}(H)=1$ by Proposition \ref{cp1}. Note that $G=G_1*_{H,\mu} G_2$ is a symmetric amalgamated free product for any $\mu\in Aut(H)$. It follows from Theorem  \ref{case symmetrical genus} that $g(G,\mathcal{F})=1$. 

\end{exa}

\begin{exa} Let $P=\langle x_1, x_2\rangle$ be a  free nilpotent group of rank $2$ and $H\cong \Z/p \Z$ be  a group of odd prime order $p$. Note that $Aut(\Z/p \Z)=\langle c\rangle\cong C_{p-1}$ is a  cyclic group of order  $p-1$.   We look at $c$ as the unit of $\Z/p\Z$ and define   $G_i=H\rtimes_{\psi_i} P$ with $h^{x_j}= h^{c}$, for $i,j=1,2$ and note that they are $\Oe$-groups by Proposition \ref{virtually cyclic}.  
Note that $G=G_1*_H G_2$ is symmetric.  Since $H$ is characteristic in $\widehat G_i$, $k_1=1=k_2$ by Remark \ref{ki finitos}. Note that $\psi_i:P\longrightarrow Aut(H)\,(i=1,2)$ factors through the abelianization $P^{ab}$ of $P$ and denoting by $\bar\psi_i: P^{ab}\longrightarrow \psi_i(P)$ the corresponding natural factorization. Then we can consider the following semidirect products   $R_i=H\rtimes_{\bar\psi_i}P^{ab}\,(i=1,2)$. By \cite{A} $Aut(P)\longrightarrow Aut( P^{ab})$ is surjective and by  \cite{L} so is $Aut(\widehat P)\longrightarrow Aut(\widehat P^{ab})$. It follows that $\widetilde{Aut}_{ G_i}(H)=\widetilde{Aut}_{ R_i}(H)$.  By Lemma 2.1 \cite{Die}  $\overline{Aut}_{ R_i}(H)$ contains all automorphisms of $Aut(H)$ that lift to automorphisms of $R_i$ and in the proof of \cite[Lemma 3.20]{GZ} it is proved that every automorphism of $H$ lifts to an automorphism of $R_i,$ because $P$ is a quotient of a free group of rank $2.$ So we have $$\overline{Aut}_{ R_i}(H)=\overline{Aut}_{ G_i}(H)=\widetilde{Aut}_{ R_i}(H)=\widetilde{Aut}_{\widehat{G}_i}(H)=Aut(H)=Out(H).$$ It follows from Theorem  \ref{case symmetrical genus} that $g(G,\mathcal{F})=1$. 
\end{exa}


\begin{exa}\label{free by Cp} Let $H$ be a group of  order $3$ and $M$ a  $\Z H$-lattice of  $\Z$-rank $2$ with non-trivial action.   Let $G_1=M\rtimes_{\varphi} H$ be the corresponding semidirect product  and $D=G_1*_HG_1$ the double group. Then $G_1$ is $\Oe$-group (by Proposition \ref{virtually cyclic}) and has just two subgroups of order $3$ up to conjugation that can be swapped by an automorphism, so   $k_1=1=k_2$. 
Since $D$ is  a double, we  just need to calculate $\left|\left( \widetilde{Aut}_{G_1}(H)\backslash \widetilde{Aut}_{\widehat G_1}(H)/\widetilde{Aut}_{G_1}(H)\right)/C_2\right|$ by Theorem \ref{genus  double} and Remark \ref{ki finitos}. 
By Lemma 2.1 \cite{Die}  $\overline{Aut}_{ G_1}(H)$ contains  $\overline N_{Aut(M)}(H)$. Note that $Aut(M)\cong GL_2(\Z)$ that contains $S_3$ and a subgroup of order $3$ is unique up to conjugation in $GL_2(\Z)$. So $\widetilde{Aut}_{G_1}(H)=\widetilde{Aut}_{\widehat{G}_1}(H)=Aut(H)=Out(H)\cong C_2$.   
Thus by 
 Theorem \ref{genus double} $g(D,\F)=1$.
\end{exa}

\begin{exa} Let $G_1\cong GL_n(\f_p)\cong G_2$ such that $G_2=G_1^{op}$ is the opposite  group. Let $H$ be the Borel subgroup of the upper triangular matrices of $G_1$.  Note that the transpose map $\gamma: G_1 \longrightarrow G_2$ satisfying $\gamma(X)=X^T\,(X \in G_1)$  is an isomorphism. Let $H_2$ be the Borel subgroup of the lower triangular matrices of $G_2.$ Note that $\gamma(H)=H_2$. Define $\mu=\gamma|_H \in Iso(H, H_2)$, $G=G_1*_{H, \mu} G_2$ and identify $H$ with $H_2.$  We have that $G$ is symmetrical and by Remark \ref{ki finitos} $g(G, \F)=g(G, \W)$, furthermore, $H$ is self-normalizing in $G_1$ and $G_2$ so $\widetilde{N}_{G_1}(H)=\widetilde{N}_{G_2}(H)=\widetilde{N}_{G}(H)=1=\widetilde{N}^{+}$ (see Proposition \ref{cp1}). Therefore $\mathcal{K}=\widetilde{Aut}_{G_1}(H)$ and as $\xi=id$ we have $\mathcal{K}=\mathcal{S}=\widetilde{Aut}_{G_1}(H).$ It follows from  Theorem  \ref{case symmetrical genus} that $g(G,\W)=g(G, \F)=1$. If $D=G_1 *_{H} G_1$ is the double then by Corollary \ref{genero 1 finito} $g(D, \F)=1$.

\end{exa}



\begin{rem}\label{class A} Let $\A$ be the class of all accessible groups whose vertex groups of its JSJ-decomposition are $\widehat{OE}$-groups. Let $G=G_1*_H G_2$ a free product of $\Oe$ groups  with finite amalgamation. Let $B$ be a group in $\A$ such that $\widehat G\cong \widehat B$. Then
 by \cite[Theorem 1.1]{BPZ} $B=B_1*_K B_2$ with $\widehat G_i\cong \widehat B_i$, $i=1,2$ and $H\cong K$. Thus  $$g(G,\A)\leq \sum_{B_1\in \mathfrak g(G_1,\A),B_2\in \mathfrak g(G_2,\A)} g(B,\F),$$ where $B_1$ and $B_2$ ranges over representatives of isomorphism classes in $\mathfrak g(G_1,\A)$, $\mathfrak g(G_2,\A)$ respectively. 
 \end{rem}

\nocite{*}

\end{document}